\begin{document}

\newtheorem{thm}{Theorem}[section]
\newtheorem{cor}[thm]{Corollary}
\newtheorem{lem}[thm]{Lemma}
\newtheorem{prop}[thm]{Proposition}
\newtheorem{obs}[thm]{Observation}
\theoremstyle{remark}
\newtheorem{rem}{Remark}[section]
\newtheorem{notation}{Notation}
\newtheorem{exc}[thm]{Exercise}
\newtheorem{exe}[thm]{Exercise}
\newtheorem{problem}[thm]{Problem}

\theoremstyle{definition}
\newtheorem{defn}{Definition}[section]

\theoremstyle{plain}
\newtheorem*{thmA}{Main Theorem (physical version)}
\newtheorem*{thmB}{Main Theorem (dynamical version)}
\newtheorem*{thmC}{Theorem (Global Lee-Yang-Fisher Current)}
\newtheorem*{thmRIG}{Local Rigidity Theorem}
\newtheorem*{LY Theorem}{Lee-Yang Theorem}
\newtheorem*{LY Theorem2}{General Lee-Yang Theorem}
\newtheorem*{LY TheoremBC}{Lee-Yang Theorem with Boundary Conditions}
\newtheorem*{LY Theorem2BC}{General Lee-Yang Theorem with Boundary Conditions}
\newtheorem*{conj}{Conjecture}

\numberwithin{equation}{section}
\numberwithin{figure}{section}

\newcommand{\figref}[1]{Figure~\ref{#1}}
\newcommand{\Rmig}{R} %the Migdal-Kadinoff renormalization in U,V,W coords.
\newcommand{\Cmig}{C} %the invariant ``cylinder'' for \Rmig
\newcommand{\Cmigbl}{C_0}
\newcommand{\TOPmig}{\mathrm{T}} %the invariant ``cylinder'' for \Rmig
\newcommand{\BOTTOMmig}{\mathrm{B}} %the invariant ``cylinder'' for \Rmig
%fixed points:
\newcommand{\FIXmig}{b} %the fixed points on the invariant ``cylinder'' for \Rmig
\newcommand{\CFIXmig}{e} %the complex attacting fixed points for \Rmig
%indeterminant points:
\newcommand{\INDmig}{a} %the indeterminant points for \Rmig
\newcommand{\Smig}{S} %the principal locus for \Rmig, i.e. the line U+2V+W=0

\newcommand{\thin} {{\mathrm{thin}}}
\newcommand{\thick} {{ \mathrm{thick}}}
\newcommand{\loc}{{\mathrm {loc}}}
\newcommand{\hor}{{\mathrm {hor}}}
\newcommand{\ver}{{\mathrm {ver}}}
\newcommand{\ess}{{\mathrm{ess}}}
\newcommand{\ness}{{\mathrm{ne}}}
\newcommand{\pro}{{\mathrm{pr}}}

\newcommand{\locflux} {\WW^\perp_\loc}

\newcommand{\re}{{\mathrm{(r)}}}
\newcommand{\essre}{{\mathrm{ess(r)}}}
\newcommand{\nere}{{\mathrm{ne(r)}}}
\newcommand{\verre}{{\mathrm{ver(r)}}}

\newcommand{\Cyl}{{\mathrm{Cyl}}}
\newcommand{\hsubset}{{ \,\underset{h}{\subset}\, }    }
\newcommand{\hsupset}{{ \, \underset{h}{\supset} \, }     }

\newcommand{\lp}{{\,\mathrm {dominates}\,}}

\newcommand{\Rphys}{\mathcal R} %the Physical renormalization in Z,T,S coords.
\newcommand{\Cphys}{\mathcal C} %the invariant cylinder for \Rphys
\newcommand{\Solid}{\mathcal{SC}}
\newcommand{\Solidmig}{SC}
\newcommand{\Secphys}{\Pi}
\newcommand{\Secmig}{P}
\newcommand{\TOPphys}{\mathcal T} %Top the invariant cylinder for \Rphys
\newcommand{\Cphystl}{{\mathcal C}_1}
\newcommand{\Cphysbl}{{\mathcal C}_0}
\newcommand{\Cphyslow}{{\mathcal C}_*}
\newcommand{\BOTTOMphys}{\mathcal B} %Top the invariant cylinder for \Rphys
%fixed points:
\newcommand{\FIXphys}{\beta} %the fixed points on the invariant cylinder for \Rphys
\newcommand{\CFIXphys}{\eta} %the complex attacting fixed points for \Rphys
%indeterminant points:
\newcommand{\INDphys}{\alpha} %the indeterminant points for \Rmig
\newcommand{\Sphys}{\mathcal S} %the principal locus for \Rphys, i.e. the curve line z^2+2zt+1=0
\newcommand{\Icurve}{\mathcal G} %image of the indeterminacy pts
\newcommand{\Imig}{G} 
\newcommand{\PI}{{\mathcal A}}
\newcommand{\crittemps}{{\mathscr C}} %The set of all critical tempuratures
\newcommand{\epoints}{{\mathscr E}} %The set of all endpoints of high-tempurature hairs

\newcommand{\Line}{L}
\newcommand{\Lzero}{L_0}
\newcommand{\Lone}{L_1}
\newcommand{\Ltwo}{L_2}
\newcommand{\Lthree}{L_3}
\newcommand{\Lfour}{L_4}

\newcommand{\LLzero}{\LL_0}
\newcommand{\LLone}{\LL_1}

\newcommand{\Div}{{\rm Div}}
\newcommand{\Tongue}{\Upsilon}

\newcommand{\Par}{{{\mathcal P}}}
\newcommand{\inv}{{\iota}}

\newcommand{\leg}{{\mathrm {leg}}}
\newcommand{\il}{{\mathrm {il}}}

\newcommand{\Mconv}{{\rm M}}
\newcommand{\Hconv}{{\rm H}}
\newcommand{\tconv}{{\rm t}}

\newcommand{\Weight}{W}

\newcommand{\KSQRT}{\widehat \KK}

\newcommand{\ex}{{\mathrm{exc}}}
\newcommand{\out}{{\mathrm{out}}}

\newcommand{\vertbondpp}{\, \underset{\oplus}{\overset{\oplus}{|}} \, }
\newcommand{\vertbondpm}{\, \underset{\ominus}{\overset{\oplus}{|}} \,}
\newcommand{\vertbondmp}{\, \underset{\oplus}{\overset{\ominus}{|}} \, }
\newcommand{\vertbondmm}{\, \underset{\ominus}{\overset{\ominus}{|}} \, }

\font\nt=cmr7

\def\note#1
{\marginpar
{\nt $\leftarrow$
\par
\hfuzz=20pt \hbadness=9000 \hyphenpenalty=-100 \exhyphenpenalty=-100
\pretolerance=-1 \tolerance=9999 \doublehyphendemerits=-100000
\finalhyphendemerits=-100000 \baselineskip=6pt
#1}\hfuzz=1pt}

\def\note#1{}

 \newcommand{\bignote}[1]{\begin{quote} \sf #1 \end{quote}}

 \long\def\bignote#1{}

% \centerline{NEW MACROS (2019) } 

\newcommand{\lo}{{\mathrm{lo}}}
\newcommand{\lobr}{{\mathrm{lo-br}}}
\newcommand{\sh}{{\mathrm{sh}}}
\newcommand{\sei}{{\mathrm{si}}}
\newcommand{\pe}{{\mathrm{pe}}}

\newcommand{\ol}{\overline}
\newcommand{\ul}{\underline}

\newcommand{\pp}{{\mathfrak{p}}}
\newcommand{\qq}{{\mathfrak{q}}}
\newcommand{\bb}{{\mathfrak{b}}}

\newcommand{\Kfilled}{{\mathcal {K} }}
\newcommand{\Jul}{{\mathcal{J}}}
\newcommand{\Sec} {{{S}}}
\newcommand{\Zec} {{{Z}}}
\newcommand{\Hub}{\TT}

% \centerline{OLD MACROS continued} 

%Commands specific to the renormalization operators:

\newcommand{\QED}{\rlap{$\sqcup$}$\sqcap$\smallskip}

\def\sss{\subsubsection}

\newcommand{\correspond}{\Psi}
\newcommand{\conjugacy}{\psi}

\newcommand{\rank}{\rm rank}
\newcommand{\di}{\partial}
\newcommand{\dibar}{\bar\partial}
\newcommand{\hookra}{\hookrightarrow}
\newcommand{\ra}{\rightarrow}
\newcommand{\hra}{\hookrightarrow}
\newcommand{\imply}{\Rightarrow}
\def\lra{\longrightarrow}
\newcommand{\wc}{\underset{w}{\to}}
\newcommand{\tu}{\textup}

\def\ssk{\smallskip}
\def\msk{\medskip}
\def\bsk{\bigskip}
\def\noi{\noindent}
\def\nin{\noindent}
\def\lqq{\lq\lq}
\def\sm{\setminus}
\def\bolshe{\succ}
\def\ssm{\smallsetminus}
\def\tr{{\text{tr}}}
\def\Crit{{\mathrm{Crit}}}

\newcommand{\ctg}{\operatorname{ctg}}
\newcommand{\diam}{\operatorname{diam}}
\newcommand{\dist}{\operatorname{dist}}
\newcommand{\Hdist}{\operatorname{H-dist}}
\newcommand{\cl}{\operatorname{cl}}
\newcommand{\inter}{\operatorname{int}}
\renewcommand{\mod}{\operatorname{mod}}
\newcommand{\card}{\operatorname{card}}
\newcommand{\tl}{\tilde}
\newcommand{\ind}{ \operatorname{ind} }
\newcommand{\Dist}{\operatorname{Dist}}
\newcommand{\Graph}{\operatorname{Graph}}
\newcommand{\len}{\operatorname{\l}}
\newcommand{\vol}{\operatorname{vol}}

\renewcommand{\Re}{\operatorname{Re}}
\renewcommand{\Im}{\operatorname{Im}}

\newcommand{\orb}{\operatorname{orb}}
\newcommand{\HD}{\operatorname{HD}}
\newcommand{\supp}{\operatorname{supp}}
\newcommand{\id}{\operatorname{id}}
\newcommand{\length}{\operatorname{length}}
\newcommand{\dens}{\operatorname{dens}}
\newcommand{\meas}{\operatorname{meas}}
\newcommand{\area}{\operatorname{area}}
\newcommand{\per}{\operatorname{per}}
\renewcommand{\Im}{\operatorname{Im}}

\renewcommand{\d}{{\diamond}}

\newcommand{\lef}{{\mathrm{left}}}
\newcommand{\righ}{{\mathrm{right}}}

\newcommand{\Dil}{\operatorname{Dil}}
\newcommand{\Ker}{\operatorname{Ker}}
\newcommand{\tg}{\operatorname{tg}}
\newcommand{\codim}{\operatorname{codim}}
\newcommand{\isom}{\approx}
\newcommand{\comp}{\circ}
\newcommand{\esssup}{\operatorname{ess-sup}}
\newcommand{\Rat}{{\mathrm{Rat}}}
\newcommand{\hot}{{\mathrm{h.o.t.}}}
\newcommand{\Conf}{{\mathrm{Conf}}}

\newcommand{\SLa}{\underset{\La}{\Subset}}

\newcommand{\const}{\mathrm{const}}
\def\loc{{\mathrm{loc}}}
\def\fib{{\mathrm{fib}}}
\def \br{{\mathrm{br}}}

\newcommand{\eps}{{\epsilon}}
\newcommand{\epsi}{{\epsilon}}
\newcommand{\veps}{{\varepsilon}}

\newcommand{\Ga}{{\Gamma}}
\newcommand{\De}{{\Delta}}
\newcommand{\de}{{\delta}}
\newcommand{\la}{{\lambda}}
\newcommand{\La}{{\Lambda}}
\newcommand{\si}{{\sigma}}
\newcommand{\Si}{{\Sigma}}
\newcommand{\Om}{{\Omega}}
\newcommand{\om}{{\omega}}
\newcommand{\Ups}{{\Upsilon}}

\newcommand{\al}{{\alpha}}
\newcommand{\ba}{{\mbox{\boldmath$\alpha$} }}
\newcommand{\be}{{\beta}}
\newcommand{\bbe}{{\mbox{\boldmath$\beta$} }}
\newcommand{\bk}{{\boldsymbol{\kappa}}}
\newcommand{\bg}{{\boldsymbol{\gamma}}}

\newcommand{\bare}{{\bar\eps}}

\newcommand{\Ray}{{\mathcal R}}
\newcommand{\Eq}{{\mathcal E}}
\newcommand{\PR}{PR}

\newcommand{\AAA}{{\mathcal A}}
\newcommand{\BB}{{\mathcal B}}
\newcommand{\CC}{{\mathcal C}}
\newcommand{\DD}{{\mathcal D}}
\newcommand{\EE}{{\mathcal E}}
\newcommand{\EEE}{{\mathcal O}}
\newcommand{\II}{{\mathcal I}}
\newcommand{\FF}{{\mathcal F}}
\newcommand{\GG}{{\mathcal G}}
\newcommand{\JJ}{{\mathcal J}}
\newcommand{\HH}{{\mathcal H}}
\newcommand{\KK}{{\mathcal K}}
\newcommand{\LL}{{\mathcal L}}
\newcommand{\MM}{{\mathcal M}}
\newcommand{\NN}{{\mathcal N}}
\newcommand{\OO}{{\mathcal O}}
\newcommand{\PP}{{\mathcal P}}
\newcommand{\QQ}{{\mathcal Q}}
\newcommand{\QM}{{\mathcal QM}}
\newcommand{\QP}{{\mathcal QP}}
\newcommand{\QL}{{\mathcal Q}}

\newcommand{\RR}{{\mathcal R}}
\newcommand{\SSS}{{\mathcal S}}
\newcommand{\TT}{{\mathcal T}}
\newcommand{\TTT}{{\mathcal P}}
\newcommand{\UU}{{\mathcal U}}
\newcommand{\VV}{{\mathcal V}}
\newcommand{\WW}{{\mathcal W}}
\newcommand{\XX}{{\mathcal X}}
\newcommand{\YY}{{\mathcal Y}}
\newcommand{\ZZ}{{\mathcal Z}}

\newcommand{\AS}{{\mathcal{AS}}}
\newcommand{\SAS}{{\mathcal{SAS}}}

\newcommand{\A}{{\Bbb A}}
\newcommand{\BBB}{{\Bbb B}}
\newcommand{\C}{{\Bbb C}}
\newcommand{\bC}{{\bar{\Bbb C}}}
\newcommand{\D}{{\Bbb D}}
\newcommand{\Hyp}{{\Bbb H}}
\newcommand{\J}{{\Bbb J}}
\newcommand{\Ll}{{\Bbb L}}
\renewcommand{\L}{{\Bbb L}}
\newcommand{\M}{{\Bbb M}}
\newcommand{\N}{{\Bbb N}}
\newcommand{\Q}{{\Bbb Q}}
\newcommand{\R}{{\Bbb R}}
\newcommand{\T}{{\Bbb T}}
\newcommand{\V}{{\Bbb V}}
\newcommand{\U}{{\Bbb U}}
\newcommand{\W}{{\Bbb W}}
\newcommand{\X}{{\Bbb X}}
\newcommand{\Z}{{\Bbb Z}}

\newcommand{\tT}{{\mathrm{T}}}
\newcommand{\tD}{{D}}
\newcommand{\hyp}{{\mathrm{hyp}}}
\newcommand{\cusp}{{\mathrm{cusp}}}

\newcommand{\fix}{{b}}
\newcommand{\cxfix}{{\xi}}
\newcommand{\LINV}{L_{\rm inv}}
\newcommand{\LLINV}{{\mathcal L}_{\rm inv}}
\newcommand{\f}{{\bf f}}
\newcommand{\g}{{\bf g}}
\newcommand{\h}{{\bf h}}
\renewcommand{\i}{{\bar i}}
\renewcommand{\j}{{\bar j}}

\newcommand{\geoD}{{\boldsymbol{\gamma}}}

\newcommand{\Bf}{{\mathbf{f}}}
\newcommand{\Bg}{{\mathbf{g}}}
\newcommand{\Bh}{{\mathbf{h}}}
\newcommand{\Bi}{{\mathbf{i}}}
\def\BJ{{\mathbf{J}}}
\def\Bl{{\mathbf{l}}}
\def\Bm{{\mathbf{m}}}
\def\Bn{{\mathbf{n}}}

\def\Bj{{\mathbf{j}}}
\def\BJ{{\mathbf{J}}}
\def\Bphi{{\mathbf{\Phi}}}
\def\Bpsi{{\mathbf{\Psi}}}
\newcommand\Bom{\boldsymbol{\om}}

\newcommand{\BD}{{\boldsymbol{D}}}
\newcommand{\BE}{{\boldsymbol{E}}}
\newcommand{\BF}{{\boldsymbol{F}}}
\newcommand{\BG}{{\mathbf{G}}}
\def\BH{{\mathbf{H}}}
\newcommand{\BI}{{\boldsymbol{I}}}
\newcommand{\BK}{\mathbf{K}}
\newcommand{\BL}{\mathbf{L}}
\def\B0{{\mathbf{0}}}
\newcommand{\BP}{{\boldsymbol{P}}}
\newcommand{\BS}{{\boldsymbol{S}}}
\def\BT{{\mathbf{T}}}
\newcommand{\BW}{{\mathbf{W}}}
\newcommand{\BV}{{\mathbf{V}}}
\newcommand{\BOM}{{\boldsymbol{\Om}}}

\newcommand{\BPi}{{\boldsymbol{\Pi}}}
\def\BUps{{\boldsymbol{\Upsilon}}}
\def\BLa{{\boldsymbol{\La}}}
\def\BGa{{\boldsymbol\Gamma}}
\def\BDe{{\boldsymbol\Delta}}
\def\BUps{{\boldsymbol\Upsilon}}
\def\BThe{{\boldsymbol\Theta}}
\def\BOm{{\boldsymbol \Om}}
\def\BPsi{{\boldsymbol\Psi}}

\def\Baleph{{\boldsymbol\aleph}}

\newcommand{\Bw}{{\mathbf{w}}}
\newcommand{\Bal}{{\boldsymbol{\alpha}}}
\newcommand{\Bde}{{\boldsymbol{\delta}}}
\newcommand{\Bga}{{\boldsymbol{\gamma}}}
\newcommand{\Bsi}{{\boldsymbol{\sigma}}}
\newcommand{\Bla}{{\boldsymbol{\lambda}}}
\def\Be{\mathbf{e}}
\def\Dia{{\Diamond}}

\def\SB{{\boldsymbol{\BB}}}

\def\ext{{\mathrm{ex}}}
\def\mouth{\operatorname{mouth}}
\def\tail{\operatorname{tail}}

\newcommand{\Comb}{{\it Comb}}
\newcommand{\Top}{{\operatorname{Top}}}
\newcommand{\Bottom}{{\operatorname{Bot}}}
\newcommand{\QC}{\mathcal QC}
\newcommand{\Def}{\mathcal Def}
\newcommand{\Teich}{\mathcal Teich}
\newcommand{\PPL}{{\mathcal P}{\mathcal L}}
\newcommand{\Jac}{\operatorname{Jac}}
\newcommand{\Homeo}{\operatorname{Homeo}}
\newcommand{\AC}{\operatorname{AC}}
\newcommand{\Dom}{\operatorname{Dom}}
\newcommand{\ord}{\operatorname{ord}}

\newcommand{\Hol}{{\rm Hol}}

\newcommand{\Aff}{\operatorname{Aff}}
\newcommand{\Euc}{\operatorname{Euc}}
\newcommand{\MobC}{\operatorname{M\ddot{o}b}({\Bbb C}) }
\newcommand{\PSL}{ {\mathcal{PSL}} }
\newcommand{\SL}{ {\mathcal{SL}} }
\newcommand{\CP}{ {\Bbb{CP}}   }

\newcommand{\hf}{{\hat f}}
\newcommand{\hz}{{\hat z}}
\newcommand{\hM}{{\hat M}}

\renewcommand{\lq}{``}
\renewcommand{\rq}{''}

\newcommand{\Ch}{\textrm{Ch}}
% \newcommand{\Ch}{{\textrm{\cyr CH} }}

%&&&&&&&&&&    Content   &&&&&&&&

\catcode`\@=12

\def\Empty{}
\newcommand\oplabel[1]{
  \def\OpArg{#1} \ifx \OpArg\Empty {} \else
  	\label{#1}
  \fi}
		
%%%%%%%%%%%%%%%%%%%%%%%%%%%%%%%%%%%%%%%%%%%%%%%%%%%%%%%%%%%%%%%%%%%%%
% Insert a postscript figure using psfig.
% Usage:	\realfig{label}{filename}{caption}
%
% uses psfig macros: must have \input{psfig} in the preamble to use
% it. 
%%%%%%%%%%%%%%%%%%%%%%%%%%%%%%%%%%%%%%%%%%%%%%%%%%%%%%%%%%%%%%%%%%%%%

\long\def\realfig#1#2#3#4{
\begin{figure}[htbp]
%%%\centerline{\psfig{figure=#3,height=#2}}
\centerline{\psfig{figure=#2,width=#4}}
\caption[#1]{#3}
\oplabel{#1}
\end{figure}}

%&&&&&&&&&&&&       List of figures              &&&&&&&&&
%
%&&&&&&&&&&&&&&&&&&&&&&&&&&&&&&&&&&&&&&&&&&&&&&&&&&&&&

\newcommand{\comm}[1]{}
\newcommand{\comment}[1]{}

\title[Elephant Eyes]{A priori bounds for some\\ infinitely renormalizable quadratics:\\
  {\tiny IV. Elephant Eyes } }

\author {Jeremy Kahn and Misha Lyubich}

\bigskip\bigskip

%\thanks{
%   This work was supported in part by Sloan Research Fellowship
% and NSF grants DMS-8920768 and DMS-9022140.}
\date{}

\begin{abstract} 
  In this paper we prove {\it a priori} bounds for  an ``elephant eye'' combinatorics.
  Little $M$-copies specifying these
  combinatorics are allowed to converge to the cusp of the Mandelbrot set.
  To handle it, we develope a new geometric tool:
  uniform thin-thick decompositions for bordered Riemann surfaces.   
\end{abstract}

\setcounter{tocdepth}{1}
 
\maketitle
\tableofcontents

\section {Preamble}

The main result of this paper was obtained by the authors around 2010
and has been announced at a number of conferences,
beginning with the conference
``Geometric and Algebraic Structures in Mathematics''
(Stony Brook 2011):

\msk
https://www.math.stonybrook.edu/Videos/dfest/video.php?f=48-Kahn
\msk

Here we reproduce the version of the article from  Jan 2020.

\section{Introduction}

The MLC Conjecture asserting that the Mandelbrot set $M$  is locally connected is a
central long-standing problem in contemporary Holomorphic Dynamics.
A major breakthrough in the problem occured around 1990
when Yoccoz proved local connectivity of $M$ at any parameter which is not infinitely renormalizable.

Since then, many infinitely renormalizable parameters has been taken care of,
see \cite{puzzle,K,decorations,molecules}.
However, for all of them so far, the renormalization types allowed
belong to finitely many limbs of $M$. In this paper, we cover an unbounded situation,
where the combinatorial types belongs to arbitrary  $1/\qq$-limbs of $\MM$
with renormalization periods $p= \qq+O(1)$.

To this end, we develope a new geometric tool:
uniform thin-thick decompositions for bordered Riemann surfaces,
which has a potential to  find many other applications.   

\section{Canonical laminations}

\subsection{Ideally marked hyperbolic plane}

\sss{Quadrilaterals}
  Let $\BPi$ be a  standard rectangle with horizontal sides $\BI$ and
  $\BJ$. It supports  the vertical foliation $\FF$. We let 
$$
   \WW(\BI, \BJ) =  \WW(\BPi)  = \WW(\FF) 
$$
be its conformal {\it width}, which is also equal to the width of the 
{\it full path family} in $\Pi$ connecting $\BI$ to $\BJ$.  

\begin{rem}
  We will sometimes use notation $\WW\{ \BI, \BJ\}$ to emphasize that the
  pair of intervales $\BI$ and $\BJ$ is non-ordered. 
\end{rem}

Let $\Bga$ and $\Bde$ be the hyperbolic geodesics in $\Pi$ sharing
the endpoints with $\BI$ and $\BJ$ respectively. We let $d_\hyp\equiv d_\Pi$ be the
hyperbolic distance in $\BPi$, and $l_\hyp\equiv l_\BPi$ be the
 hyperbolic length  (similar notation will be used for other
hyperbolic Riemann surfaces). 

\begin{lem}\label{geod in quadr}
    Let $\eps>0$ and let $\Bga^\eps=\{ z\in\Bga: \ \dist_\hyp (z, \Bde)
    < \eps\} $. Then 
$$
       l_\hyp (\Bga^\eps) = \WW(\BPi) + O_\eps (1).
$$
\end{lem}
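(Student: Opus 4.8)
The plan is to uniformize $\BPi$ and reduce the assertion to an explicit computation in the upper half-plane $\Hyp$. For bounded $\WW(\BPi)$ both sides of the claimed identity are bounded, so the only content is the regime $\WW(\BPi)\to\infty$, and it suffices to prove $l_\hyp(\Bga^\eps)=\WW(\BPi)+O_\eps(1)$ uniformly there. By conformal invariance we may take $\BPi$ to be a Euclidean rectangle of height $1$ with $\BI,\BJ$ its bottom and top sides, so that $\WW(\BPi)$ is the aspect ratio $\mod(\BPi)$ up to the normalization constant of the conformal width (recovered below). Uniformizing onto $\Hyp$ by a conformal (Schwarz--Christoffel) map and normalizing the four corners to $-1/k,-1,1,1/k$ with $0<k<1$, one has $k\to1$ as $\mod(\BPi)\to\infty$; in these coordinates $\BI$ corresponds to $[-1,1]$ and $\BJ$ to its complement $[1/k,+\infty]\cup[-\infty,-1/k]$, so that
$$
\Bga=\{|\zeta|=1\}\cap\Hyp,\qquad \Bde=\{|\zeta|=1/k\}\cap\Hyp
$$
are two disjoint concentric geodesics whose common perpendicular is the positive imaginary axis.

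Next I compute two distances. Since $\Bde=\tfrac1k\Bga$, measuring along the imaginary axis gives at once $\Delta:=d_\hyp(\Bga,\Bde)=\log(1/k)\to0$. For a point $e^{i\theta}\in\Bga$, compose the isometry $\zeta\mapsto k\zeta$ (carrying $\Bde$ to $\Bga$) with $\zeta\mapsto\frac{1+\zeta}{1-\zeta}$ (carrying $\Bga$ to the imaginary axis) and use the elementary identity $\dist_\hyp(\xi,i\R_{>0})=\log\cot(\tfrac12\arg\xi)$ for $\xi\in\Hyp$; this yields the closed form
$$
\dist_\hyp(e^{i\theta},\Bde)=\log\cot\!\left(\tfrac12\arctan\tfrac{k\sin\theta}{1+k\cos\theta}+\tfrac12\arctan\tfrac{k\sin\theta}{1-k\cos\theta}\right),
$$
which, as $k\to1$, expands to $\dist_\hyp(e^{i\theta},\Bde)=\dfrac{\Delta}{\sin\theta}\,(1+o(1))$ uniformly on compacta of $(0,\pi)$. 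Hence $\Bga^\eps=\{e^{i\theta}:\theta_1<\theta<\pi-\theta_1\}$, where $\theta_1$ is characterized by $\dist_\hyp(e^{i\theta_1},\Bde)=\eps$; solving the closed form shows $\theta_1=c(\eps)\,\Delta\,(1+o(1))$ for an explicit constant $c(\eps)>0$ (with $c(\eps)\sim1/\eps$ as $\eps\to0$).

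Now parametrize $\Bga$ by $\theta$, so that $ds_\hyp=d\theta/\sin\theta$, and compute
$$
l_\hyp(\Bga^\eps)=\int_{\theta_1}^{\pi-\theta_1}\frac{d\theta}{\sin\theta}=2\log\cot\frac{\theta_1}{2}=2\log\frac1{\theta_1}+O(1)=2\log\frac1\Delta+O_\eps(1).
$$
It remains to identify $2\log(1/\Delta)$ with $\WW(\BPi)$ up to $O(1)$. This is the classical asymptotic relating the modulus of a quadrilateral to the distance between its two opposite ``long'' geodesics: from the explicit map (equivalently, using $K(k)=\log(4/k')+o(1)$, $K(k')=\tfrac{\pi}{2}+o(1)$ with $k'=\sqrt{1-k^2}$, and $k'^2\sim2\Delta$) one gets $\mod(\BPi)=\tfrac{2}{\pi}\log(1/\Delta)+O(1)$, i.e. $2\log(1/\Delta)=\pi\,\mod(\BPi)+O(1)$; since the conformal width is normalized so that $\WW(\BPi)=\pi\,\mod(\BPi)$, this gives $l_\hyp(\Bga^\eps)=\WW(\BPi)+O_\eps(1)$.

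The calculations are elementary, but the estimate on $\theta_1$ must be genuinely uniform, and this is the crux. A \emph{relative} error $o(1)$ in $\theta_1$ is exactly what is needed, because $\frac{d}{d\theta_1}\bigl(2\log\cot\tfrac{\theta_1}{2}\bigr)=-\tfrac1{\sin\theta_1}\asymp-\tfrac1{\theta_1}$ converts a relative error in $\theta_1$ into an absolute error in $l_\hyp(\Bga^\eps)$, so no soft argument will produce the clean $+O_\eps(1)$; one must control the $o(1)$ in the expansion of $\dist_\hyp(e^{i\theta},\Bde)$ uniformly as $\WW(\BPi)\to\infty$, including down to scales $\theta\asymp\Delta$ (where $\Delta/\sin\theta$ is only the order of magnitude of the distance and the exact formula has to be used). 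The only other point is the bookkeeping of the normalization constant in the last step, which is what makes the statement a sharp equality ``$=\WW(\BPi)$'' rather than merely ``$\asymp\WW(\BPi)$''.
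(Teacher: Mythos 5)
The paper states Lemma~\ref{geod in quadr} without any proof, so there is no argument of the authors to compare yours against; your uniformization computation supplies a correct one. All the individual steps check out: with the corners normalized to $\pm1,\pm1/k$ the two geodesics are the concentric semicircles $|\zeta|=1$ and $|\zeta|=1/k$, your closed form for $\dist_\hyp(e^{i\theta},\Bde)$ is right (it simplifies to $\log\cot\bigl(\tfrac12\arctan\tfrac{2k\sin\theta}{1-k^2}\bigr)$, from which the monotonicity in $\theta$ and the exact value $\sin\theta_1=\tfrac{1-k^2}{2k}\,t_\eps$ with $t_\eps=\tan(2\,\mathrm{arccot}\,e^{\eps})$ follow, confirming $\theta_1=c(\eps)\Delta(1+o(1))$), the integral $\int d\theta/\sin\theta$ gives $2\log\cot(\theta_1/2)$, and the elliptic-integral asymptotics $2K(k)/K(k')=\tfrac{2}{\pi}\log(1/\Delta)+O(1)$ are standard. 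You also correctly identify where uniformity is needed (relative control of $\theta_1$ down to scale $\Delta$), and the reduction to the degenerate regime is harmless since for bounded width both sides are $O_\eps(1)$.

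The one point to flag is the final normalization, which you assert rather than derive: you declare $\WW(\BPi)=\pi\operatorname{mod}(\BPi)$, which is indeed the only convention under which the lemma is literally true as printed, but it is in tension with the paper's own later usage. In the proof of Theorem~\ref{thin-thick for S} the authors write $\WW_k=\tfrac1\pi\, l_\hyp(\gamma_k)$ and, citing Lemma~\ref{geod in quadr}, $\WW(\alpha)=\tfrac1\pi\, l_\hyp(\Bga_{ki}^\eps)+O(1)$; that corresponds to the usual Kahn--Lyubich convention $\WW=1/\lambda$ (reciprocal extremal length), under which your computation proves the sharp statement in the form $l_\hyp(\Bga^\eps)=\pi\,\WW(\BPi)+O_\eps(1)$. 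So the paper itself is inconsistent about the factor $\pi$, and your argument in fact establishes both versions --- the genuine content, $l_\hyp(\Bga^\eps)=2\log(1/\Delta)+O_\eps(1)$ together with $\operatorname{mod}(\BPi)=\tfrac2\pi\log(1/\Delta)+O(1)$, is exactly right; just be aware that the constant you attribute to ``the normalization of the conformal width'' is a choice the paper never pins down, and downstream (Theorems~\ref{main estimate} and~\ref{thin-thick for S}) only consistency of that choice matters, not its value.
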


Let us now formulate an important transformation rule: 

\begin{lem}\label{tranform rule for rectangles}
   Let $\Bi: \BPi'\ra \BPi$ be a holomorpic map between two rectangles
which extends to homeomorphisms between the respective  horizontal  sides
$\BI'\ra \BI$ and $\BJ'\ra \BJ$. Then
$$
      \WW(  \BPi' ) \leq \WW ( \BPi ). 
$$ 
\end{lem}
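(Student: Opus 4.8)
The plan is to deduce the inequality from the extremal-length description of conformal width. As noted above, $\WW(\BPi)$ equals the width of the full path family joining $\BI$ to $\BJ$ in $\BPi$; write $\Ga$ for this family and $\Ga'$ for the corresponding family joining $\BI'$ to $\BJ'$ in $\BPi'$, so that $\WW(\BPi)=\WW(\Ga)$ and $\WW(\BPi')=\WW(\Ga')$, the width of a path family being its modulus
\[
\WW(\Ga)=\inf\Bigl\{\int_{\BPi}\rho^{2}\,dA:\ \rho\ge 0\ \text{Borel on }\BPi,\ \int_{\ga}\rho\,|dz|\ge 1\ \text{for every }\ga\in\Ga\Bigr\}.
\]
It then suffices to show $\WW(\Ga')\le\WW(\Ga)$, and for this I would manufacture, from an arbitrary $\Ga$-admissible metric $\rho$ on $\BPi$, a $\Ga'$-admissible metric $\rho'$ on $\BPi'$ with $\int_{\BPi'}(\rho')^{2}\,dA\le\int_{\BPi}\rho^{2}\,dA$, and then pass to the infimum over $\rho$.

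The metric to use is the pullback $\rho':=(\rho\circ\Bi)\,|\Bi'|$. First I would verify admissibility: if $\ga'\in\Ga'$ joins $\BI'$ to $\BJ'$, then since $\Bi$ carries the closures of $\BI'$ and $\BJ'$ into $\BI$ and $\BJ$ respectively, the image $\ga:=\Bi\circ\ga'$ joins $\BI$ to $\BJ$ inside $\BPi$, hence belongs to $\Ga$; the change of variables for line integrals under the holomorphic map $\Bi$ then gives
\[
\int_{\ga'}\rho'\,|dz|=\int_{\ga'}(\rho\circ\Bi)\,|\Bi'|\,|dz|=\int_{\ga}\rho\,|dw|\ge 1 .
\]
Second, the area bound: since $\Bi$ is a univalent holomorphic map of $\BPi'$ into $\BPi$, with real Jacobian $|\Bi'|^{2}$ and multiplicity one, the change of variables for area integrals gives
\[
\int_{\BPi'}(\rho')^{2}\,dA=\int_{\BPi'}(\rho\circ\Bi)^{2}\,|\Bi'|^{2}\,dA=\int_{\Bi(\BPi')}\rho^{2}\,dA\le\int_{\BPi}\rho^{2}\,dA ,
\]
the last inequality because $\Bi(\BPi')\subseteq\BPi$. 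Combining the two displays and taking the infimum over $\Ga$-admissible $\rho$ yields $\WW(\BPi')=\WW(\Ga')\le\WW(\Ga)=\WW(\BPi)$.

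I expect the two change-of-variables identities to be routine; the parts that actually carry the hypotheses are the two singled out above. Admissibility of $\rho'$ uses that $\Bi$ respects the horizontal sides — a general holomorphic map into $\BPi$ need not send $\BI$-to-$\BJ$ curves to $\BI$-to-$\BJ$ curves, so this hypothesis is exactly what places the pushed-forward curve $\ga$ in $\Ga$ — while the area estimate uses the univalence of $\Bi$: for a degree-$d$ cover one would only obtain $\WW(\BPi')\le d\,\WW(\BPi)$, so the clean inequality hinges on $\Bi$ being an embedding, as its notation suggests. Equivalently, the argument can be phrased invariantly: $\Bi$ is a conformal isomorphism of $\BPi'$ onto the conformal subrectangle $\Bi(\BPi')\subseteq\BPi$ carrying $(\BI',\BJ')$ to $(\Bi(\BI'),\Bi(\BJ'))$, so $\WW(\BPi')=\WW(\Bi(\BPi'))$ by conformal invariance, while $\WW(\Bi(\BPi'))\le\WW(\BPi)$ since any $\Ga$-admissible metric on $\BPi$ restricts to $\Bi(\BPi')$ as an admissible metric, of no larger area, for the path family there joining $\Bi(\BI')$ to $\Bi(\BJ')$ — which unwinds to the same computation.
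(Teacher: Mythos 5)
Your argument proves the lemma only under an extra hypothesis that the statement does not contain: you assume $\Bi$ is univalent (``a univalent holomorphic map of $\BPi'$ into $\BPi$, with multiplicity one''), and the area step $\int_{\BPi'}(\rho\circ\Bi)^2|\Bi'|^2\,dA=\int_{\Bi(\BPi')}\rho^2\,dA$ is exactly where injectivity is used. The lemma is stated, and is needed later, for an arbitrary holomorphic map that merely extends to homeomorphisms $\BI'\to\BI$, $\BJ'\to\BJ$ of the horizontal sides: in Lemma \ref{transform rules on D} and in the transformation rules for surfaces (Lemma \ref{generaltransform rules}(iii)) the map $\Bi$ arises as a lift to universal covers of a holomorphic map $i\colon S'\to S$ which is in general not injective (e.g.\ coverings and the immersions $i\colon U'\sm\KK\to U\sm\KK$), although its boundary restrictions do lift to homeomorphisms of the ideal intervals. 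Your own closing remark concedes the point: pulling back an admissible metric over the whole of $\BPi'$ and pushing curves forward can only give $\WW(\BPi')\le d\,\WW(\BPi)$ when $\Bi$ has degree $d$, so this route cannot yield the stated degree-free inequality; yet the inequality is true in that generality precisely because the extra degree is spent in the lateral direction, which the homeomorphic boundary behaviour forces, and which your metric-pullback computation cannot detect.

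The paper's proof goes through the dual picture and avoids the multiplicity loss. Let $\De$ and $\De'$ be the horizontal path families (curves joining the vertical sides), so that $\WW(\BPi)=\LL(\De)$ and $\WW(\BPi')=\LL(\De')$. Because $\Bi$ is a homeomorphism on each horizontal side, every $\de\in\De$ contains a subpath that lifts under $\Bi$ to a path of $\De'$: the preimage of $\de$ is disjoint from $\BI'\cup\BJ'$ and separates them, so a piece of it crosses $\BPi'$ horizontally. Given this lifting property one gets $\LL(\De)\ge\LL(\De')$ with no degree factor: push an admissible metric $\rho'$ forward by $\rho(z)=\sup_{w\in\Bi^{-1}(z)}\rho'(w)/|\Bi'(w)|$; admissibility for $\De$ follows from the lifts, while $\rho^2\le\sum_{w\in\Bi^{-1}(z)}\rho'(w)^2/|\Bi'(w)|^2$ integrates (by the change of variables summed over preimages) to at most $\int_{\BPi'}\rho'^2\,dA$, so no multiplicity appears. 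This yields $\WW(\BPi)\ge\WW(\BPi')$ for arbitrary holomorphic $\Bi$ respecting the horizontal sides. To repair your write-up you would either need to add this dual/lifting argument or restrict the lemma to embeddings, and the latter is not sufficient for the paper's applications.
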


\begin{proof}
Let $\De$ and $\De'$ be the horizontal path families in $\BPi$
and $\BPi'$ respectively.  
 Any  path  $\de$ in $\BPi$  contains a subpath that lifts to a 
  path in $\De'$.  Hence 
$$
     \WW(\BPi)= \LL(\De) \geq \LL(\De') = \WW(\BPi').
$$
\end{proof}

\sss{Canonical lamination}
 
  Let us consider the hyperbolic plane in the disk model $\D$. 
An {\it ideal marking } of $\D$ is a {\em tiling}  $\II$
of the circle  $\T=\di \D$ into intervals $\BI_1, \dots \BI_p$
(written in the order they appear on $\T$, up to a cyclic permutation). 
Thus, we can label the intervals by elements of $\Z/ p\Z$. 
We call $p$ the {\it cardinality } of the marking. 

Any non-adjacent pair of intervals $\BI_m$, $\BI_n$
determines an ideal quadrilateral $\ol \BPi_{mn}$ with ``horizontal'' sides $\BI_m $ and  $\BI_n $.
Let $\ol \FF_{mn}$ be the vertical harmonic foliation in $\ol \BPi_{mn}$,
and let  
$$
   \ol \WW_{mn}\equiv \ol \WW(\BI_m, \BI_n) \equiv \WW (\ol  \BPi_{mn} ) \equiv \WW(\ol \FF_{mn} )
$$ 
be the width of this quadrilateral.
It is equal to the width of the full path family in $\D$  connecting $\BI_n$ to $\BI_m$. 

Removing  the square buffers from the  quadrilaterals $\ol \BPi_{mn}$
with width  greater than 2,
we obtain quadralaterals $\BPi_{mn}$ supporting foliations $\FF_{mn}$
with widths
$$
  \WW_{mn} \equiv  \WW(\BI_m, \BI_n) \equiv \WW (\BPi_{mn}  ) \equiv
  \WW(\FF_{mn} )  = \ol \WW_{mn} -2. 
$$

\begin{rem}
   Again, we sometimes wlll use notation  $\WW_{\{ mn\} } \equiv
   \WW\{\BI_m, \BI_n\}$ to emphasize that the pair $\{m,n\}$ is not
   ordered. 
\end{rem}

\begin{lem}\label{canon fol}
    The foliations $\FF_{mn}$ together  form a lamination
    $\FF(\D,\II)$   on $\D$ (supported on $\bigsqcup \BPi_{mn}$).  
\end{lem}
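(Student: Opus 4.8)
The statement is an assertion that, after the unit‑width square buffers have been deleted, the truncated quadrilaterals $\BPi_{mn}$ have pairwise disjoint interiors. Granting that, the leaves of the various $\FF_{mn}$ are pairwise disjoint arcs and $\FF(\D,\II):=\overline{\bigcup_{mn}\FF_{mn}}$ is a lamination in the usual sense (each leaf a properly embedded arc, the support closed), so the whole statement reduces to this disjointness. The plan is to fix two distinct non‑adjacent pairs $\{m,n\}\neq\{m',n'\}$, each of width $>2$, and to argue according to how the three or four intervals $\BI_m,\BI_n,\BI_{m'},\BI_{n'}$ sit on $\T$.

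First, the \emph{unlinked} case: if the chords $\BI_m\BI_n$ and $\BI_{m'}\BI_{n'}$ do not link and the two pairs have no interval in common, then $\BI_{m'}$ and $\BI_{n'}$ lie in one and the same gap of $\{m,n\}$. The geodesic side of $\ol\BPi_{mn}$ spanning that gap then cuts $\D$ into two half‑planes, with $\ol\BPi_{mn}$ on one side and $\ol\BPi_{m'n'}$ on the other; so already the \emph{untruncated} quadrilaterals have disjoint interiors, and nothing needs to be removed. This is the routine part.

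In the remaining (\emph{linked}, or \emph{shared‑interval}) case, after relabelling $\BI_{m'}$ and $\BI_{n'}$ lie in the two different gaps of $\{m,n\}$ — the shared‑interval situation $\BI_{m'}=\BI_m$ being the degenerate instance — and symmetrically $\BI_m,\BI_n$ lie in the two gaps of $\{m',n'\}$. Hence the trace on $\ol\BPi_{mn}$ of any leaf of $\FF_{m'n'}$ is a crosscut (in the linked case it meets both geodesic sides of $\ol\BPi_{mn}$, so it is a curve of the horizontal family $\De_{mn}$; in the shared‑interval case it runs from one geodesic side to one interval), and symmetrically for leaves of $\FF_{mn}$ inside $\ol\BPi_{m'n'}$. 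Suppose, for contradiction, that a leaf $\ell\in\FF_{mn}$ and a leaf $\ell'\in\FF_{m'n'}$ meet at a point $p\in\BPi_{mn}\cap\BPi_{m'n'}$. Because $\ell$ lies in the \emph{truncated} quadrilateral $\BPi_{mn}$, each of the two sub‑quadrilaterals into which $\ell$ divides $\ol\BPi_{mn}$ has width $\geq 1$ — this is exactly what deleting a unit‑width square buffer at each geodesic end of $\ol\BPi_{mn}$ guarantees (using Lemma~\ref{tranform rule for rectangles} and the serial composition law $\WW_{mn}=\ol\WW_{mn}-2$). Following the trace of $\ell'$ from the side of $\ol\BPi_{mn}$ on which it starts until it first meets $\ell$, one gets a sub‑arc $\beta$ of the leaf $\ell'$ that crosses a width‑$\geq 1$ sub‑quadrilateral of $\ol\BPi_{mn}$ from wall to wall, hence has length $\geq 1$ in the extremal flat metric of $\ol\BPi_{mn}$; on the other hand $\beta\subset\ell'$ has length $\leq 1$ in the extremal flat metric of $\ol\BPi_{m'n'}$. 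Running the symmetric construction for $\ell$ inside $\ol\BPi_{m'n'}$ (whose trace there is a crosscut spanning the full width $\ol\WW_{m'n'}>2$ in that metric, while $\ell$ has unit length in the metric of $\ol\BPi_{mn}$), and comparing the two flat metrics on the overlap $O=\ol\BPi_{mn}\cap\ol\BPi_{m'n'}$, one finds each metric forced to dominate the other by a factor exceeding $1$ in transverse directions through $p$ — an impossibility. Equivalently, the bookkeeping shows $O\subseteq(\text{removed buffers of }\ol\BPi_{mn})\cup(\text{removed buffers of }\ol\BPi_{m'n'})$, contradicting $p\in\BPi_{mn}\cap\BPi_{m'n'}$.

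The one genuinely hard point is the \emph{calibration}: that removing buffers of width \emph{exactly} $1$ at each end (total $2$) is not merely sufficient up to a constant but precisely enough to annihilate every overlap. I expect to establish this via the following hyperbolic‑geometric fact, proved by an explicit computation in the upper half‑plane: when a geodesic side of one convex‑hull quadrilateral cuts across the other from one of its corners, the region it severs on the far side has conformal width at most $1$ when measured from that corner's geodesic — more precisely, the "width of $O$ read inside $\ol\BPi_{mn}$ from the geodesic side facing $O$" plus the "width of $O$ read inside $\ol\BPi_{m'n'}$ from its geodesic side facing $O$" is $\leq 2$. Given that, $O$ minus the unit‑width buffer of $\ol\BPi_{mn}$ is contained in the unit‑width buffer of $\ol\BPi_{m'n'}$, which is the required disjointness. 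Everything else — the case split, the reduction to non‑crossing of leaves, and the manipulation of $\De_{mn}$ and the composition laws for widths — is routine.
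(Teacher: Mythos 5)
There is a genuine gap, and it sits exactly where you flag it. The entire content of this lemma is the Non-Intersection Principle (the paper's proof is a direct appeal to it, established in the reference [K]): after deleting the two unit buffers, any overlap of $\ol\BPi_{mn}$ and $\ol\BPi_{m'n'}$ is contained in the union of the deleted buffers. Your write-up reduces the lemma to this statement, handles the easy unlinked case correctly, and then does not prove the hard case: the final inequality (``width of the overlap read from the facing geodesic side of one quadrilateral plus the same for the other is $\leq 2$'') is precisely the principle itself, and you explicitly defer it (``I expect to establish this via an explicit computation''). The intermediate ``contradiction'' you offer does not work as stated: from the two observations --- a subarc $\beta$ of a leaf of $\FF_{m'n'}$ with $\la$-length $\geq 1$ and $\la'$-length $\leq 1$, and the trace of a leaf of $\FF_{mn}$ with $\la$-length $\leq 1$ and $\la'$-length $>2$ --- no contradiction follows, since two flat metrics on the overlap can perfectly well dominate each other along different curves; the phrase ``each metric forced to dominate the other in transverse directions through $p$'' is not backed by any estimate.

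A further point worth sorting out: your case split lumps together the genuinely linked case (four distinct intervals, crossing chords) and the shared-interval case, but they behave differently. In the linked case one can rule out any intersection of the foliations outright by an extremal-length test-metric argument: every vertical leaf of $\ol\BPi_{m'n'}$ must cross $\ol\BPi_{mn}$ from geodesic side to geodesic side, so using the flat metric of $\ol\BPi_{mn}$ (extended by zero) as a test metric gives $\ol\WW_{mn}\cdot\ol\WW_{m'n'}\leq 1$, so the two truncated rectangles cannot both be nonempty and no buffer calibration is needed there. The only place where the precise value $1+1=2$ of the buffers matters is the shared-interval case, and that is exactly the computation you have not supplied. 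So as it stands the proposal identifies the right reduction but leaves the heart of the lemma --- the quantitative buffer estimate that the paper imports as the Non-Intersection Principle --- unproved.
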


\begin{proof}
  Follows from the Non-Intersection Principle.
\end{proof}

The lamination $\FF(\D,\II)$ is called the {\it canonical lamination
of the  marked hyperbolic plane}. 

\begin{cor}
   There exits at most $p-2$ quadrilaterals $\BPi_{\{mn\}} $.
\end{cor}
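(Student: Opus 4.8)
The plan is to convert the disjointness of the quadrilaterals provided by Lemma~\ref{canon fol} into a non-crossing condition for chords of a $p$-gon, and then apply the elementary bound on non-crossing families of diagonals.

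Write $\AAA$ for the set of non-adjacent pairs $\{m,n\}$ with $\ol \WW_{mn}>2$; by construction these are precisely the pairs for which $\BPi_{\{mn\}}$ is defined, so the assertion is $|\AAA|\le p-2$. For each $\{m,n\}\in\AAA$ I would fix one leaf $\gamma_{\{mn\}}$ of $\FF_{mn}$: this is an embedded arc in $\D$ joining a point in the interior of $\BI_m$ to a point in the interior of $\BI_n$, with its endpoints avoiding the $p$ division points of $\II$. Since by Lemma~\ref{canon fol} the supports $\BPi_{\{mn\}}$ are pairwise disjoint, so are the arcs $\gamma_{\{mn\}}$.

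Now collapse each interval $\BI_m$ to a point $v_m$. Then $\T$ becomes the boundary of a convex $p$-gon with vertices $v_1,\dots,v_p$ in this cyclic order, and each $\gamma_{\{mn\}}$ descends to a chord from $v_m$ to $v_n$; because $\{m,n\}$ is non-adjacent, this chord is a genuine diagonal of the $p$-gon. Disjointness of the arcs forces the corresponding diagonals to be pairwise non-crossing: by the Jordan curve theorem an arc joining $\BI_m$ to $\BI_n$ separates $\D$, hence separates the remaining intervals into the two complementary arcs of $\T$, so no arc disjoint from it can join an interval on one side to an interval on the other. Distinct pairs in $\AAA$ plainly give distinct diagonals. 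Since a pairwise non-crossing family of diagonals of a convex $p$-gon has at most $p-3$ members (extend it to a triangulation, which has exactly $p-3$ diagonals), we obtain $|\AAA|\le p-3\le p-2$; in fact this already gives the sharper bound $p-3$.

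The one point requiring care is the passage from the lamination to honestly disjoint embedded spine-arcs; this is essentially Lemma~\ref{canon fol} together with the remark that a leaf of a foliated embedded quadrilateral is an embedded arc between its horizontal sides. Everything after that is planar topology and the standard count of non-crossing diagonals, so I do not expect a genuine obstacle.
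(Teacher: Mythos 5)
Your proof is correct and is exactly the argument the paper leaves implicit: the disjointness of the quadrilaterals guaranteed by Lemma~\ref{canon fol} turns the pairs $\{m,n\}$ into a non-crossing family of diagonals of a convex $p$-gon, which has at most $p-3\le p-2$ members. Note that your count in fact gives the slightly sharper bound $p-3$, consistent with (and stronger than) the stated $p-2$.
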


\sss{Conformal geometry vs hyperbolic}

Let $\Bga_n$ be the hyperbolic geodesic in $(\D, \II)$ sharing the
endpoints with the $\BI_n$.
Let
$$
\Bga^\eps_{mn}= \{ z\in\Bga_m : \ \dist_\hyp (z, \Bga_n) < \eps\}.
$$
%
% For non-adjacent $I_m$ and $I_n$ we let
% $$
%      d_{mn}\equiv d(\geoD_m, \geoD_n)
% $$
% be the hyperbolic distance between $\geoD_m$ and $\geoD_n$. 
%
% \sss{Thin part} 

Lemma \ref{geod in quadr}  implies: 

\begin{lem}\label{hyp d vs length}
   We have: $\WW_{mn} = l_\hyp(\Bga_{mn}^\eps )   + O_\eps (1). $
\end{lem}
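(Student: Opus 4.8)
The plan is to deduce this from Lemma~\ref{geod in quadr} applied to the \emph{ideal} quadrilateral $\ol\BPi_{mn}$, and then to pass from $\ol\WW_{mn}$ to $\WW_{mn}$ using the identity $\WW_{mn} = \ol\WW_{mn} - 2$, the constant $2$ being harmless inside the error term $O_\eps(1)$.

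First I would recall that $\ol\BPi_{mn}$ is, by definition, the hyperbolic plane $\D$ equipped with the structure of a conformal quadrilateral whose two ``horizontal'' sides are the arcs $\BI_m$ and $\BI_n$, and that its width is $\ol\WW_{mn}$. Hence the uniformizing map sends $\ol\BPi_{mn}$ conformally onto a standard rectangle $\BPi$ with $\WW(\BPi) = \ol\WW_{mn}$, carrying $\BI_m$, $\BI_n$ to the horizontal sides $\BI$, $\BJ$. Being conformal, this map carries the hyperbolic metric of $\D$ to the hyperbolic metric of $\BPi$; it carries the two ideal endpoints of $\BI_m$ (resp.\ $\BI_n$) to the endpoints of $\BI$ (resp.\ $\BJ$), and therefore the complete geodesic $\Bga_m$ (resp.\ $\Bga_n$) to the geodesic $\Bga$ (resp.\ $\Bde$) of $\BPi$ from Lemma~\ref{geod in quadr}; and consequently it carries $\Bga^\eps_{mn}$ to the set $\Bga^\eps$ of that lemma. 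Since hyperbolic length is a conformal invariant, $l_\hyp(\Bga^\eps_{mn})$ equals the corresponding length computed inside $\BPi$.

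Now Lemma~\ref{geod in quadr} yields $l_\hyp(\Bga^\eps_{mn}) = \WW(\BPi) + O_\eps(1) = \ol\WW_{mn} + O_\eps(1)$. As $\BPi_{mn}$, and hence the quantity $\WW_{mn}$, is defined precisely when $\ol\WW_{mn} > 2$, in that case $\ol\WW_{mn} = \WW_{mn} + 2$, and we get $l_\hyp(\Bga^\eps_{mn}) = \WW_{mn} + O_\eps(1)$, as claimed. I do not expect a genuine obstacle here; the one point deserving attention is the bookkeeping of the second paragraph — in particular, noticing that it is cleaner to apply Lemma~\ref{geod in quadr} to $\ol\BPi_{mn}$ (whose underlying Riemann surface is just $\D$, so that its geodesics and hyperbolic metric are exactly $\Bga_m$, $\Bga_n$ and $l_\hyp$) rather than to the buffered quadrilateral $\BPi_{mn}$, and only afterwards subtracting the total width $2$ of the removed square buffers.
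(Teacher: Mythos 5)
Your proposal is correct and follows exactly the route the paper intends: the paper derives Lemma~\ref{hyp d vs length} directly from Lemma~\ref{geod in quadr} (applied, via conformal invariance of the hyperbolic metric and of the geodesics, to the ideal quadrilateral $\ol\BPi_{mn}$), with the buffer correction $\ol\WW_{mn}-\WW_{mn}=2$ absorbed into $O_\eps(1)$. Your bookkeeping of that reduction is accurate, so there is nothing to add.
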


\sss{Thick part}

let $\BGa= \cup \Bga_n$. 
It is  the polygonal curve that bounds  the convex hull $\BP$  of $\di \II= \bigcup \di \BI_n$.  
Let $ \BGa^\eps = \bigcup \Bga^\eps_{mn}$ be the $\eps$-{\it thin part} of $\BGa$. 

\begin{prop}\label{thick part estimate}
 For an ideally  marked hyperbolic plane $(\D, \II)$ with $p$
 intervals $I_n$ and for any $\eps>0$, we have:
$$
       l_\hyp (\BGa \sm \BGa^\eps) = O(p/\eps).
$$
\end{prop}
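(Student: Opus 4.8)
The plan is to estimate the hyperbolic length of the thick part $\BGa\sm\BGa^\eps$ by breaking $\BGa$ into its $p$ geodesic sides $\Bga_n$ and controlling the length of the thick part of each side. Fix an interval $\BI_n$ and its geodesic $\Bga_n$. For each other non-adjacent index $m$, the set $\Bga^\eps_{mn}$ is the portion of $\Bga_n$ that lies $\eps$-close to $\Bga_m$; by Lemma~\ref{hyp d vs length} (applied with the roles of $m,n$ interchanged, which only affects the implicit constant), $l_\hyp(\Bga^\eps_{nm})=\WW_{mn}+O_\eps(1)$. The key combinatorial input is the Corollary after Lemma~\ref{canon fol}: among all $m$ there are at most $p-2$ pairs $\{m,n\}$ for which the quadrilateral $\BPi_{mn}$ survives the buffer removal, i.e.\ for which $\WW_{mn}>0$; for all other $m$ the set $\Bga^\eps_{nm}$ has length $O_\eps(1)$ (it is contained in an $\eps$-neighborhood of a geodesic that stays conformal-distance $\le 2$ away, so its hyperbolic length is bounded in terms of $\eps$ alone).

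First I would show that for a single side $\Bga_n$, the complementary thick part $\Bga_n\sm\bigcup_m \Bga^\eps_{nm}$ has hyperbolic length $O(1/\eps)$, uniformly in $n$ and in the marking. The point is that a geodesic ray leaving the thin parts escapes toward one of the two endpoints of $\BI_n$ (which are ideal points, i.e.\ cusps of the relevant funnel), and the length of a horocyclic-type collar between successive thin regions is controlled: each time $\Bga_n$ exits one region $\Bga^\eps_{nm}$ it must, within bounded hyperbolic length, either terminate or enter the next, because the geodesics $\Bga_m$ are nested (Non-Intersection Principle) and order themselves along $\Bga_n$ according to their linking with $\BI_n$. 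Quantitatively, between two consecutive thin segments the distance is $O(1/\eps)$ — this is the standard estimate that two disjoint geodesics whose $\eps$-neighborhoods fail to meet $\Bga_n$ on an interval of length $L$ force $L=O(1/\eps)$ (collar lemma type bound). Since along $\Bga_n$ there are at most $p-2$ surviving thin segments and two ``end'' segments running to the ideal endpoints, we get $l_\hyp(\Bga_n\sm\BGa^\eps)=O(p/\eps)$ for that side alone, hence summing over the $p$ sides gives $O(p^2/\eps)$ — which is worse than claimed.

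To get the sharp $O(p/\eps)$ bound I would instead count globally rather than side-by-side: the thin segments $\Bga^\eps_{\{mn\}}$ are indexed by the \emph{unordered} pairs $\{m,n\}$ with $\WW_{mn}>0$, of which there are at most $p-2$ total (not per side). Each surviving pair contributes one segment to $\Bga_m$ and one to $\Bga_n$, so across all of $\BGa$ there are at most $2(p-2)$ thin segments of positive width, plus the $\le 2p$ ``short'' segments of length $O_\eps(1)$ coming from non-surviving pairs and from the two ideal ends of each side. The gaps between consecutive thin segments along $\BGa$ then number $O(p)$, each of hyperbolic length $O(1/\eps)$ by the collar estimate above, so $l_\hyp(\BGa\sm\BGa^\eps)=O(p/\eps)$. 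The main obstacle is making the ``each gap has length $O(1/\eps)$'' step precise and uniform: one must verify that a maximal subarc of $\Bga_n$ disjoint from every $\Bga^\eps_{nm}$ genuinely has hyperbolic length $O(1/\eps)$, which requires checking that the only way such an arc can be long is by approaching an ideal endpoint of $\BI_n$ (handled by the two end-segments) — in other words, that there is no ``escape to infinity in the middle,'' which follows because the geodesics $\{\Bga_m\}$ together with $\Bga_n$ cut off the entire ideal boundary $\di\II$ and hence leave no large convex gap unaccounted for. I expect this uniformity check, rather than any individual length estimate, to be where the real work lies.
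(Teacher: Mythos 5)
The step you yourself flag as the crux is in fact false: there is no collar-type estimate forcing a maximal subarc of $\Bga_n$ that avoids every $\Bga^\eps_{nm}$ to have length $O(1/\eps)$, so neither your per-side bound nor your global count of $O(p)$ gaps, each of length $O(1/\eps)$, can be made to work. Concretely, work in the upper half-plane model, let $\BI_n$ be the boundary interval with endpoints $0$ and $\infty$ (so $\Bga_n$ is the imaginary axis), and tile the complementary arc $(0,\infty)$ by the intervals $[0,e],\ [e,e^2],\ \dots,\ [e^{p-3},e^{p-2}],\ [e^{p-2},\infty]$. Every geodesic $\Bga_m$ non-adjacent to $\Bga_n$ is the half-circle over some $[e^k,e^{k+1}]$, and its hyperbolic distance $d$ to the imaginary axis satisfies $\cosh d=\frac{e+1}{e-1}$, i.e.\ $d\approx 1.4$, independently of $k$ (apply the isometry $z\mapsto e^{-k}z$). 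Hence for any $\eps<1$ these geodesics contribute nothing at all to the thin part of $\Bga_n$: the only thin segments on $\Bga_n$ are the two cuspidal ones coming from the adjacent sides, and the single gap between them is thick, of length $\asymp p$. Pushing the ratio of consecutive division points up to $\asymp \eps^{-2}$ (the threshold at which the middle geodesics first enter the $\eps$-neighborhood of $\Bga_n$), a single gap can even have length $\asymp p\log(1/\eps)$. So the bound $O(p/\eps)$ is genuinely global: it cannot be obtained gap by gap, and your heuristic that the $\Bga_m$ ``cut off the entire ideal boundary and leave no large convex gap'' fails because all of them may simply lie deeper than $\eps$ from $\Bga_n$ along an arbitrarily long stretch.

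The paper avoids any such local analysis with a short area-packing argument. Let $L=l_\hyp(\BGa\sm\BGa^\eps)$ and take a maximal $\eps$-separated set $\{x_i\}$ in $\BGa\sm\BGa^\eps$. If $x_i\in\Bga_n$ is a thick point, a ball of radius $\eps$ around $x_i$ meets $\BGa\sm\BGa^\eps$ only inside $\Bga_n$ (a point of another $\Bga_m$ in that ball would itself be $\eps$-close to $\Bga_n$, hence thin), so each such ball covers length at most $2\eps$ of the thick part, and by maximality there are at least about $L/\eps$ points $x_i$. The hyperbolic disks of radius $\eps/2$ centered at the $x_i$ are pairwise disjoint, and since each $x_i$ lies on the boundary of the convex polygon $\BP$ bounded by $\BGa$, each disk meets $\BP$ in area at least $c\,\eps^2$. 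As $\area(\BP)=\pi(p-2)$ by Gauss--Bonnet, this gives $(L/\eps)\cdot c\,\eps^2\le \pi(p-2)$, i.e.\ $L=O(p/\eps)$. If you wish to keep a decomposition along the sides $\Bga_n$, you would still need a global area or width input of this kind; the length of an individual gap simply is not bounded in terms of $\eps$ alone.
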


\begin{proof}
  Let $L= l_\hyp (\BGa \sm \BGa^\eps) $.
% and let $P$ be the ideal polygon
%  bounded by $\BGa$  (i.e., $P$ is the convex hull of $\di\II$). 
  Let us consider a biggest net of $\eps$-separated points $\{ x_i\}$ in $\BGa\sm
  \BGa^\eps$. It contains approximately  $ L/\eps$ points. The
  hyperbolic  disks $\BD_i$  of
  radius $\eps/2$ centered at these points do not overlap, so the total
  hyperbolic area of the half-disks $\BD_i \cap \BP$  is bouded by the hyperbolic area of the polygon
  $\BP $, which is equal to  $\pi (p-2)$. It gives
$$
        \frac L\eps \cdot \eps^2  \leq C  p.
$$
\end{proof}

 \sss{Thin-thick Decomposition}

Let $\ol  \WW_n \equiv \bar \WW(\BI_n)$ be the width of the path family connecting $\BI_n$ 
all other non-adjacent intervals (i.e., to  $\displaystyle{ \T\sm \bigcup_{|k-n| >1} \BI_k } $),
and let 
$$
           \WW_n\equiv    \WW(\BI_n) =   \max  \{ \ol \WW(\BI_n)  -2, \ 0 \} .
$$ 
We call
$$
      \BW \equiv \BW(\D, \II ) :  = \sum \WW_n 
$$
the {\it total weight} of the marked hyperbolic plane. 

\begin{thm}\label{main estimate} 
For any ideal marking $\II = (\BI_1, \dots, \BI_p)$ of the hyperbolic plane
$\D$,  we have:
$$
         \BW (\D, \II)   - O(p)    \leq   2 \sum \WW\{\BI_n, \BI_m\}  
     \leq    \BW(\D, \II ) 
$$
$($where the summation in the middle is taken over non-ordered pairs
$\{ \BI_m, \BI_n\}$$)$. 
\end{thm}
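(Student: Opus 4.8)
The plan is to prove the two inequalities by different tools, in each case reducing to a pointwise comparison over the intervals $\BI_n$ and then summing over $n$. Set $\sigma_n:=\sum_{m:|m-n|>1}\WW_{\{mn\}}$, the total weight of the non-degenerate quadrilaterals incident to $\BI_n$, and let $k_n$ be their number. Since each non-ordered pair $\{m,n\}$ is seen once from $\BI_m$ and once from $\BI_n$, we have $\sum_n\sigma_n=2\sum_{\{m,n\}}\WW_{\{mn\}}$, and $\sum_n k_n=2\cdot\#\{\text{non-degenerate }\BPi_{\{mn\}}\}\le 2(p-2)$ by the Corollary to Lemma~\ref{canon fol} (the non-degenerate quadrilaterals have pairwise non-crossing ``diagonals'' by the Non-Intersection Principle). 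Thus, with $\eps>0$ fixed once and for all, the right inequality of Theorem~\ref{main estimate} reduces to the pointwise bound $\sigma_n\le\WW_n$, and the left inequality to $\WW_n\le\sigma_n+O_\eps(1+k_n)$.

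For the right inequality I would argue by extremal width. Fix $n$. By Lemma~\ref{canon fol} the quadrilaterals $\BPi_{\{mn\}}$ with $|m-n|>1$ are pairwise disjoint, being pieces of the lamination $\FF(\D,\II)$, and each carries the vertical foliation $\FF_{mn}$ of width $\WW_{\{mn\}}$ whose leaves join $\BI_n$ to $\BI_m$. By the parallel rule --- widths of foliations supported on disjoint domains add --- the union foliation $\bigsqcup_m\FF_{mn}$ has width exactly $\sigma_n$. On the other hand this union is a sub-family of the buffered path family that computes $\WW_n$, namely the one-to-many family computing $\ol\WW(\BI_n)$ with the two unit-square buffers against $\BI_{n\pm1}$ excised: its leaves run from $\BI_n$ to the union of the non-adjacent intervals while staying clear of $\BI_{n\pm1}$ by exactly those margins. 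Since enlarging a path family does not decrease its width (the monotonicity implicit in the proof of Lemma~\ref{tranform rule for rectangles}), $\sigma_n\le\WW_n$; summing over $n$ gives $2\sum_{\{m,n\}}\WW_{\{mn\}}\le\BW(\D,\II)$.

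For the left inequality I would pass to the hyperbolic model and trade widths for geodesic lengths. By Lemma~\ref{hyp d vs length}, $\WW_{\{mn\}}=l_\hyp(\Bga_{nm}^\eps)+O_\eps(1)$; and, running the argument of Lemma~\ref{geod in quadr} on the ``one-to-many'' configuration computing $\ol\WW(\BI_n)$ --- whose target $\bigcup_{|m-n|>1}\Bga_m$ is disconnected, with $k_n$ gaps each contributing $O_\eps(1)$ --- one gets $\WW_n=l_\hyp(\Bga_n^{\eps,*})+O_\eps(1+k_n)$, where $\Bga_n^{\eps,*}=\{z\in\Bga_n:\dist_\hyp(z,\bigcup_{|m-n|>1}\Bga_m)<\eps\}=\bigcup_{|m-n|>1}\Bga_{nm}^\eps$. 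As the length of a union is at most the sum of the lengths, $l_\hyp(\Bga_n^{\eps,*})\le\sum_m l_\hyp(\Bga_{nm}^\eps)=\sigma_n+O_\eps(k_n)$, so $\WW_n\le\sigma_n+O_\eps(1+k_n)$. Summing over $n$ and using $\sum_n k_n=O(p)$ --- here Proposition~\ref{thick part estimate} is the input guaranteeing that the parts of the polygon $\BGa$ not captured by any thin piece $\Bga_{nm}^\eps$ accumulate to only $O(p/\eps)$, so that the summed error stays $O(p)$ --- yields $\BW(\D,\II)\le 2\sum_{\{m,n\}}\WW_{\{mn\}}+O(p)$.

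The step I expect to be the main obstacle is the \emph{lossless} containment used in the second paragraph: showing that a leaf of $\FF_{mn}$, which only ``knows'' to avoid the two unit buffers of its own quadrilateral $\ol\BPi_{\{mn\}}$, automatically avoids the two unit buffers of the one-to-many configuration sitting against $\BI_{n\pm1}$, so that the width of $\bigsqcup_m\FF_{mn}$ is dominated by $\WW_n$ with \emph{no} additive loss (the naive hyperbolic estimate only gives $\sigma_n\le\WW_n+(\text{total shadow overlap on }\Bga_n)+O_\eps(k_n)$). The delicacy is that the relevant buffers are unit squares measured in \emph{different} conformal coordinates, so one must check that they nest in the right direction; in the hyperbolic language this is the convexity assertion that the ``shadows'' $\Bga_{nm}^\eps$ of the non-adjacent sides on the fixed side $\Bga_n$ of the convex polygon $\BP$ overlap only inside a set of length $O_\eps(k_n)$ --- exactly the kind of thin--thick bookkeeping that Proposition~\ref{thick part estimate} is designed to supply. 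Granting this, the pointwise chain $\sigma_n\le\WW_n\le\sigma_n+O_\eps(1+k_n)$ holds for every $n$, and summing over $n$ proves Theorem~\ref{main estimate}.
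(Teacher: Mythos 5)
Your right-hand inequality is essentially the paper's own argument (disjointness of the buffered rectangles plus the parallel law, summed over $m$ with each pair counted twice), and you are right that the only delicate point there is the buffer bookkeeping behind $\sigma_n\le\WW_n$; but note that Proposition~\ref{thick part estimate} has nothing to do with that point --- it is a Gauss--Bonnet length bound, whereas the buffer nesting is a pure monotonicity statement about unit buffers in a sub\-quadrilateral versus the ambient one-to-many configuration (in the spirit of Lemma~\ref{tranform rule for rectangles}).

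The genuine gap is in your left-hand inequality: the per-vertex reduction $\WW_n\le\sigma_n+O_\eps(1+k_n)$ is false. Take $\BI_n$ of definite size, its two neighbours of size $\epsilon$, and let the remaining intervals grow geometrically (by a large fixed factor) as one moves away from $\BI_n$ along either side. Then every pair width $\ol\WW_{nm}$ stays below $2$, so $\sigma_n=0$ and $k_n=0$, while $\WW_n\asymp\log(1/\epsilon)\asymp p$: the local weight is spread over $\sim p$ non-adjacent intervals, each receiving width just under the threshold. (This also shows your intermediate identity $\WW_n=l_\hyp(\Bga_n^{\eps,*})+O_\eps(1+k_n)$ and the bookkeeping step $\sum_m l_\hyp(\Bga_{nm}^\eps)=\sigma_n+O_\eps(k_n)$ cannot be right as stated: pairs with $\WW_{\{mn\}}=0$ can still have nonempty $\eps$-near segments $\Bga_{nm}^\eps$, each contributing a constant, and there can be $\sim p$ of them at a single vertex; moreover the part of $\Bga_n$ close to the single geodesic spanning all non-adjacent intervals need not be close to the individual $\Bga_m$ --- the polygonal region in between has inradius growing like $\log p$.) The paper avoids any per-vertex statement: it chooses $\eps$ to be a Margulis-type constant so that the segments $\Bga_{mn}^\eps$ are pairwise disjoint (hence the relevant error is a \emph{global} count of $O(p)$ near pairs, not a count of positive-weight pairs $k_n$), runs \emph{one} Gauss--Bonnet estimate (Proposition~\ref{thick part estimate}) to bound the total length of the thick part of the whole polygon $\BGa$ by $O(p/\eps)$, splits off the cuspidal part along adjacent sides, and only then compares, side by side, $l_\hyp(\Bga_m^\eps)$ (defined relative to the geodesic spanning $\Bga_{m-1}\cup\Bga_m\cup\Bga_{m+1}$) with $\WW_m$ via Lemma~\ref{hyp d vs length}. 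If you replace your per-vertex error $O_\eps(1+k_n)$ by errors that are only controlled after summation over $n$ --- exactly what the Margulis choice of $\eps$ and the single global area bound provide --- your argument collapses back into the paper's proof; as written, the pointwise chain you sum over $n$ does not hold.
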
 

\begin{proof}
  Since the rectangles $\BPi_{mn}$ are disjoint, we have for each $m$: 
$$
    \sum_n \WW_{\{mn\}} \leq \WW_m.
$$
Summation over $m$ gives the right-hand side estimate (note that each
$\WW_{ \{mn\} }$ is counted twice as we sum  them  up). 

\msk To prove the left-hand estimate, let us take $\eps$ to be the
Margulis constant. Then the  geodesic intervals $\Bga^\eps_{mn}$ are
pairwise disjoint.  Let 
$$
   \BGa^\thin = \bigsqcup_{|m-n|>1} \Bga_{mn}^\eps 
$$
(where $|m-n|$ means the natural distance in $\Z/ p\Z$ such that $m$
and $m+1$ stay distance 1 apart).  
Together with Lemma \ref{hyp d vs length} this implies:

$$
     2\sum \WW_{ \{mn\} } \geq \sum_{|m-n|>1} l_\hyp (\Bga_{mn}^\eps ) - O(p)
     =  l_\hyp ( \BGa^\thin  ) - O(p).
$$

Combining this with Lemma \ref{thick part estimate}, we obtain: 
\begin{equation}\label{thin-thick part}
     2\sum \WW_{ \{mn\} } \geq   l_\hyp ( \BGa^\thin \cup \BGa^\thick ) - O(p).
\end{equation}

Let us now consider the cuspidal  part of $\BGa$:  
$$
 \BGa^\cusp=  \bigcup_{|m-n|=1} \Bga^\eps_{mn}.
$$
We have a partition: 
$$
   \BGa =    \BGa^\thin \cup \BGa^\cusp \cup \BGa^\thick. 
$$
Incorporating it into (\ref{thin-thick part}), we obtain:
$$
    2\sum \WW_{ \{mn\} } \geq   l_\hyp ( \BGa\sm \BGa^\cusp ) - O(p).
$$

Finally, let $\Bga_m^\eps$ consists of those points $z\in \Bga_m$ that
stay distance less than $\eps$ from the geodesic connecting the
endpoints of $\Bga_{m-1}\cup \Bga_m \cup \Bga_{m+1}$. Since $\eps$ is
the Margulis constant, $\Bga_m^\eps \subset \Bga_m\sm \BGa^\cusp$, 
so the last estimate implies
$$
       2\sum \WW_{ \{mn\} } \geq  \sum  l_\hyp ( \Bga_m^\eps ) - O(p).
$$
By Lemma \ref{hyp d vs length},
$ l_\hyp ( \Bga_m^\eps ) = \WW_m + O(1)$, and we are done.
\end{proof}

\sss{Local weights and fluxes for a partially marked plane}

Let $\II= (\BI_1, \dots, \BI_p)$ be a disjoint family of closed
intervales in $\T$.
% with disjoint interiors.
 It is called a {\it partial marking} of $\D$ (of cardinality $p$). 
The {\it gaps} $G_k$  of a partially marked plane are the closures of the
components of $\displaystyle{ \T \sm \bigcup \BI_n}$. Of course, putting
together the intervals and the gaps, we obtain a full ideal  marking
of $\D$. The {\it local weights} $\WW(\BI_n)$ and {\it fluxes}
$\WW(\BG_k)$ are understood in the sense of this full marking.     
 
Let us consider two partially marked copies of the hyperbolic planes, $(\D, \II)$ 
and $ (\D', \II')$, of the same cardinality. We write 
$\Bi : (\D',\II')  \ra (\D, \II) $ for a map $\Bi: \D' \ra \D$
that  extends to a  homeomorphism   $\BI_n' \ra \BI_n$ for any  $n=1,\dots, p$.   
In this case, we have an obvious consistent labeling  of the gaps
$\BG_k$ and $\BG_k'$. 

\begin{lem}\label{transform rules on D}
  For any holomorphic map $\Bi: (\D', \II') \ra (\D, \II) $ between
  partially marked hyperbolic disks we have:

\ssk\nin $\mathrm { (i) } $
$
   \WW(\BI_m' , \BI_n')  \leq \WW(\BI_m, \BI_n). 
$

\ssk\nin $ \mathrm {(ii)} $  $\WW(\BG_k' ) \geq \WW(\BG_k)$. 
\end{lem}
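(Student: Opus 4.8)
The plan is to reduce both statements to the transformation rule for rectangles, Lemma \ref{tranform rule for rectangles}, by interpreting the local weights and fluxes as widths of appropriate path families and then exhibiting the right monotonicity under the holomorphic map $\Bi$. For part (i): the width $\WW(\BI'_m,\BI'_n)$ is, by definition, the width of the quadrilateral $\BPi'_{mn}$ obtained from $\ol\BPi'_{mn}$ by removing square buffers, and likewise $\WW(\BI_m,\BI_n)=\WW(\BPi_{mn})$. Since $\Bi$ carries $\BI'_m$ homeomorphically onto $\BI_m$ and $\BI'_n$ onto $\BI_n$, it sends the ideal quadrilateral with those horizontal sides to the one downstairs (or rather: any horizontal path in $\ol\BPi'_{mn}$ projects under $\Bi$ into a horizontal path of $\ol\BPi_{mn}$, since $\Bi$ respects which boundary arcs are $\BI_m$ and $\BI_n$), so Lemma \ref{tranform rule for rectangles} applied to the restriction $\Bi\colon \ol\BPi'_{mn}\to\ol\BPi_{mn}$ gives $\ol\WW(\BI'_m,\BI'_n)\le\ol\WW(\BI_m,\BI_n)$. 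Subtracting the common buffer constant $2$ (and using $\max\{\cdot,0\}$ monotonicity at the threshold) yields $\WW(\BI'_m,\BI'_n)\le\WW(\BI_m,\BI_n)$. One subtlety to address is that $\Bi$ is only assumed holomorphic from $\D'$ to $\D$, not injective or surjective, so I should phrase the argument purely in terms of the length–area (extremal length) characterization of width, where $\LL(\De')\le\LL(\Bi^{-1}\De)$-type inequalities hold for arbitrary holomorphic maps, exactly as in the proof of Lemma \ref{tranform rule for rectangles}.

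For part (ii), the key observation is that the flux $\WW(\BG_k)$ is a \emph{width of the complementary (dual) path family}: $\BG_k$ is a gap of the full marking, so $\WW(\BG_k)=\sum_{j}\WW(\BG_k,\BI_j)+\cdots$ — more precisely $\ol\WW(\BG_k)$ is the width of the path family connecting $\BG_k$ to all non-adjacent tiles. Here the monotonicity reverses, because $\Bi$ maps the \emph{interval} tiles forward but pushes the gaps in the opposite direction: a gap $\BG'_k$ upstairs maps \emph{into} $\BG_k$ downstairs only after taking complements, so under the correspondence the downstairs gap $\BG_k$ is "larger". Concretely, I would use the dual/conjugate-foliation description: on $\D$, for a fixed gap $\BG_k$, the width $\WW(\BG_k)$ equals the width of the vertical path family in the quadrilateral whose horizontal sides are $\BG_k$ and its complementary arc; pulling back by $\Bi$, a path connecting $\BG_k$ to the complement lifts (on subpaths) to paths connecting a subset of $\BG'_k$ to a superset of its complement, so the horizontal path family downstairs refines into one upstairs, giving $\LL$-monotonicity in the direction $\WW(\BG'_k)\ge\WW(\BG_k)$. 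This is the precise dual of the argument in (i), with the roles of the two path families (horizontal vs. vertical, equivalently the family and its conjugate) interchanged.

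The main obstacle, and the step I would be most careful about, is the bookkeeping in (ii): making rigorous the claim that "$\Bi$ pushes gaps backwards." Since $\Bi$ need not be a homeomorphism of $\D$, the image $\Bi(\BG'_k)$ need not be contained in, or contain, $\BG_k$ as a subset of $\T$ in any naive sense — $\Bi$ is only controlled on the boundary intervals $\BI'_n$. The clean way around this is to avoid boundary images altogether and work entirely with path families inside the disks: define $\WW(\BG_k)$ as the extremal length of the family of curves in $\D$ separating (the prime ends over) $\BG_k$ from the union of non-adjacent $\BI_j$'s, note that any such separating curve downstairs pulls back under $\Bi$ to a curve (or collection of subcurves) in $\D'$ that still separates $\BG'_k$ from the corresponding non-adjacent intervals $\BI'_j$ — this is exactly where the homeomorphism hypothesis on the $\BI_n$'s is used — and then the standard overflowing/length-area comparison gives the inequality in the stated direction. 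Once this separating-curve formulation is in place, both (i) and (ii) follow from the same one-line extremal-length estimate, differing only in whether we track the connecting family (decreasing) or the separating family (increasing).
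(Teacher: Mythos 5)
Your part (i) is fine and is essentially the paper's own route: it is a direct application of Lemma \ref{tranform rule for rectangles} (push the connecting curves of $\ol\BPi'_{mn}$ forward, using that $\Bi$ matches up the horizontal sides), followed by the harmless subtraction of the buffer constant. The gap is in part (ii). Your mechanism there is to \emph{pull back} the separating curves of the downstairs gap under $\Bi$, and although you flag this as the delicate point, the resolution you propose does not close it. For a holomorphic map that is neither injective, nor proper, nor a covering, a curve $\gamma$ in $\D$ has no preferred lift: $\Bi^{-1}(\gamma)$ may contain no curve at all (if $\gamma$ misses the image), and even when it does, nothing forces any component of it to separate $\BG_k'$ from the non-adjacent intervals, precisely because the only boundary data you control are the homeomorphisms $\BI_n'\ra \BI_n$ --- on the gaps $\Bi$ is completely uncontrolled. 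Moreover, even granting such a lift, the length--area bookkeeping would then require pushing an admissible metric forward, where multiplicity spoils the area estimate; the standard overflowing argument only runs in the direction of pushing curves forward from $\D'$ to $\D$ against a pulled-back metric.

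The fix --- which is the paper's actual proof and is a one-liner you never state --- is the reciprocity $\WW(\BG_k)=\WW(\BI_m,\BI_n)^{-1}$, where $\BI_m$ and $\BI_n$ are the two \emph{marked} intervals adjacent to the gap $\BG_k$: inside the quadrilateral with sides $\BI_m$, $\BG_k$, $\BI_n$ and the far arc, the family of paths from $\BG_k$ to the union of non-adjacent tiles is conjugate to the family joining $\BI_m$ to $\BI_n$. Since the adjacent pair consists of marked intervals, part (i) applies to it and gives $\WW(\BI_m',\BI_n')\leq \WW(\BI_m,\BI_n)$; taking reciprocals reverses the inequality and yields (ii). Equivalently, in your dual-family language: push forward (do not pull back) the curves of $\D'$ joining $\BI_m'$ to $\BI_n'$ --- each image joins $\BI_m$ to $\BI_n$, hence separates $\BG_k$ from the far arc --- so the extremal length of the separating family can only increase upstairs. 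Your instinct that (ii) is the conjugate of (i) is correct, but the implementation must route every curve comparison through the marked intervals and in the push-forward direction.
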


\begin{proof}
  The first estimate follows directly from 
Lemma \ref{tranform rule for rectangles} .  For the second one, observe that
that $\WW (\BG_k)= \WW (\BI_m, \BI_n )^{-1}$, where $\BI_m$ and
$\BI_n$ are the marked intervals adjacent to the gap $\BG_k$.  
\end{proof}

\subsection{Riemann surfaces with boundary}

\sss{Universal covering}

Let us now consider a compact Riemann surface $S$ with non-empty
boundary $\di S = \bigsqcup J_k$. We let $\pi: \D \ra \inter S$ be the 
universal covering, 
and we let $\La\subset \T$ be the limit set for the  group $\De$
of deck transformations.  Then $\pi$ extends continuously to $\T\sm
\La$, and
$$
    \pi:  \ol \D \sm \La \ra  S 
$$ 
is the  universal covering of $S$. Moreover, $\pi$ restricted to  any component of
$\T\sm \La$  gives us a universal covering of some component $J_k$ of
$\di S$.  Such a component of $\T\sm \La$ will be denoted $\BJ_k$
(usually we will need only one component for each $k$, 
so we will not use an extra label for it). 
The stabilizer of $\BJ_k$ in $\De$ is a cyclic groups generated by
a hyperbolic M\"obius transformation $T_k$ with fixed points in  $\di \BJ_k$.

\sss {Local weghts}

We let $\A_k$ be the covering annulus of $S$ corresponding to the
cyclic group $<T_k>$. We call its width $\WW(\A_k)$ 
the {\it local  weight} of $J_k$. We will also use notation $\WW(J_k)$
for this weight.  

 We let
$$
  \BW(S) = \sum \WW(J_k)
$$
be the {\it total weight} of $S$. 

% \begin{lem}
%    We have: $\WW(J_k) = \WW)\BJ_k + O(1) $.
% \end{lem}

\sss{Canonical arc diagram}
  Here we will summarize some notions and results  from \cite{K}. 

  An {\it arc}  $\alpha$ is a proper path in $S$ up to proper homotopy. 
Any arc connects some components $J_k$ and $J_i$ of $\di S$
and lifts to an arc $\ba$ in $\D$ connecting some intervals $\BJ_k$
and $\BJ_i$. We let  $\ol\WW(\alpha)= \ol\WW(\BJ_k, \BJ_i)$.
It is the width of the quadrilateral $\ol \BPi_\alpha=\ol \D$ with
horizontal sides $\BJ_k$ and $\BJ_i$.  
It is independent of the lift used. 

If $\ol\WW(\alpha)> 2$ then
by removing from $\ol\BPi_\alpha$ the square buffers we obtain a
rectangle $\BPi_\ba$.  In this case we let
$$
      \WW(\alpha) = \WW(\BPi_\ba)  = \ol \WW(\alpha)-2.
$$ 
If $\bar\WW(\alpha)\leq 2$ we let  $\WW(\alpha) =0$.

Non-degenerate quadrilaterals $\BPi_\alpha$ project injectively to 
disjoint  {\it canonical quadrilaterals} $\Pi_\alpha\subset S$.  
Each of them supports the canonical
vertical foliation. Altogether these foliations form the
{\it canonical lamination} of $S$. 

Arcs with positive weight form the {\it canonical arc diagram}
$\AAA_S$ on $S$.

% \sss{Thick surfaces}

\sss{Thin-thick decomposition}

\begin{thm}\label{thin-thick for S}
   Let $S$ be  a compact Riemann surface with boundary,
$\chi=\chi(S)$ be its Euler characteristic. Then  we have:
$$
    \BW(S)  -  O( | \chi | ) \leq   2 \sum_{\alpha\in \AAA_S}  \WW(\alpha)  \leq \BW(S). 
$$
\end{thm}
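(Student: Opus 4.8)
This is the Riemann-surface counterpart of Theorem~\ref{main estimate}, the Euler characteristic $\chi=\chi(S)$ taking over the role of the cardinality $p$; the two are linked by Gauss--Bonnet, since the convex core $S^0$ of $S$ has hyperbolic area $\area(S^0)=2\pi|\chi|$. The plan is to transfer the two inequalities separately: the upper bound by a disjointness argument in the covering annuli $\A_k$, and the lower bound by lifting to the universal cover and rerunning, over the geodesic boundary of $S^0$, the thin--cuspidal--thick partition that proves Theorem~\ref{main estimate}.

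\emph{Upper bound.} Fix a boundary component $J_k$ and pass to the covering annulus $\A_k$. For each $\alpha\in\AAA_S$ incident to $J_k$ the composition $\BPi_\ba\hookrightarrow\D\to\A_k\to S$ is injective (injectivity into $S$ is the defining property of the canonical quadrilateral), hence so is $\BPi_\ba\hookrightarrow\D\to\A_k$; its image $\tl\Pi_\alpha\subset\A_k$ is an embedded quadrilateral of width $\WW(\alpha)$ with one horizontal side on the boundary circle $J_k\subset\di\A_k$, whose vertical leaves cross $\A_k$. Since distinct canonical quadrilaterals are disjoint in $S$, all these quadrilaterals -- one per endpoint of each arc lying on $J_k$ -- are pairwise disjoint in $\A_k$, so their vertical foliations are disjoint subfamilies of the family crossing $\A_k$; superadditivity of $\LL$ (the mechanism behind Lemma~\ref{tranform rule for rectangles}) then bounds their total width by $\WW(\A_k)=\WW(J_k)$. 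As each arc contributes as many such quadrilaterals as it has endpoints on $J_k$, summing over $k$ yields $2\sum_{\alpha\in\AAA_S}\WW(\alpha)\le\BW(S)$.

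\emph{Lower bound.} Lift to the universal covering $\pi:\ol\D\sm\La\to S$ and fix a fundamental domain for the deck group $\De$ -- a hyperbolic polygon with $O(|\chi|)$ sides. The geometric picture of Theorem~\ref{main estimate} is reproduced with the boundary polygon $\BGa$ replaced by $\di S^0=\bigsqcup_k\delta_k$, where $\delta_k\simeq J_k$ is the image of the axis of $T_k$ and $\WW(J_k)=\WW(\A_k)$ is controlled up to $O(1)$ by $l_\hyp(\delta_k)$; by Lemma~\ref{hyp d vs length}, each $\WW(\alpha)$ equals, up to $O(1)$, the hyperbolic length of the stretch along which the two geodesic sides of $\BPi_\ba$ run $\eps$-close. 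Taking $\eps$ to be the Margulis constant, so that these thin stretches are pairwise disjoint, one gets $2\sum_\alpha\WW(\alpha)\ge l_\hyp(\Ga^\thin)-O(|\chi|)$, with $\Ga^\thin\subset\di S^0$ the corresponding $\eps$-thin part and the error absorbing the $O(|\chi|)$ arcs of positive weight. The packing argument of Proposition~\ref{thick part estimate}, now with $\area(S^0)=2\pi|\chi|$ in place of $\pi(p-2)$, bounds the complementary thick part $\Ga^\thick$ by $O(|\chi|)$, so the right-hand side is $\ge l_\hyp(\di S^0\sm\Ga^\cusp)-O(|\chi|)$; discarding the cuspidal part $\Ga^\cusp$ via the Margulis constant as before leaves $\ge\sum_k l_\hyp(\delta_k^\eps)-O(|\chi|)$, which by Lemma~\ref{hyp d vs length} equals $\sum_k\WW(J_k)+O(|\chi|)=\BW(S)+O(|\chi|)$. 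Combining the estimates gives $\BW(S)-O(|\chi|)\le 2\sum_\alpha\WW(\alpha)$.

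\emph{Expected main obstacle.} The conceptual steps are faithful transcriptions of the disk case; the real work is turning the geometry of the universal cover -- which carries the Cantor-type limit set $\La$ rather than a finite ideal marking -- into a clean $O(|\chi|)$ error, i.e.\ setting up the \emph{uniform} thin--thick decomposition promised in the introduction. The two points I expect to dominate the argument are: (i) pinning down, up to $O(1)$ and in a way that survives summation over the $\delta_k$, the relation between the annular width $\WW(\A_k)$ and $l_\hyp(\delta_k)$, together with the matching ``thin part of $\delta_k$'' estimate and the correct identification of the cuspidal (collar-type) part to be discarded; and (ii) checking that the area/packing bound of Proposition~\ref{thick part estimate} still applies with the hyperbolic polygon replaced by $S^0$, whose geodesic boundary is not part of the net being measured. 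A feature absent from Theorem~\ref{main estimate} is that short \emph{interior} closed geodesics of $S$ create thin parts of their own, which one must verify are absorbed by the same Margulis mechanism and contribute only $O(|\chi|)$.
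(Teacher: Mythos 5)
Your argument is correct and essentially the paper's own: the upper bound comes from disjointness of the canonical quadrilaterals landing on each $J_k$, and the lower bound from lifting to the cover, identifying each $\WW(\alpha)$ up to $O(1)$ with the length of the $\eps$-thin stretch along which the boundary geodesics come close (with $\eps$ the Margulis constant, so these stretches are disjoint), and absorbing the thick part via the Gauss--Bonnet packing bound $O(\area)=O(|\chi|)$. The only cosmetic differences are that the paper uses the exact relation $\WW(J_k)=l_\hyp(\gamma_k)/\pi$ for the covering annuli (so no $O(1)$ slack there and no final appeal to Lemma~\ref{hyp d vs length} for the boundary weights), and, since the boundary components are closed circles rather than marked intervals, no cuspidal part arises and that step of your transcription is vacuous.
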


\begin{proof}
    For any component $J_k$ of $\di  S $, $\WW(J_k)\geq \sum
    \WW(\alpha)$, where the summation is taken over the arcs that land
    on $J_k$. This implies that right-hand side estimate. 

 To prove the other one, let us consider the simple closed geodesics
 $\gamma_k$ homotopic to the $J_k$. Then 
\begin{equation}\label{geod and annuli} 
        \WW_k =  \frac 1\pi\,   l_\hyp(\gamma_k) \quad \mathrm{so}\ 
\BW  (S) = \frac 1\pi \sum l_\hyp(\gamma_k). 
\end{equation}

Let $P$ be the Riemann surface bounded the geodesics $\gamma_k$. 
As in the proof of Proposition \ref{thick part estimate}, let $\eps$ be the Margulis
constant for $P$ (equal to the Margulis constant for the double of $P$).

Take some arc $\alpha$ connecting $J_k$ to $J_i$, and lift it to the
universal covering. We obtain an arc $\ba$ in $\D$ connecting $\BJ_k$
to $\BJ_i$. Moreover, the geodesics $\gamma_k$ and $\gamma_i$ lift to
the geodesics $\Bga_k$ and $\Bga_i$ in $\D$ sharing the endpoints with $\BJ_k$ and $\BJ_i$
respectively. By Lemma \ref{geod in quadr}, 
\begin{equation}\label{rectangle  and geod}   
   \WW(\alpha) \equiv \WW(\ba) =\frac 1\pi\, l_\hyp (\Bga_{ki} ^\eps)
   + O(1)= \frac 1\pi \, l_\hyp (\gamma_{ki} ^\eps) +O(1),
\end{equation}
where $\gamma_{ki}^\eps$ are defined on $S$ in the same way as the
$\Bga_{ki}^\eps$ are defined in $\D$, and in fact, the latter is the
lift of the former.  

Let 
$$
   \gamma_k^\thick: =  \gamma_k \sm \bigsqcup_i \gamma_{ki}^\eps. 
$$
Then the Gauss-Bonnet  argument (as in Proposition \ref{thick part estimate}) shows that 
$$
    \sum l_\hyp ( \gamma_k^\thick ) = O(\area (P) ) = O(|\chi | ).
$$
Putting this together with (\ref{geod and annuli}) and (\ref{rectangle  and geod}), we obtain the
desired. 
\end{proof}

% \bignote{``Marked Riemann surfaces'' are hidden here} 

\subsection{Marked Riemann surfaces with boundary}
\note{need?}

Two particular cases that we have considered above can be unified in
one frame. 

\sss{Boundary marking}
Assume now that some boundary components $J^k$ of $S$ are   
partitioned into several intervals, 
$\II^k= (\BI_1^k, \dots, \BI_{p_k}^k)$.
If some component $J^k$ is not
partitioned, we let $J^k\equiv I^k_1$  and  $p_k=1$.
% In this case we can still refer to $J^k$ as an ``interval''.
% We will also use one-index labeling $I_n$ if it does not lead to a confusion.  
We say that $S$ is endowed with  the boundary marking $\II=\bigsqcup I^k$.
We let $|\II| = \sum p_k$ be the {\it cardinality} of the marking.  
 
An {\it arc} $\alpha $ on a marked surface is a path connecting some
{\it non-adjacent }  marked 1-manifolds $I^k_m$ and $I^j_n$, %  (that could be circles),
up to proper homotopy.  Lifting $\alpha $ to the universal covering $\D$,
we obtain an arc $\Bsi$ connecting two ideal intervals $\BJ^k_n$ and
$\BJ^i_m$.
Then we define the weight of $\alpha$ as

$$
   \WW(\alpha)= \WW (\BJ^k_n, \BJ^j_m). 
$$

Arcs with positive weights form a weighted arc diagram $\AAA(S, \II)$
of the marked surface.

Let us also define {\it local weights} $\WW( J^k_n)$.
If $J^k_n$ is a circle then the definition is the same is in the
unmarked case. Otherwise we define it as the width of the recatnge in
the universal covering with horizontal sides $\BJ^k_n$ and the
complement in $\T$ of $\BJ^k_n$ together with two adjacent lifted intervals. 

The {\it total weight} of a marked surface is defined as
$$
   \BW(S, \II) = \sum_\II \WW( J^k_n ).
$$

\sss{Thin-thick decomposition}

\begin{thm}\label{thin-thick for marked surfaces}
    Let $(S, \II)$ be a marked  compact Riemann surface with bundary,
$\chi=\chi(S)$ be its Euler characteristic. Then
$$
  \BW(S, \II) - O( | \chi |  + |\II|) \leq 2\, \sum_{\AAA(S, \II) } \WW(\alpha ) \leq
  \BW(S, \II).
$$   
\end{thm}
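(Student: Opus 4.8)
The plan is to combine the two special cases already proven—Theorem~\ref{main estimate} for a fully marked hyperbolic plane and Theorem~\ref{thin-thick for S} for an unmarked bordered surface—into a single argument that runs on the universal cover of $(S,\II)$. The upper bound is immediate and has essentially the same one-line proof as before: for each marked piece $J^k_n$ the canonical quadrilaterals $\Pi_\alpha$ for arcs $\alpha$ landing on $J^k_n$ are disjoint inside the covering annulus (resp.\ the covering rectangle) of $J^k_n$, so $\sum \WW(\alpha) \le \WW(J^k_n)$; summing over all marked pieces and noting each $\WW(\alpha)$ is counted twice yields $2\sum_{\AAA(S,\II)} \WW(\alpha) \le \BW(S,\II)$.

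For the lower bound, first I would lift everything to the universal cover $\pi\colon \ol\D \sm \La \to S$. For each boundary circle $J^k$ that carries a partition $\II^k = (\BI^k_1,\dots,\BI^k_{p_k})$, pick a lifted interval $\BJ^k \subset \T\sm\La$; its stabilizer $\langle T_k\rangle$ acts on $\BJ^k$, and the partition of $J^k$ pulls back to a $T_k$-periodic ideal marking of $\BJ^k$. Now replace the convex core $P$ of $S$ by the surface bounded by the geodesics $\gamma_k$ homotopic to the $J_k$, as in the proof of Theorem~\ref{thin-thick for S}, and take $\eps$ to be the Margulis constant for (the double of) $P$. Using Lemma~\ref{geod in quadr} via formula (\ref{rectangle  and geod}), each arc weight $\WW(\alpha)$ between non-adjacent marked pieces equals $\tfrac1\pi l_\hyp(\gamma^\eps_{ki}) + O(1)$, where the $\gamma^\eps_{ki}$ now run along the geodesics $\gamma_k$ but are indexed by the marked pieces on each $\gamma_k$. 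The thin parts $\gamma^\eps_{ki}$ are pairwise disjoint (Margulis), so $2\sum \WW(\alpha) \ge \sum l_\hyp(\gamma^\thin_k) - O(|\II|)$, where the $O(|\II|)$ absorbs the $O(1)$ per arc and there are $O(|\II|)$ arcs with positive weight.

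The two error contributions then come from two sources. The Gauss--Bonnet / area argument of Proposition~\ref{thick part estimate}, applied to $P$, bounds $\sum l_\hyp(\gamma^\thick_k) = O(\area(P)) = O(|\chi|)$: the hyperbolic area of $P$ is bounded in terms of $\chi(S)$. The cuspidal contribution—the parts of each $\gamma_k$ lying within the Margulis neighborhood of a ``short'' geodesic joining endpoints of adjacent triples of marked intervals on the same boundary circle, exactly as in the last paragraph of the proof of Theorem~\ref{main estimate}—is $O(p_k)$ per circle, hence $O(|\II|)$ in total. Putting a partition $\gamma_k = \gamma^\thin_k \cup \gamma^\cusp_k \cup \gamma^\thick_k$ for each circle and invoking Lemma~\ref{hyp d vs length} (i.e.\ $l_\hyp(\gamma^{\eps}_{k,n}) = \WW(J^k_n) + O(1)$ for the restriction $\gamma^\eps_{k,n}$ of $\gamma_k$ to the complement of its cuspidal part) gives $2\sum\WW(\alpha) \ge \sum_\II \WW(J^k_n) - O(|\chi| + |\II|) = \BW(S,\II) - O(|\chi|+|\II|)$.

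The main obstacle, and the place where some care is genuinely needed, is the bookkeeping of \emph{adjacency} across the two settings: on a single marked circle $J^k$ the notion of ``adjacent marked pieces'' is the cyclic $\Z/p_k\Z$ adjacency from Theorem~\ref{main estimate}, while pieces on distinct boundary components are never adjacent, and one must check that the local weight $\WW(J^k_n)$ as defined (width of the rectangle whose far side is the complement of $\BJ^k_n$ together with its two neighbors) is correctly recovered as $\tfrac1\pi l_\hyp$ of the appropriate sub-arc of $\gamma_k$ with its cuspidal ends removed—this is the marked analogue of (\ref{geod and annuli}) and needs the observation that $\WW(\BG)=\WW(\BI_m,\BI_n)^{-1}$ from Lemma~\ref{transform rules on D} to match gaps with the geodesic picture. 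Once the adjacency conventions on each $\gamma_k$ are set up to mirror the $\Z/p_k\Z$ structure of Theorem~\ref{main estimate}, both earlier proofs run in parallel on the respective boundary curves and the two error terms $O(|\chi|)$ (area of $P$) and $O(|\II|)$ (cusps plus per-arc $O(1)$) simply add.
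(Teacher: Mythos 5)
Your proposal is correct and is essentially the paper's own argument: the paper proves this theorem simply by declaring it a ``combination of the above two arguments,'' i.e.\ the proofs of Theorem~\ref{main estimate} and Theorem~\ref{thin-thick for S}, which is exactly the combination you carry out (disjointness of canonical quadrilaterals for the upper bound; geodesics homotopic to the boundary, Margulis thin parts, Gauss--Bonnet for the $O(|\chi|)$ thick part, and cuspidal pieces at adjacent marked intervals for the $O(|\II|)$ term for the lower bound). Your explicit bookkeeping of adjacency and of the marked local weights only fills in details the paper leaves implicit.
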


\note{Use total weight notation}

\begin{proof}
  Combination of the above two arguments. 
\end{proof}

\sss{Proper domains}
% \sss{Polygons}
Let $S$ be  a compact Riemann surface $S$ with  boundary, and 
let $D$ be a   % simply connected
domain in $S$ (defined up to homotopy) bounded (rel $S$) by several
ideal intervals  $\tl J_n\subset  J_n $ and several arcs $G_j$ (called {\it edges}).
% We will call $D$  a {\it proper polygon}.
 We  call $D$ a {\it proper domain}.)
 
% Let $\BD$ be a lift of $D$ to the universal covering $\D$.
% Its ``horizontal'' sides  are contained in some
% intervals $\BJ^n\subset \T$ that cover $J_n$, which determines a
% partial marking of $\T$. Gaps $\BG_k$ of this marking correspond to
% the edges $G_k$ of $D$.   

Let $\BD$ be 
 the covering of $S$ corresponding to  $D$. It is  a partially marked
 Riemann surface $(\BD, \II)$  with  marked intervals  $\BJ^k_n$
 covering the $J_n$,
 together with other components of $\di D$.
%%%  and gaps $\BG_j$ covering (with degree $1$)  the $G_j$. 

\bignote{Pay attention to  the outer boundary}
 
This determines local weights $\WW(\tl J_n) \equiv \WW(J_n) : = \WW (\BJ_n)$,
{\it  fluxes} $\WW^\perp (G_j):= \WW(\BG_j)$, and the total weight of $D$:
$$
   \BW(D)  : = \sum \WW (J_n) + \sum \WW^\perp (G_j).  
$$

% We can also define $\BW^{gap} (D)$ by .....

\msk
We can also consider arcs $\alpha$ in $D$
connecting various pairs of intervals $\tl J_n$ and $\tl J_m$.
Letting   $\WW(\alpha):= \WW(\ba)$, 
where $\ba$ is the  lift of $\alpha$ to $\BD$, we obtain the weighted arc
diagram $\AAA_D$.

Along with arcs, we can consider {\it segments} $\si$ connecting two
edges $G_j $ and $G_i$.  Each segment is lifted to an arc $\Bsi$ in $\BD$
connecting the corresponding  gaps  $\BG_j$ and  $\BG_i$. 
Letting $\WW(\si) := \WW(\Bsi)$, we obtain  the  {\it weighted segment diagram}
$\SSS_D$. 

We  can also consider {\it arc-segments} in $D$ connecting intervals
$\tl J_n$ to  gaps $G_j$, and to define their weights.
They form the  {\it weighted  arc-segment diagram} $\AS_D$. 
Altogether,
arcs, segments and arc-segments form the  {\it weighted  arc-segment diagram}
$\SAS_D$. 

\ssk
Summing  up the  weights of arcs, segments, or arc-segments,
we obtain the {\em total weights} of the corresponding diagrams,
$\BW(\AAA_D)$, $\BW (\SSS_D)$, and $\BW(\AS_D)$.
{We will use the same notation, $\BW(\QQ)$, for any famly of
 arc-segments.)  
Theorem \ref{main estimate}
% \ref{thin-thick for marked surfaces}
implies:

 \begin{cor}\label{thin-thick for domains}
  Let  $D\subset S$ be  a proper $p$-gon as 
   Then  
 $$
      \BW(D) - O( p)  \leq   2\, \sum_{\AS_D}  \WW(\si)    \leq  \BW(D). 
 $$
 \end{cor}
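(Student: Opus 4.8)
The plan is to reduce Corollary \ref{thin-thick for domains} to Theorem \ref{main estimate} (and its companions, Theorems \ref{thin-thick for S} and \ref{thin-thick for marked surfaces}) by passing to the partially marked covering $\BD$ of $S$ associated to the proper domain $D$. The point is that, by the very definitions of the previous subsection, \emph{all} the quantities appearing in the statement have been set up to be read off from $\BD$: the local weights $\WW(\tl J_n) = \WW(\BJ_n)$, the fluxes $\WW^\perp(G_j) = \WW(\BG_j)$, and the weights $\WW(\si)$ of arc-segments (whether arcs, segments, or arc-segments) are, by construction, the weights of the corresponding lifted objects in $\BD$ computed in the full ideal marking obtained by adjoining the gaps to the marked intervals. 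So the total weight $\BW(D) = \sum \WW(\BJ_n) + \sum \WW(\BG_k)$ is exactly the total weight of the partially marked disk (or, more precisely, surface) $\BD$ in the sense of the ``local weights and fluxes for a partially marked plane'' subsection, and $\sum_{\SAS_D} \WW(\si)$ is exactly the sum over all non-ordered non-adjacent pairs of the full marking of $\BD$.

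The key steps, in order, are as follows. First I would unwind the definitions to identify $\BW(D)$ with the total weight of the full ideal marking $\II \sqcup \{\BG_k\}$ of the covering $\BD$: each marked interval $\BJ_n$ contributes its local weight $\WW_n$ and each gap $\BG_k$ contributes $\WW_k$, which by Lemma \ref{transform rules on D}(ii)'s proof equals the flux $\WW^\perp(G_k) = \WW(\BG_k)$. Second, I would observe that an arc-segment $\si$ in $D$ — connecting two of the objects $\tl J_n$ or $G_j$ — lifts to an arc $\Bsi$ in $\BD$ connecting two intervals of this full marking, and its weight $\WW(\si)$ is by definition $\WW(\Bsi)$; conversely every non-adjacent pair in the full marking of $\BD$ that projects to a legitimate arc-segment of $D$ is accounted for. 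Hence $\sum_{\SAS_D} \WW(\si) = \sum \WW\{\BI_n, \BI_m\}$ with the summation over all non-adjacent pairs of the full marking of $\BD$. Third, I would apply Theorem \ref{main estimate} (in the version appropriate to the topological type of $\BD$ — the disk case if $D$ is simply connected, otherwise Theorem \ref{thin-thick for marked surfaces}), noting that the cardinality $p$ of the full marking of $\BD$ is $O(p)$ where $p$ is the number of sides of the proper $p$-gon $D$, and the Euler characteristic is absorbed in $O(p)$ as well. This yields precisely
$$
   \BW(D) - O(p) \leq 2 \sum_{\SAS_D} \WW(\si) \leq \BW(D),
$$
which is the claimed inequality.

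The main obstacle I anticipate is bookkeeping rather than geometry: making sure the correspondence between arc-segments of $D$ and non-adjacent pairs of the full marking of $\BD$ is a genuine bijection after one accounts for adjacency. Two marked intervals or gaps that are adjacent along $\di \BD$ correspond to degenerate (or disallowed) arc-segments, and these are exactly the pairs excluded from the middle sum in Theorem \ref{main estimate}; one has to check that ``adjacent in the full marking of $\BD$'' matches ``adjacent as sides of the proper polygon $D$,'' so that no arc-segment is double-counted and none is omitted. A secondary point is that $D$ is defined only up to homotopy and may have several components of $\di \BD$ covering a single $J_n$ or several lifts of a single edge $G_j$; one must fix, as in the ``Universal covering'' subsection, one component $\BJ_n$ (resp. $\BG_j$) per boundary piece and check that the weights are independent of this choice, which follows from the stated invariance of $\WW(\alpha)$ under change of lift. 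Once this identification is nailed down the result is immediate from Theorem \ref{main estimate}.
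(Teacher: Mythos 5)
Your reduction is exactly the paper's intended argument: the corollary is stated there as a direct consequence of Theorem~\ref{main estimate}, applied to the full ideal marking of the covering $\BD$ (marked intervals plus gaps), with $\BW(D)$ and the lifted arc-segment weights matching by definition and the cardinality giving the $O(p)$ error. Your bookkeeping of adjacency and of the summation being over the full diagram $\SAS_D$ (which is clearly what the displayed sum is meant to range over) fills in the details correctly, so the proposal is correct and essentially identical to the paper's proof.
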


\comm{*****
In fact, for any arc $G$ in $S$  we can define the flux 
$\WW^\perp(G)$ through $G$ as follows. 
% by  letting $D$ be the surface $S$ cut along $G$.
Let $G$ land on boundary components $J_n$ and $J_m$.
Then it can be lifted to an arc $\BG$ in $\D$ landing on $\BJ_n$ and
$\BJ_m$.
This determines an ideal  quadrilateral with {\it vertical} sides $\BJ_n$
and $\BJ_m$. Its width is $\WW^\perp(G)$.  

Similarly, given two arcs $G_i$ and $G_j$ 
(that could be equal: $G_i=G_j$)
 connected by  a segment
$\si$ in $S$, we can define the weight $\WW(\si)$  as follows. 
% by considering a
% domain $D$ obtainded by cutting $S$ along $G_i$ and $G_j$. 
Let $G_i$ land on $J_n$ and $J_m$, while $G_j$ land
on $J_s$ and $J_t$. Then we can lift the whole configuration to $\bar\D$
marking four  $\BJ$-intervals on $\T$. The segment $\Bsi$ connects two
gaps in this configuration that should be viewed as the horizontal
sides of a quadrilaterlal. Then $\WW(\si)$ is the width of this quadrilateral.

Of course, we can deine the weight of a general arc-segment in $S$ as well. 

******* } 

\subsection {Transformation rules}

By passing to the Universal Covering (or to the annulus covering), 
transformation rules of  Lemma \ref{transform rules on D} readily
generalize from the disk to arbitrary marked Riemann surfaces: 

\begin{lem}\label{generaltransform rules}
Let $i: S' \ra S$ be a  holomorphic map between
two compact  Riemann surfaces with boundary.
Then 

\ssk\nin $ \mathrm{(i)} $  If a boundary component $J' $ of $S'$ is mapped
with degree $d$ to a boundary component $J $ of $S$ then 
$ \WW(J')\leq d\cdot \WW(J)$;

\ssk\nin $ \mathrm{(ii)} $  If a  gap  % segment
$G ' $ of $S'$ is mapped
to a   gap  % segment
$G $ of $S$ then $\WW^\perp (G')\geq \WW^\perp (G)- 2 $;

\bignote{Used to be a ``segment'' instead of a ``gap''.
  Also: Notation``$\si$'' is used for segments} 

\ssk\nin $\mathrm { (iii) } $
 If an arc $a ' $ of $S'$ is mapped
to an arc  $\alpha $ of $S$ then $\WW (\alpha')\leq \WW (\alpha)$;

\ssk 
In case of a covering map, all the above inequalities become
equalities (without  subtracting ``$2$'' in $\mathrm { (ii) } $). 
\end{lem}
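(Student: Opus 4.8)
The plan is to reduce everything to the disk case via the universal covering or the appropriate annulus covering, exactly as announced in the sentence preceding the statement, so that the three inequalities become consequences of Lemma~\ref{transform rules on D} together with the elementary covering-space bookkeeping of how arcs, gaps, and boundary circles lift. First I would fix the holomorphic map $i\colon S'\to S$ and lift it to a holomorphic map $\tilde i\colon \D'\to\D$ between the universal coverings that is equivariant with respect to the induced homomorphism of deck groups $\De'\to\De$; this is possible since holomorphic maps between hyperbolic Riemann surfaces lift. The limit sets $\La'\subset\T'$ and $\La\subset\T$ and the ideal intervals $\BJ'_k\subset\T'\sm\La'$, $\BJ_k\subset\T\sm\La$ covering the boundary pieces are then related by $\tilde i$ in the natural way, and one checks that $\tilde i$ carries $\BJ'$ into $\BJ$ (the closure of the lift of the image boundary piece), possibly wrapping around $d$ times when the boundary map has degree $d$.

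For part $\mathrm{(iii)}$, an arc $\alpha'$ on $S'$ connecting $\tilde J'_k$ to $\tilde J'_i$ lifts to an arc $\ba'$ in $\D'$ connecting $\BJ'_k$ to $\BJ'_i$; its image $i(\alpha')$ is an arc on $S$ connecting the corresponding boundary pieces, and $\tilde i(\ba')$ lifts it, joining $\BJ_k$ to $\BJ_i$. The quadrilateral with horizontal sides $\BJ'_k,\BJ'_i$ maps holomorphically, respecting horizontal sides, to the one with sides $\BJ_k,\BJ_i$, so Lemma~\ref{tranform rule for rectangles} (equivalently Lemma~\ref{transform rules on D}(i)) gives $\WW(\alpha')=\WW(\ba')\le\WW(\tilde i(\ba'))=\WW(\alpha)$. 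Part $\mathrm{(ii)}$ is dual: a gap $\BG'_j$ is a complementary interval, and $\WW^\perp$ of a gap is, as recorded in the proof of Lemma~\ref{transform rules on D}, the reciprocal of the width of the quadrilateral spanned by the two adjacent marked intervals. Under $\tilde i$ those two adjacent intervals on the $S'$ side map into (or onto a piece of) the adjacent intervals on the $S$ side, but the complementary gap can only grow, not shrink; the ``$-2$'' appears precisely because the width normalizations $\WW=\ol\WW-2$ on the two sides differ, and one tracks that $\ol\WW^\perp(G')\ge\ol\WW^\perp(G)$ whence $\WW^\perp(G')\ge\WW^\perp(G)-2$. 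Here I would use Lemma~\ref{transform rules on D}(ii) more or less verbatim after passing to the disk.

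For part $\mathrm{(i)}$, the local weight $\WW(J')$ is the width $\WW(\A'_k)$ of the annulus covering associated with the cyclic group $\langle T'_k\rangle$ generated by the hyperbolic element stabilizing $\BJ'_k$; if $J'$ maps with degree $d$ onto $J$, then $i$ conjugates $(T'_k)$ to $T_k^{\,d}$, so $\A'_k$ maps holomorphically to the $d$-fold cover $\A_k^{(d)}$ of $\A_k$, which has width $d\cdot\WW(\A_k)$. Since a holomorphic embedding of annuli (respecting core curves) cannot increase modulus, $\WW(\A'_k)\le d\cdot\WW(\A_k)$, i.e.\ $\WW(J')\le d\cdot\WW(J)$. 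In the case where $J'$ is a circle this is immediate; in the marked case where $J'_n$ is an interval one instead uses the rectangle description of the local weight from the marked-surface subsection and the transformation rule for rectangles. Finally, when $i$ is a covering map all the maps of quadrilaterals, gaps, and annuli constructed above are themselves conformal isomorphisms (the lift $\tilde i$ is an isometry of the hyperbolic planes), so every inequality is an equality; and since a conformal isomorphism of ideal quadrilaterals preserves $\ol\WW$ exactly, the normalization constants match and no ``$-2$'' is lost in $\mathrm{(ii)}$.

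The main obstacle I anticipate is the bookkeeping in part $\mathrm{(ii)}$: making precise the claim that under a holomorphic (non-covering) map a gap's flux can only increase, keeping careful track of which of the two width conventions ($\ol\WW$ versus $\WW=\ol\WW-2$) is in force on each side, and verifying that the discrepancy is exactly the additive constant $2$ rather than something larger; degenerate cases, where $\ol\WW^\perp(G)\le 2$ so $\WW^\perp(G)=0$, must be checked to make sure the stated inequality still holds trivially. The degree-$d$ wrapping in $\mathrm{(i)}$ also needs a clean statement at the level of the annulus coverings, but that is standard modulus-of-annulus monotonicity. Everything else is a mechanical transcription of Lemma~\ref{transform rules on D} through the covering $\pi$.
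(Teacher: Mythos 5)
Your overall route is exactly the paper's intended one: the paper offers no detailed proof beyond the remark that the rules follow ``by passing to the Universal Covering (or to the annulus covering)'' and citing Lemma~\ref{transform rules on D}, and your treatment of (iii) and of the gap statement (ii) is a faithful, somewhat more careful transcription of that reduction (your flagged ``$-2$'' bookkeeping in (ii) is indeed where the only real care is needed, and your plan for it is sound).

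There is, however, a genuine inconsistency in your argument for (i). You correctly set up the equivariant lift and the induced holomorphic map $\A'_k \ra \A^{(d)}_k$, where $\A^{(d)}_k=\D/\langle T_k^d\rangle$ is the $d$-fold cover of $\A_k$, and you correctly record that $\A^{(d)}_k$ has width $d\cdot\WW(\A_k)$ --- this uses the paper's convention (see (\ref{geod and annuli})) that the width of a covering annulus is $l_\hyp(\gamma_k)/\pi$, i.e.\ the \emph{reciprocal} of its modulus. But then you invoke ``a holomorphic map of annuli cannot increase modulus'' and conclude $\WW(\A'_k)\le d\cdot\WW(\A_k)$: with width $=1/\mod$, modulus monotonicity $\mod\A'_k\le\mod\A^{(d)}_k$ gives the \emph{opposite} inequality, $\WW(\A'_k)\ge d\cdot\WW(\A_k)$ (equivalently, by the Schwarz Lemma, $l_{S'}(\gamma'_{J'})\ge d\cdot l_S(\gamma_J)$). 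So in that one step you silently switch to treating width as modulus, and the deduction does not follow. Note also that the inequality your setup actually proves, $\WW(J')\ge d\cdot\WW(J)$ for a general holomorphic map (with equality times $d$ for coverings), is the one the paper in fact uses downstream: combining it for the immersion $i$ with the covering equality is precisely what yields the chain $\WW_1\le\dots\le\WW_0\le 2\WW_1$ of Lemma~\ref{comparison of dyn loc w}, whereas the direction printed in the statement of (i) would give a contradiction there. So you should not try to force the printed direction out of modulus monotonicity; fix the convention (width $=1/\mod$), state the inequality your argument delivers, and the covering-case equality you describe then goes through as you wrote it.
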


% \begin{cor}
%  Under the circumstance of the above lemma, let $D'\subset S'$ and
%  $D\subset S$ be  proper domains such that $i(D')$ is properly
%  isotopic to $D$. Then 
% $$
%   \BW^{gap} (D') \geq \BW^{gap}(D) - O(p+|\chi|).
% $$ 
% \end{cor}

\bignote{Extension to arc diagrams is hidden here} 

\comm{*****
We can also naturally define the flux $\WW^\perp (X)$  through a weighted arc diagram 
$X= \sum x_i G_i$, namely
$$
         \WW^\perp (X) =\sum x_i \WW^\perp (G_i).
$$
By linearity, property (ii) of Lemma \ref{generaltransform rules}
extends to this more general  situation:

\begin{cor}\label{flux through arc diag}
  Under the circumstances of the above lemma,
if  a weighted  arc diagram $X ' $ of $S'$ is mapped by $i$
to a weighted  arc diagram   $X $ of $S$ then 
$$
  \WW^\perp (X')\geq \WW^\perp (X)-O(|X'|),
$$
with the equality (with no error term) in case of the covering map.
\end{cor}
*****************}

Finally, let us also consider the case of concatenation of several
arcs. By Lemma \ref{generaltransform rules} \note{specify} 
and  the Parallel Law,we have:

\begin{lem}\label{transform rule for concatenation}
Let $i: S' \ra S$ be a  holomorphic map between
two compact  Riemann surfaces with boundary.
Let $G_1', \dots , G_n'$ be a family of arcs on $S'$ 
whose concatenation through some imporper boundary components $J_k'$ 
forms an arc $G$ in $S$. Then
$$
   \sum \WW^\perp (G_k') + \sum \WW( J_k') \geq \WW^\perp(G) - O(n). 
$$
\end{lem}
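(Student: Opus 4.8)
The plan is to reduce the statement about a concatenated arc to the already-established transformation rules by working in a suitable covering surface and tracking widths of path families through a sequence of quadrilaterals. First I would lift the whole configuration to the covering surface $\BS'$ of $S'$ corresponding to the domain swept out by $G_1',\dots,G_n'$ together with the improper boundary pieces $J_k'$ along which they are concatenated; this produces a partially marked disk in which the concatenation $\Bsi = \Bsi_1' * \cdots * \Bsi_n'$ is a single arc connecting two gaps, one lying over each end of $G$, and where the intermediate markings $\BJ_k'$ sit between consecutive $\Bsi_j'$. Passing to the image $\BS$ in $S$, the map is holomorphic between partially marked disks, so Lemma \ref{transform rules on D}(i)–(ii) (equivalently their generalizations in Lemma \ref{generaltransform rules}) apply directly.

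The heart of the argument is then a Parallel Law computation in $\BD'$. The flux $\WW^\perp(\Bsi)$ through the concatenated arc is, by definition, the width of a path family crossing $\Bsi$; this path family decomposes into subfamilies crossing the individual pieces $\Bsi_j'$ and, in the intermediate regions, running parallel to the marked intervals $\BJ_k'$. By the Parallel (series) Law, the reciprocal widths add up with bounded error: the modulus of the long crossing rectangle is at most the sum of the moduli of the pieces, plus an $O(n)$ correction coming from the $n$ junction squares (the square buffers that were removed in passing from $\ol\BPi$ to $\BPi$, contributing an additive $O(1)$ per junction). In width (i.e. reciprocal-modulus) language this becomes precisely
$$
\sum \WW^\perp(G_k') + \sum \WW(J_k') \ \geq\ \WW^\perp(\Bsi) - O(n),
$$
and since holomorphicity can only decrease the left side when we push forward (fluxes through gaps do not increase under holomorphic maps, and weights of arcs/intervals behave as in Lemma \ref{generaltransform rules}), while $\WW^\perp(\Bsi)$ downstairs equals $\WW^\perp(G)$, we obtain the claim. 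The bookkeeping of which inequality in Lemma \ref{generaltransform rules} is applied at each junction — part (ii) for the gaps traversed by $\Bsi_j'$, the local-weight estimate for the marked $J_k'$ — is routine once the covering picture is set up.

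The main obstacle I expect is the careful setup of the covering $\BS'$ and the verification that the series decomposition of the crossing path family is geometrically legitimate: one must check that the quadrilateral $\ol\BPi_{\Bsi}$ really is subdivided by the intermediate markings $\BJ_k'$ into a chain of quadrilaterals $\ol\BPi_{\Bsi_j'}$ meeting along the buffer squares, with no path in the family avoiding this decomposition. This is where the Non-Intersection Principle and the disjointness of the canonical quadrilaterals (Lemma \ref{canon fol} and the discussion of canonical arc diagrams) do the work, exactly as the square-buffer removal was used in \cite{K}; once that is in place, the Parallel Law and the transformation rules give the estimate with the stated $O(n)$ error.
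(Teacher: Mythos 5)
Your proposal takes essentially the same route as the paper: the paper gives no detailed argument, introducing the lemma simply with ``By Lemma~\ref{generaltransform rules} and the Parallel Law,'' and your plan --- lift to the covering, decompose the family crossing $G$ into the subfamilies crossing the pieces $G_k'$ and the subfamilies absorbed at the junctions $J_k'$, then apply the Parallel Law together with the transformation rules, losing $O(1)$ per junction from the square buffers --- is precisely that one-liner fleshed out. One caution on wording only: in the paper's conventions the flux $\WW^\perp$ is itself the width of the crossing family, so it is the widths, not their reciprocals, that add here; your intermediate phrase about ``reciprocal widths'' is inconsistent with your own (correct) displayed inequality, but the decomposition and the $O(n)$ bookkeeping are the intended argument.
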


\sss{Key estimate} 

Let $S$ and $S'$ be two Riemann surfaces, and let $D\subset S$ and
$D'\subset S'$ be two proper domains, endowed with their
segment diagrams, $\SSS_D$ and $\SSS_{D'}$.
% immersed foliations $\FF$ and $\FF'$. 
%Note that the domains $D$ and $D'$ have
%the same cardinality $p$ of their markings $\II$ and $\II'$, 
%as well as they have the same Euler characteristic.

Let $i: S' \ra S $ be a holomorphic map such that $i(D')$ is properly
  isotopic to $D$.  Given any  segment $\sigma \in \SSS_D$, the pullback
  $i^*(\si)$ is a   concatenation of several  segments $\si_k'\in \SSS_{D'}$.  \note{picture?} 
  We say
  that $\si$ {\it breaks} under the pullback if $i^*(\si)$ comprises more
  than one segment. Let $\SSS_D^\br$ stand for the set of segments that break.     
 % Following our notational style, $\BW (\SSS_D^\br)$ means their total
%  weight. 

% Given any  leaf $L$ of $\FF$, the pullback
%  $i^*(L)$ is a   concatenation of several  segments in $D'$. We say
%  that $L$ {\it breakes} under the pullback if $i^*(L)$ comprises more
%  than one segment. Let $\FF^\br$ stand for the set of leaves that break.     
  
%As usual, the weights of all path families are measured in the
% Universal Covering. 

\begin{lem}\label{key estimate} 
  Under the above circumstances,  we have:
$$
     \BW (\SSS_D^\br) \leq \BW (\SSS_{D'}) - \BW (\SSS_D)  + O(p+|\chi|). 
$$
\end{lem}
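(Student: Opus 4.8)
The plan is to exploit the transformation rules of Lemma~\ref{generaltransform rules} together with the two-sided thin-thick estimate of Corollary~\ref{thin-thick for domains} applied to both $D$ and $D'$. First I would record the basic combinatorial picture: each segment $\si\in\SSS_D$ pulls back under $i$ to a concatenation of segments $\si'_1,\dots,\si'_{k(\si)}\in\SSS_{D'}$, glued through intervals lying on the ``improper'' boundary of $D'$; the segments that break are exactly those with $k(\si)\geq 2$. By Lemma~\ref{transform rule for concatenation} applied to this concatenation, for each $\si$ we get
$$
  \sum_{j=1}^{k(\si)} \WW(\si'_j) + \sum \WW(J') \;\geq\; \WW^\perp(\si) - O(k(\si)),
$$
where the middle sum is over the improper intervals through which the $\si'_j$ concatenate. (For a non-breaking segment the inequality $\WW(\si')\geq\WW(\si)$ is just part~(iii) of Lemma~\ref{generaltransform rules}, applied to a single arc, with no error term and no concatenation intervals.)

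Next I would sum this over all $\si\in\SSS_D$. On the left, the contributions $\sum_\si\sum_j\WW(\si'_j)$ are bounded above by $\BW(\SSS_{D'})$, since the pullback segments, taken over distinct $\si$, are pairwise non-homotopic segments in $D'$ and hence each appears with total coefficient at most its weight in $\SSS_{D'}$ — more carefully, by the disjointness of the canonical quadrilaterals in $\BD'$ one gets $\sum_\si\sum_j\WW(\si'_j)\le\BW(\SSS_{D'})$, exactly as in the upper estimate of Corollary~\ref{thin-thick for domains}. The concatenation intervals $J'$ all lie on boundary components of $D'$ coming from the ``internal'' edges of $D$ that $\si$ crosses, and their number (with multiplicity) is $O(p+|\chi|)$: each break corresponds to $i^{-1}$ of an edge of $D$ meeting $D'$, and the total number of such crossing points is controlled by the complexity $p$ of $D$ and the Euler characteristic of $S'$. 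Likewise $\sum_\si k(\si) = O(p+|\chi|)$. On the right, $\sum_\si\WW^\perp(\si)\ge\BW(\SSS_D)$ follows from the lower estimate in Corollary~\ref{thin-thick for domains} (this is where the $O(p)$ error from that corollary enters). Putting the pieces together,
$$
  \BW(\SSS_{D'}) + O(p+|\chi|) \;\geq\; \BW(\SSS_D) - O(p+|\chi|),
$$
and restricting the left-hand concatenation sum to the breaking segments only — the non-breaking ones contribute no concatenation intervals — isolates $\BW(\SSS_D^\br)$: the breaking segments' pullback pieces account for $\BW(\SSS_{D'})$ minus the weight carried by the images of non-breaking segments, which is at least $\BW(\SSS_D)-\BW(\SSS_D^\br)$ again by part~(iii) of Lemma~\ref{generaltransform rules}. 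Rearranging yields
$$
  \BW(\SSS_D^\br) \;\leq\; \BW(\SSS_{D'}) - \BW(\SSS_D) + O(p+|\chi|),
$$
which is the claim.

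The main obstacle I anticipate is the bookkeeping in the step that separates the breaking from the non-breaking segments and correctly attributes weight in $\SSS_{D'}$: one must verify that the pullback pieces of distinct segments of $\SSS_D$ do not overlap in a way that double-counts weight in $D'$ (which rests on injectivity of the canonical quadrilateral projection, hence on the Non-Intersection Principle applied in $\BD'$), and that the ``improper'' gluing intervals genuinely contribute only an $O(p+|\chi|)$ term rather than something proportional to total weight. This is the place where the cancellation of the large terms $\BW(\SSS_{D'})$ and $\BW(\SSS_D)$ must be made honest; the transformation inequalities themselves are cheap, but keeping all error terms genuinely of order $p+|\chi|$ — and not hiding a hidden dependence on the widths — is the delicate point.
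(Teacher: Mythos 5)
There is a genuine gap, and it sits exactly at the step you call ``rearranging''. The heart of the lemma is that the weight of a breaking segment is counted \emph{at least twice} in the pullback family $i^*(\SSS_D)$, and your route never produces that factor of two. For a breaking segment $\si$ you only invoke the concatenation inequality, which bounds the \emph{sum} $\sum_j\WW(\si'_j)$ from below by (roughly) one copy of $\WW(\si)$ minus the gluing weights; feeding this into your final bookkeeping gives $\BW(\SSS_{D'})\ \ge\ \bigl[\BW(\SSS_D)-\BW(\SSS_D^\br)\bigr]+\bigl[\BW(\SSS_D^\br)-\mathrm{error}\bigr]$, i.e.\ only the trivial estimate $\BW(\SSS_{D'})\ge\BW(\SSS_D)-O(p+|\chi|)$, from which $\BW(\SSS_D^\br)$ cannot be isolated. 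The missing ingredient — which you state only for non-breaking segments — is the per-piece transformation inequality: \emph{every} lift $\si_k'$ of $\si$ satisfies $\WW(\si_k')\ge\WW(\si)$, because each piece is a segment joining two gaps of $D'$ and gap-to-gap widths can only increase under the holomorphic map (the analogue of Lemma~\ref{transform rules on D}(ii)). Since a broken segment has at least two such pieces, $\BW(i^*(\SSS_D))\ge\BW(\SSS_D)+\BW(\SSS_D^\br)$, and combining this with the maximality of $\SSS_{D'}$ (your first step, which is essentially the paper's) gives the lemma. This is exactly how the paper argues.

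Two further problems with your error accounting. First, Lemma~\ref{transform rule for concatenation} concerns fluxes $\WW^\perp$ through arcs concatenated across improper boundary components, not segment weights; your displayed inequality mixes $\WW(\si'_j)$ on the left with $\WW^\perp(\si)$ on the right, so it is not the statement you need. Second, even granting a segment-weight analogue, the gluing terms $\sum\WW(J')$ are conformal widths of boundary components of $D'$, which in the regime of interest are of order $\WW_\loc$ and not $O(1)$; their \emph{number} is $O(p+|\chi|)$ but their total weight is not, so the error produced along your route is not of order $p+|\chi|$ — this is precisely the ``hidden dependence on the widths'' you flag at the end but do not resolve. The per-piece argument avoids both issues, since no gluing weights ever enter it.
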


\begin{proof}
  By Maximality of $\SSS_{D'} $ we have:
$$
\BW(\SSS_{D'}) \geq \BW  ( i^*(\SSS_D) ) - O(p+|\chi|)
  \geq \BW (\SSS_D)  + \BW(\SSS_D^\br) - O(p+|\chi|).
$$
The last estimate follows from the fact that    $\WW(\si') \geq \WW(\si)$
for any lift $ \si_k'\in \SSS_{D'}  $ of  a segment $\si\in \SSS_D$. Since
broken segments  admit at least two lifts, their weights are counted
at least twice in $i^* (\SSS_D) $, so
$ \WW(i^*(\SSS_D)) \geq \WW(\SSS_D)+\WW(\SSS_D^\br)$.  
\end{proof}

\comm{*****
\begin{lem}\label{key estimate} 
  Under the above circumstances,  we have:
$$
     \WW(\FF^\br) \leq \BW(D', \II') - \BW (D, \II)  + O(p+|\chi|). 
$$
\end{lem}

\begin{proof}
  By Maximality of $\FF'$, we have:
$$
   \BW(D', \II') \geq \WW( i^*(\FF) ) - O(p+|\chi|) \geq \BW (D, \II)
   + \WW(\FF^\br)
   - O(p+|\chi|).
$$
The last estimate follows from the fact that    $\WW(\si') \geq \WW(\si)$
for any lift $ \si' $ of  a segment $\si$ in $D$. Since
broken segments $\si$ admit at least two lifts, their weights counted
at least twice in $i^* (\FF)$, so
$ \WW(i^*(\FF)) \geq \WW(\FF)+\WW(\FF^\br)$.  
\end{proof}
************************} 

We can similarly define arcs and arc-segments in $D$ that break,
$\AAA_D^\br$ and $\AS_D^\br$.
% and their total weights,
% $\BW (\AAA_D^\br)$ and $\BW (\AAA_D^\br)$ .
Let us say that an arc in $D$  is {\em short} if it connects 
 two consecutive intervals, and {\em long} otherwise.
 Let $\AAA_D^\sh$ and $\AAA_D^\lo$ stand respectively for the
 familes of those arcs, and let $\AAA_D^\lobr$ stand for the
 family of long arcs that break.

 \begin{lem}\label{broken arc-segments} 
   Under the above circumstances, we have:

 %  \ssk\nin {\rm (i)}
$$
\BW(\AS_D^\br) \leq \BW (\SSS_{D'} ) \quad {\mathrm {and}} \quad
\BW(\AAA_D^\lobr) \leq \BW (\SSS_{D'} ) + \BW (\AS_{D'} )  .
$$
   
 \end{lem}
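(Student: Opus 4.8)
The plan is to pull back each broken arc-segment, resp.\ each broken long arc, of $D$ along $i$, to read off the pieces of $D'$ into which it decomposes, and to pay for its weight one piece at a time, directly from the transformation rules --- so, in contrast with Lemma~\ref{key estimate}, there will be no appeal to Maximality and no error term. The mechanism is the same one used in Lemma~\ref{key estimate}: a curve of $D$ that breaks acquires several lifts inside $D'$.

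First I would record the structure of the pullback. If $\sigma$ is an arc-segment of $D$ joining an interval $\tl J_n$ to an edge $G_j$, then $i^*(\sigma)$ is a concatenation of pieces of $D'$ through the edges of $D'$ it crosses; reading from the $\tl J_n$-end, the first piece is an arc-segment of $D'$ (from a lift of $\tl J_n$ to the first such edge) while every later piece --- in particular the terminal one, which lands on a lift of $G_j$ --- is a segment of $D'$. Hence a broken $\sigma$ produces at least one segment $\tau(\sigma)\in\SSS_{D'}$. Likewise, if $\alpha$ is a long arc of $D$ joining non-adjacent intervals $\tl J_n,\tl J_m$, then $i^*(\alpha)$ reads as an arc-segment of $D'$, then (possibly) some segments of $D'$, then an arc-segment of $D'$; so a broken $\alpha$ yields either exactly two pieces, both in $\AS_{D'}$, or at least three pieces, among which at least one lies in $\SSS_{D'}$. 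The long hypothesis enters precisely this combinatorial description.

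The weight comparison is the technical heart. Using Lemma~\ref{generaltransform rules} together with the Series/Parallel laws already exploited in Lemma~\ref{transform rule for concatenation}, I would verify that every segment piece $\tau$ of $i^*(\sigma)$ has $\WW(\tau)\ge\WW(\sigma)$ --- the very monotonicity invoked for segments in the proof of Lemma~\ref{key estimate}, since segment-type widths are governed by gaps, whose fluxes only grow under holomorphic maps by Lemma~\ref{generaltransform rules}(ii) --- and that in the two-piece case each of the two arc-segment pieces $\tau_0,\tau_1$ of $i^*(\alpha)$ has $\WW(\tau_0),\WW(\tau_1)\ge\WW(\alpha)$. The obstacle is exactly this bookkeeping: getting the direction of the inequality right for each of the three curve types and checking that no constant must be subtracted, as the statement is error-free.

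Finally I would assemble the counts, using that the pullbacks of disjoint curves of $D$ stay disjoint in $S'$, so the pieces produced by distinct curves are pairwise distinct elements of $\SSS_{D'}$ and $\AS_{D'}$. For the first inequality: to each $\sigma\in\AS_D^\br$ assign a segment piece $\tau(\sigma)\in\SSS_{D'}$ as above; then $\BW(\AS_D^\br)=\sum_\sigma\WW(\sigma)\le\sum_\sigma\WW(\tau(\sigma))\le\BW(\SSS_{D'})$. For the second: split $\AAA_D^\lobr$ according to whether $i^*(\alpha)$ has exactly two pieces or at least three. Each $\alpha$ of the second kind is charged to a distinct segment of $D'$ of no smaller weight, contributing at most $\BW(\SSS_{D'})$ in total; each $\alpha$ of the first kind satisfies $\WW(\alpha)\le\frac{1}{2}\bigl(\WW(\tau_0)+\WW(\tau_1)\bigr)$ with $\tau_0,\tau_1$ distinct arc-segments of $D'$, contributing at most $\frac{1}{2}\BW(\AS_{D'})$. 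Adding the two contributions gives $\BW(\AAA_D^\lobr)\le\BW(\SSS_{D'})+\BW(\AS_{D'})$.
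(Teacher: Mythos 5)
Your proposal follows essentially the same route as the paper's proof: you decompose the pullback of each broken curve into pieces, observe that every piece not touching a marked interval is a segment of $D'$ (so a broken arc-segment yields a segment piece, and a broken long arc yields either two arc-segment pieces or a ``middle'' segment piece), invoke the weight monotonicity of the pieces, and sum over the diagram. The paper's argument is exactly this, only terser (it charges each broken curve to a single piece of weight at least the original, rather than averaging the two arc-segment pieces), so your write-up is correct and matches the intended proof.
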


 \begin{proof}
The pullback $i^*(\alpha) $ of any arc-segment  $\alpha\in \AS_D^\br$
 consists of at least two pieces, one of which must be a
 segment in $D'$. Moreover, the weight of this segment is at least
 $\WW(\alpha)$. Hence
 $$
 \BW(\SSS_{D'} ) 
%  \sum_{\alpha\in \AS_D^\br} \WW  (i^*(\alpha) )
 \geq \sum_{\alpha\in \AS_D^\br} \WW (\alpha) = \BW(\AS^\br).  
 $$

 \ssk 
Similarly,    the pullback $i^*(\alpha)$  of any  long arc
$\alpha\in \AAA^\lobr $ that breaks consists of

\ssk\nin $\bullet$ either two pieces, one which must be an arc-segment
in $D'$;

\ssk\nin $\bullet$
or at least three pieces, one of which (a ``middle'' one)  must be a
segment in $D'$.

\ssk The second desird estimate follows. 
\end{proof}

\comm{****** 

\subsection{Closed curve diagram}

\sss{Definition}
Given a non-trivial and non-cuspidal  simple closed curve $\gamma$ on a Riemann surface $S$,
let  $\A_\gamma$ be  the covering annulus  corresponding to $\gamma$,,
and let  $\Bga$ be the closed geodesic homotopic to $\gamma$.
Then we let 
$$
    \WW(\gamma)= \mod \A_\gamma= \pi/  l_\hyp(\Bga).
$$

For $M$ sufficiently big (``4''), 
the $M$-{\it canonical curve diagram} $\CC$ is the family of simple
closed curves $\gamma$ with $\WW(\gamma) > M$. 

\begin{lem}\label{disjointness in CC}
  The curves is $\CC$ are pairwise disjoint.  
\end{lem} 

\sss{Transformation rules} 

\begin{lem}\label{tranform rules for CC}
  Let $i: S' \ra S$ be a holomorphic map between Riemann surfaces.
Let $\gamma\in \CC$ and $\gamma'\in \CC'$  be simple closed curves
such that $i(\gamma')$ is homologous to  $d\cdot[\gamma$. 
 Then $\WW(\gamma') \leq d^{-1} \cdot \WW(\gamma)$.
In the case when $i$ is a covering, the equality holds. 
\end{lem}

\begin{proof}
We can assume that $\gamma'$ is a closed geodesic.
By the Schwarz Lemma, 
$$
     l_\hyp(\gamma') \geq  l_\hyp (i(\gamma)) = d\cdot l_\hyp (\gamma),
$$
and the conclusion follows. 
\end{proof}

\sss{Flux vs weight}

\begin{lem} \label{flux vs weight}
  Let $\alpha $ be an arc, and let $\si$ be a segment with both ends on
  $\alpha$ that land on the opposite sides of $\alpha$ (the
  orientation preserving situation).  Let us consider a simple closed curve $\gamma$ obtained 
  by connecting the endpoints of $\si$ along $\alpha$. Then
$$
   \WW(\gamma) \geq \frac 12  \bar\WW (\si), 
$$  
as long as $\bar \WW(\si) > 2/\pi$.

\end{lem}

\begin{proof}
  Let $\alpha$ lenad on boundary components $I$ and $J$.
Then our configuration of arcs lifts to the universal covering $\D$ as
follows:

\begin{itemize}
\item $I$ lifts to two ideal arcs $\BI$ and $\BI'$; 
          $J$ lifts to two ideal arcs $\BJ$ and $\BJ'$

\item $\alpha$ lifts to two arcs: $\Bal$ connecting $\BI$ to $\BJ$,
  and $\Bal'$ connecting $\BI'$ to $\BJ'$. 

\item $\si$ lifts to an arc $\Bsi$ connecting $\Bal$ and $\Bal'$;

\item $\gamma$ lifts to aa axis $\Bga$ connecting the ideal gap $\BG$
  between $\BI$ and $\BJ$ to the ideal gap $\BG'$ between $\BI'$ and $\BJ'$
  (and separating $\BI$, $\BI'$ from $\BJ$, $\BJ'$.

\end{itemize} 

Let us uniformalize $\D$ be the horizontal strip of width $\pi$ so
that $\Bga$ becomes an axis of this strip. (We will keep the same  notatin 
for all the objects under consideration.) The deck transformation
$T_\gamma$
along this axis is a translation by some number $a$. Moreover,
$\WW(\gamma) = \pi/a$. 

The curves representing $\WW(\si)$ connect $\BG$ to $\BG'$,
so they go over the convex hulls  $[\BI, \BI']$ and $[\BJ, \BJ']$,
both of which have length greater than $a$. Hence these curves have
length at least $\min (\pi, a)$. 

Let us put the Euclidean metric on the union of two rectangles based upon the above
intervals
and going across the strip. Then the length of all curves with respect to this
metric   is bounded in the same way. The area of these metrics is
bounded by $2\pi a$. Hence   
$$
  \bar\WW(\si) \leq \frac {2\pi a} { \min (\pi, a)^2  } =\max \left(\frac{2\pi}
  a, \frac 2\pi \right) = 2 \max ( \WW(\gamma), 1/\pi). 
$$
The conclusion follows. 
\end{proof}

******* } 

\comm{***** 

\sss{Flux through the ray diagram}

For an arc $\alpha$ on $U'\sm \KK'$, the inclusion $i_*(\alpha)$ is well
defined if

\ssk\nin $\bullet$
$\alpha$ is horizontal with both ends landing on some $K_n$; 

 \ssk\nin $\bullet$
or $\alpha$ is vertical with one end landing on some $K_n$; in which case it admits a unique extension
to a vertcal arc $ i_*(\alpha)$ on $U$ by means of the homotopy
retraction $U\ra U'$. 

\msk
We say that a WAD $X'$ on $U'$ is {\it invariant} under $F$  if :

\ssk \nin $\bullet$ 
The inclusions $i_*(\alpha)$ are well defined for all $\alpha\in
\supp X'$; 

\ssk \nin $\bullet$ $f_*\alpha$ is an arc for any $\alpha\in \supp X'$
  (in other wirds $\alpha$ and $-\alpha$ do not cross); 

\ssk \nin $\bullet$   $i_*(X')= f_*(X')$.

\ssk\nin
We let $X=f_*(X')=i_*(X')$ be the image WAD on $U$. 

\msk
For instance, consider the {\it ray diagram}  $\RR'$ on $U'\sm \KK'$ supported on all external rays
landing on the little Julia sets $K_n$, $n =0,\dots, p-1$, with weights
$$
   r_n' =  (\# \ \mbox{ of external rays landing on $K_n )^{-1}$. }
$$

\begin{lem}\label{flux through inv WAD}
    Let $X'=\RR'$ be the  invariant ray daigram on 
$U'\sm f^{-1}    (\KK)$. 
Then
$$
  \WW^\perp (Y')\geq \WW^\perp (X') - O( p (1 +  \WW^\ver) ) 
% - o(p\, \WW_\loc) 
$$ 
for any   WAD $Y'$  such that  $i_* (Y') = i_* (X')$.
\end{lem}

\begin{proof}
Let  us decompose $i$ as $j\circ e$ where
$$
  e : U'\sm (\KK\cup \KK') \ra U'\sm \KK,\quad j: U' \sm \KK \ra U\sm \KK.
$$
We let $Z = e_*(X')$, so $j_*(Z) = X$.   Then
\begin{equation}\label{restricted X}
   \WW^\perp (X) \leq \WW^\perp (Z) + O(p (1 +  \WW^\ver) ). 
\end{equation}
Indeed,  for any proper path $\gamma$  in $U\sm \KK $  crossing $\alpha \in \supp X$, 
a lift $j^*(\gamma)$ that begins on a little Julia setis either
crosses $ j^*\alpha\in Z$  or is vertical
 (and the vertical weights
get summed up over the whole diagram $X$ containing $O( p) $ arcs). 

\comm{******
If it is vertical, three situations can occur:

\ssk\nin (i) $\gamma$ is well localized in the translation region
(meaning that its ``dives" into the Translation region withon a
bounded number of blocks). The total weight of such  

\ssk \nin (ii) $\gamma$ dives into the Translation region in a
non-localized way. The total weight of such arcs is
$O(p+\WW_{periph})$. 

\ssk \nin (iii) $\gamma$ exits the Translation region.
The total weight of such arcs is
$O(\WW_{periph})$. 

% \bignote{What to do with the peripheral error ?}
*********}

Since $i_*(Y') =X$, we have $e_*(Y') = Z$, and Corollary 
\ref{flux through arc diag} yields: 
$$
   \WW^\perp (Y') \geq \WW^\perp (Z) - O(p).
$$
Putting it together with (\ref{restricted X}), we obtain: 
$$
      \WW^\perp (Y') \geq \WW^\perp (X) - O(p(1+  \WW^\ver)) 
% - o(...).
$$
But since $f: U' \sm (f^{-1}\KK)\ra U\sm \KK$ is a covering map and
$f_* (X')= X$, 
we also have:
$$\WW^\perp (X') = \WW^\perp (X).$$ 
\end{proof}
   
\begin{rem}\label{refinement}
The above estimate comes through if we replace the Riemann surfaces
$S=U\sm \KK$ and $S'= U'\sm f^{-1} \KK$ with 
$$
    S^n  = U^n\sm f^{-n} \KK. \quad \mathrm{and} \quad   S^{n+1}   
$$
coming together
with the  covering-immersion pair $F =(f ,i) : S^{n+1} \ra S^n$, 
and two   arc diagrams,  $X^{n+1} =\RR^{n+1} $ and $Y^{n+1}$,  on $S^{n+1}$ 
such that $X^{n+1} $ is $F_n$-invariant and  $i_ * (Y^{n+1} ) = i_*(X^{n+1})$.   
The error term should be interpreted as the vertical weight on $S^{n+1}$. 
When we want to emphasize  where the weight is calculated, 
we use notation like $\WW(X; S)$. 
\end{rem}

Next, we will sharpen the error term in  the previous lemma.

\begin{lem}\label{flux through inv WAD-improved}
Fix an arbitrary small $\eps>0$ and a big $N$. 
Under the circumstances of the previous lemma
(adjusted as in the above Remark), 
either $W^\ver \geq N \WW^\loc$
or else
$$
  \WW^\perp (Y^n;\, S^n  )\geq 
\WW^\perp (X^n; \,    S^n) - \eps\, p\, \WW^\loc  
$$ 
for some $n = O( N/\eps) $. 
% (Here $U^n$ is the $n$ times restricted domain).
\end{lem}  

\begin{proof}
We will pursue the following strategy. 
  If for some restriction to  $S^n$, 
the newborn vertical saisfies $W^\ver (S^n) < \eps \WW^\loc$, 
then we obtain the desired estimate. Otherwise, we have the
definite (in terms of $\WW^\loc$) vertical weight created under each restriction. 
Under $\asymp N/\eps$ steps, we will create at least $N\WW^\loc$ of
the vertical weight.
Then we will push it forward by a baby version of the Covering Lemma.

Let  
$$
\Si^n : = U^n \sm f^{-(n-1) } \KK.
$$

%For $n\geq N$, let  
%$$
%S^n  : = U^n\sm f^{-N} \KK, \quad
%(S^{n+1})' : = U^{n+1} \sm f^{-(N+1) } \KK.
%$$

Let us consider the covering annulus $\A^n$ of $S^n$ corresponding to the loop
$\di U^n $, and let $\BBB^n $ be the similar annuli for the $\Si^n $.

Let  $\Pi^n$ be the universal coverings of the $\A^n$  realized as strips
(with the top $T^n  =\R+ h_n \pi $  covering the outer boundary $\di U^n$ and the
bottom $\R$).  We  select the heights $h_n$ so that $\A^n= \Pi^n/\Z$. 
Similarly, $P^n$ are the univesal coverins of the $\BB^n $. 

We mark external rays  $\hat \RR^n(i)$  on the annuli
and the strips within one fundamental region
(using the same notation in $\A^n$ and $\Pi^n$). 
They select for us  initervals $\hat K^n(i)$ on the bottom $\R$ corresponding to the little Julia
sets. By definition, the flux $\WW^\perp$ through $\RR^n(i)$ is
equal to the height of 
% width of the vertical  foliation $\FF^n$ in $\Pi^n$  through $\hat\RR^n(i)$ in 
the quadralateral $\Pi^n(i)$ whose horizontal sides are
the top of $\Pi^n$  and the interval $\hat K^n(i)$. 

\comm{*****
Let $I^n$ be the interval on the bottom $\R$ of $\Pi^n$ which is the
convex hull of the  the intervals $\hat K^n ( q-(n-1) ) $ and 
$\hat K^n (q- (n-3))$. 
Let $w^n $ be the  the width of the path family going from $\R$ to
$\R$ skipping over $I^n $
(i.e., the height 
 of the rectangle $\Pi^n$ with
horizontal sides $I^n$ and $T^n$). 
We claim that the $w^n$ are small.
% in terms of $N$ only. 

To see it, let us also consider the
% annulus $\A$ covering $U\sm \KK$, its universal covering $\Pi$.
% Then we have  the 
covering map $f_n :  \A^n \ra \A$ of degree $2^N$ and its linear
lift $\hat f_n :  \Pi_N\ra \Pi$. 
Under this map,  the interval $\hat f_n  ( I^n) $ 
covers (essentially with degree one) the whole bottom of $\A$, which implies that $w_n$ is bounded
by the height of  $\A$ plus $O(1)$ 
% the rectangle $\Pi$ with horizontal sides $1$ and $T$. 
Since this annulus is bad, we are done. \note{comment?} 
*****}

Let $k=[1/\eps]$, and let $n\leq kN$. 
Let us consider the foliation $\FF^n (i)$ representing the flux
through $\RR^n (i)$ in $\Pi^n $. 
Let us pull it back by $i^*$ to $\Pi^{n+1}$.  
A leaf $L$ can either be fully lifted to $\Pi^{n+1}$ or else it can be broken.
 It the former case,  it contributes to the flux through $\RR^{n+1}(i)$.
It the latter case, it has at least  two vertical lifts that begin on
the opposite side of $K^n(i)$.  
% If some leaf $L$ of $\FF^n$ fully lifts to $\Pi^{n+1}$ then all the leaves
%below it do the same. It follows that the family of ``broken leaves''
%$i^*(L)$ form a foliation near the top. 

So, the weight of the broken leaves is bounded by the vertical weight
of $A^{n+1}$.  If the total of this flux  is smaller then $\eps p  \, \WW^\loc$ then 
% $$
%    \sum_i     \WW(\FF^N(i) ) -  \WW(\FF^n(i) )  < \eps (n-N) p \, \WW^\loc.
%$$
%Hence for some $k\in (N,n]$ we would have
%$$
%     \sum_i  \WW(\FF^{k-1}  (i) ) -  \WW(\FF^k(i) )  < \eps p \, \WW^\loc,
%$$
we obtain  the desired error estimate. 

Otherwise, for all  $n\leq k N$, the weight of broken leaves of
$\FF^n(i)$ (for any $i$ in the ``middle''  the translation region) is at
least $\eps\WW_\loc$. 
 % $$
%  \sum_i     \WW(\FF^N(i) ) -  \WW(\FF^n(i) )  \geq  \eps p (n-N) \, \WW^\loc,
%$$
%so for some $i$. 
%$$
%    \WW(\FF^N(i) ) -  \WW(\FF^n(i) )  \geq  \eps  (n-N) \, \WW^\loc,
%$$
After $kN$ restrictions we create at least $N\WW_\loc$ of the vertical
weight.  

If it is localized outside the interval $I^n_{kN}$ in $\A^{kN}$ contianing the
last $kN$ sectors, 
%As we know, only small amount of $\FF^n(i) $  can skip over $I^n$,
%so most of this flux lands well inside the trnalation region (at least
%on its forward side). Assume good part of it actually lands outside
%$I^{kN}$.  
then there is  a standard rectangle $P$ in the translation
region but outside $I^n_{kN}$ whose width is at least as big  (up to a bounded additive
error).  Under the map 
$\hat f_{kN}: \Pi^{kN} \ra \Pi$ composed with the projection $\Pi\ra \A$, 
this rectangle is mapped with degree one to $\A$.
Hence  $A$ has at least as big width.  
%
%By the above consideration, $\WW (\A^N)$ is at least $\asymp N\WW^\loc$.
%Moreover, a definite part of the newly created vertical weight is
%localized on a vertical rectangle $\Pi$ in $\A^N$ such that $f^N$ is
%injective on its horizontal side (since it can be selected well inside
%the translation region). By the Covering Lemma (a baby version), 
%this weight can be pushed forward to $\A$.   

Otherwise we would create a big vertical weight in $\A^n$ landing on
$I^n_{kN}$.  Let us consider a disk $D$ containg  $kdN/2$ little Julia
sets (in the $kN/2$ consecutive sectors) aligned with the Hubbard
marking.  Let $\mathbf D$ be the corresponding psuedo-disk. 
The above vertical weight blocks the canonical arc diagram in $\mathbf
D$ to skip over several little Julia sets in the same translation
orbit from ``above''. 
 Neither an arccan be a ``serpant'' winding above these little Julia
 sets
since it would create a self-intersection. So, any arc can leave
$\mathbf D$ only through $\RR(0)$ or  by going ``under'' all little Julia sets,

In either case, its push-forward
% would  connect the next little Juli set to $\RR(1)$ and 
would  intersect itself. 
\end{proof}

\sss{Cobras}

Let us begin with a non-dynamical situation. 
Let us mark an (oriented)  arc $\alpha$  in $U\sm \KK$ connecting a
boundary component  $I$ to a component $J$.  
A  segment $\si$ in $(U, \alpha)$  is called a {\it cobra}
(attached to  $\alpha$) if it  begins and  ends  on $\alpha$ and is
homotopically trivial in $U\sm (I \cup J \cup \alpha)$ (where its endpoints
are allowed to slide along $\alpha$).  
It is oriented so that its end is ahead of the tail (on $\alpha$).  
 Its weight
$\WW(\si)$ is the flux through $\alpha$ along $\si$.  \note{defined?}

 Let us consider a twisted arc $\alpha^\si$ on $U\sm \KK$  obtained by adding to $\alpha$
 the  segment $\si$, while erasing the corresponding piece of $\alpha$.  \note{picture}
The following lemma shows that this operation leads to a loss of the
flux (as long as the weight of the cobra is sufficiently big). 

\begin{lem}\label{loss of cobra}
If $\WW(\si) > 0 $ then 
$$
 \WW^\perp (\alpha^\si)  \leq  \WW^\perp (\alpha) - \WW( \si ) + O(1).
$$
\end{lem}

\begin{proof}
   Let us lif the segment $\alpha$ to a segment $\ba$ connecting some
   ideal intervals $\BI$ and $\BJ$ (lifts of of the corresponding
   boundary components). 
   Let $\Bsi$ be the lift of the cobra that begins on $\ba$. It
   connects $\ba$ to another lift $\ba_1$ of $\alpha$.  
  The latter begins on some ideal interval  $\BI_1$ (another lift of $I$)  and ends on some
  $\BJ_1$.

Assume for definiteness that the cobra $\si$ is negatively oriented
with respect to $\alpha$ (i.e.,  we turn right from $\alpha$ to
$\si$). Then the ideal intervals under consideration appear on the
circle $\T$ in the following cyclic order: $(\BI, \BJ_1, \BI_1, \BJ
)$. Let us consider the canonical arc-segment diagram of this partially marked
disk. By our assumption, it contains the arc $\Bsi$ connecting the gaps 
$(\BJ, \BI)$ and $(\BJ_1, \BI_1)$. Besides, it may contain four segments $\Bga(\BI), \dots,
\Bga(\BJ)$ connecting the gaps attached to the corresponding intervals
$\BI$ etc., and  some arc-segments begining on these
intervals (altogether there are at most five arc-segments with
positive weight).   

 Let us first assume that all the latter arc-segments have zero weight.
 Then we have:
$$
   \WW^\perp(\ba) = \WW(\Bsi) + \WW(\Bga(\BI)) + \WW(\Bga(\BJ)) + O(1),  
$$    
$$
   \WW^\perp(\ba^\si) = \WW(\Bga(\BI))  + \WW(\Bga(\BJ_1)) +O(1). 
$$      
So, it is enough to show that
\begin{equation}\label{desired estimate} 
   \WW(\Bga(\BJ_1)) \leq \WW(\Bga(\BJ) ).
\end{equation} 

Let $T$ be the deck transformation moving $\ba_1$ to $\ba$. 
It moves the configuration $\CC:=(\BI, \BJ_1, \BI_1, \BJ)$ to 
$T(\CC)= (\BJ, \BI,  T(\BJ),   T(\BI) ) $ (up to cyclic permutation).
Let $L$ be the gap  between $T(\BI)$ and $\BJ$, and let $\Bde$ be the
segment of $T(\CC)$  connecting $L$ to  the gap $(\BJ, \BI)$. 
Then 
$$
  \WW(\Bga(\BJ_1)) = \WW(\Bde).  
$$
Moreover,  the  fixed points $q^u$ and $q^s$ of $T$ lie in the gap $(\BI_1, \BJ)$,
(with $q^s$ ahead of $q^u$). \note{to be justified}
It follows that $T(\BI)$ lies in  between $q^s$ and $\BJ$,
and thus  $L\subset (\BI_1, \BJ)$.
Hence $\WW(\Bde) \leq \WW(\Bga(\BJ) )$, and 
the desired estimate (\ref{desired estimate}) follows. 

\msk In the general case, there could also be the arc-segment $\Bom$ connecting
$\BI$ to the gap $(\BJ_1, \BI_1)$ that intersects $\Bsi$ but not
$\ba$. 
But then $T(\Bom)$ goes from $T(\BI)$ to $(\BJ, \BI)$  contributing
the same flux through $\ba$.
\end{proof}

 Let us now consider  a family $\SSS$ of disjoint cobras $\si_i$ with
 the same (say, negaive) orientation attached to
 some arc $\alpha$. Let $D_i$ be the Jordan disk bounded by $\si_i$ and
 the segment of $\alpha$ sharing the endpoints with $\si_i$.   
Any pair of these disks are  either nested or disjoint,
so we can talk about  innermost and outermost cobras. 
The outemost cobras form a  family with disjoint $D_i$.
Let us define the twisted arc $\alpha^\SSS$ 
by twisting $\alpha$ simultaneously  along the outermost cobras. 

We define $\WW(S)$ as the total weigh of the cobras in $\SSS$. 

\begin{cor}\label{loss of several cobras}
Let  $\SSS$ be a family of negative cobras of positive weight attached to some arc $\alpha$. Then 
$$
 \WW^\perp (\alpha^\SSS)  \leq  \WW^\perp (\alpha) - \WW( \SSS ) + O(p).
$$
\end{cor}

\begin{proof}
  The twisted arc $\alpha^\SSS$ can also be obtained by twisting
  $\alpha$ consecutively along the innermost cobras. In this way we
  will make use of all the cobras of $\SSS$, 
loosing each time the weight of the cobra, up to a bounded additive
constant
(Lemma \ref{loss of cobra}). Since there exists at most $O(p)$
disjoint cobras, the conclusion follows.  
\end{proof}

More generally, we can consider a disjoint family $\SSS$  of  cobras
with the same orientation attached to
a WAD $X$, and twist $X$ along this family.  
It immediately follows that 
\begin{equation}\label{cobras attached to diag}
    \WW^\perp(X^\SSS) \leq \WW^\perp (X) - \WW(\SSS) + O(p), 
\end{equation}
where $\WW(\SSS)$ should be computed with the corresponding weights. 

Let us now pass to a dynamical setting. 
Let $X$ be an invariant WAD in $U'\sm (\KK \cup \KK')$.
We say that the cobra $\si$  attched to $X$  is {\it tame} if $i_*(\si)$ is a trivial
segment on $U'\sm \KK$
(so, $\si$  surrounds only isles of $\KK'$). 

\begin{lem}\label{tame cobras}
Fix a big $N$.
Let $X$ be an invariant WAD, 
% with $\WW^\perp (X') \geq C\, |X'|$. Then
and let $ \SSS $  be a family  of disjoint  tame cobras $\si_i$
attached to $X$.  
If $\WW^\loc$ is sufficiently big then either $\WW^\ver\geq N \WW^\loc$ or else
$$
      \WW (\SSS)   = o ( p \WW^\ver ).   
$$
\end{lem} 

\begin{proof}
We can consider separately negatively oriented and positively oriented
families. In either case,  the twisted WAD $X^\SSS$  is  well defined. 
By (\ref{cobras attached to diag}),  
 we loose the total (weighted) flux of these cobras.
%$$
% \WW^\perp (Y')  = \WW^\perp (X') - \WW({\SSS} )  
% \leq (1-\eps) \WW(X')
%$$
% (assuming the desired estimate fails). 
On the other hand, since the cobras are tame, we have $i_*(X^\SSS) =
  i_*(X)$, so by Lemma \ref{flux through inv WAD-improved}
$$
     \WW^\perp (X^\SSS) \geq \WW^\perp (X) -  o ( p  \WW^\loc).
$$
and the conclusion follows. 
%Comparing this with the previous formula, we obtain the desired.
% $$   
%     \WW^\perp (X')  \leq \frac A\eps\,  |X'|, 
%$$
% which contradicts our assumption as long as $C> A/\eps$. 
\end{proof}

\comm{****
\sss{Generalized cobras} 
We will need  more general creatures:
a {\it generalized cobra} (attached to $X'$)  is a segment $\si'$ in
$U'$  with endpoints on two adjacent edges $\RR_1$ and $\RR_2$  of
$X'$ (so, both $\RR_i$ land on the same little Julia set $J_n$).
A generalized cobra is tame if in the region bounded by $\si \cup
\RR_1\cup \RR_2 \cup J_n$ (where $J_n$ is understood as a piece of
the ideal boundary  of $U'$) 
there are no other little Julia sets $J_k$.   
Lemma~\ref{tame cobras} has the following generalized version': 

\begin{lem}\label{tame generalized cobras}
Let $X'$ be an invariant WAD, 
% with $\WW^\perp (X') \geq C\, |X'|$. Then
and let $ \SSS $  be a family  of disjoint  tame generalized cobras $\si_i$
attached to $X$.  Then
$$
      \WW (\SSS)   = O( |X'| + p\, \WW^\ver + |\SSS|\, \WW_\loc ) .  
$$
\end{lem} 

\begin{proof}
Proceed as in  Lemma~\ref{tame cobras}: 
 consider a WAD $Y'$ on $U'$  obtained by adding to $X'$
   segments $\si_i'$, while erasing the corresponding pieces of $X'$.  
   Then:
$$
 \WW^\perp (Y')  = \WW^\perp (X') - \WW({\SSS} )  
% \leq (1-\eps) \WW(X')
$$
% (assuming the desired estimate fails). 

  On the other hand, since the cobras are tame, we have $i_*(Y') =
  i_*(X')$, so by Lemma \ref{flux through inv WAD}
$$
     \WW^\perp (Y') \geq \WW^\perp (X') -  O( |X'|+ p\, \WW^\ver).  
$$
Comparing this with the previous formula, we obtain the desired.
% $$   
%     \WW^\perp (X')  \leq \frac A\eps\,  |X'|, 
%$$
% which contradicts our assumption as long as $C> A/\eps$. 
\end{proof}

****}

\sss{Cobras in the Elephant eye}

Assume now that we have a multi-elephant combinatorics with $d$ eyes
(so the period is equal to $pd+1$. 
For the WAD $\RR'$ of external rays,
we let $\RR' (n)$ be the union of rays landing on $J_n$, and $\RR_k' (n)$
be the corresponding  individual rays.
Similarly,  for the diagram $\RR=i_*(\RR')$,
we use notation $\RR(n)$ and $\RR_k (n)$. 

Note that  $\RR (0)$ comprises two rays, so together with $J_0$ they
divide $U$  into two half-disk. Let $H_+$ be the one that does
not contain  the critical value $c$. For our combinatorics, it contains only
bounded number of the little Julia sets. Accordingly, $H_-$ is the
other half-disk. 

% We will use notation $\RR''(n)$ etc. for the rays in $U''\sm f^{-2} (\KK)$.

\begin{lem}\label{many tame cobras} 
 Any cobra $\si$ in $U \sm f^{-1} (\KK)$ attached to  $\RR(0)$ produces a tame cobra
 of the same weight in $U' \sm f^{-2} (\KK)$ attached to $\RR'(d\, p)$. 
\end{lem}

\begin{proof}
   Since the cobras in $U\sm f^{-1}(\KK)$ attached to $\RR'(0)$ form a $0$-symmetric 
family, we can assume without loss of generality that $\si$ is
contained in $H_+'$. 

    We have: $f(H_+')\supset H_+$,  % \note{for genuine ql maps}
and all little Julia sets $J_{pk}$ in $H_+'$ except $J_{pd}$ are mapped
to $H_-$. It follows that the  lift of $\si$ 
 by $f:  H_+' \ra U $  is a  tame cobra in $H_+'\sm f^{-2}(\KK)$ attached to
$\RR'(d\, p)$. 
\end{proof}

%  Comparing this with Lemma \ref{tame cobras}, we conclude: 

\begin{lem}
   If the total weight of cobras in $U\sm \KK$ attached to $\RR(0)$ 
is at least  $p \WW_\loc$ (in order), we are done. 
\end{lem}

\begin{proof}
%       We can assume that $\WW^\ver < 2 \WW_\loc$. 
%
% Reducing the cobras in question to  $U\sm f^{-1} (\KK)$ we loose
%$\leq p \WW^\ver$ of their weight. What is left is still much bigger
%than $p\WW_\loc$. 
    Lemma \ref{many tame cobras} produces for us a family of tame
  cobras in $U'\sm f^{-1}(\KK)$ of the total weight $p\WW^\loc$,
  \note{be careful}
and Lemma \ref{tame cobras} leads to a contradiction. 
\end{proof}

%\begin{rem}
%   How strong is the assumption that the total flux through $\RR(0)$ 
%is much bigger than $ p\WW_\loc $ ? 
%If it is violated then most leaves of the canonical foliation 
%intersect $\RR(0)$  bounded number of times. 
%I thnk this case can be reduced  the case of the CAD localized in the
%translation region, but have not yet looked at it closely.   
%\end{rem}

%{\it Question.}  
%How to rule out the situation when most of the flux through $\RR(0)$ 
%is comprised of curves that keep rotating around $0$ in the same
%direction ? 

\comm{****
\begin{lem}\label{many cobras}
Assume the canonical WAD is not localized in the translation region.    
At least $\asymp p\, \WW_\loc$  of the flux through $\RR(0)$ is supported on
 cobras in $H_+$.
\end{lem}

\begin{proof}
  In the non-localized case,  at least $\asymp p\, \WW_\loc$  of the flux through $\RR(0)$ 
is supported on the canonical WAD.   \note{justify}
Let us take an arc $\alpha$ of the
canonical WAD crossing $\RR(0)$. There are three possible cases:

\ssk \nin (i) It is trapped inside $H_+$;

\ssk \nin (ii) It returns to $\RR(0$ from $H_+$ crossing it through
the same ray as it entered;

\ssk \nin (iii)  
It returns to $\RR(0) $ from $H_+$ crossing it through
the diffenrent  ray.

\msk In case (i)  it lands on some  little Julia set in $H_+$
(possibly, on $J_0$), and the
total weihgt of such arcs is bounded by $(d+1)\, \WW_\loc$.  

\ssk
In case (ii) $\alpha$ bounds a cobra in $H_+$.

\ssk In case (iii), we have similar three subcases according to its behavior in $H_-$:

\ssk\nin (iii-a) After entering $H_-$ it is trapped over there; 

\ssk\nin (iii-b) It returns back to $\RR(0) $ from $H_- $ crossing it through
the same ray as it entered;

\ssk\nin (iii-c) It returns back to $\RR(0) $ from $H_- $ crossing it through
the different ray;

\ssk
Case (iii-a) leads to localization. \note{justify}

\ssk 
Case (iii-b) creates a cobra.

Finlly, in case (iii-c) we have again three subcases, etc. 

\msk In the end, we have: 

\ssk \nin 1)
 $\alpha$ either lands on some little Julia set in $H_+$:
totall weight of these arcs is still at most $(d+1) \, \WW_\loc$; 

\ssk \nin 2)  or it
creates a cobra based on $\RR(0)$ (which is what we want);

\ssk \nin 3) or it lands on some little Julia set in $H_-$ after a
bounded number of revolutions, which again leads to the localized case; 

\ssk \nin 4) or or it lands on some little Julia set in $H_-$ after an
unbounded number of revolutions.  
\end{proof}

\begin{lem}\label{many tame cobras} 
  Assume the canonical WAD is not localized in the translation region.   
Then  at least 1/2 of the flux through $\RR'(d\, p)$ is supported  on tame
    cobras. 
\end{lem}

\begin{proof}
    We have: $f(H_+)\supset H_+$, \note{for genuine ql maps}
and all little Julia sets $J_{pk}$ in $H_+$ except $J_{pd}$ are mapped
to $H_-$. Lifting by $f:  H_+'\ra U$  the cobras from  Lemma \ref{many cobras},  
 we obtain tame cobras in $H_+'$ attached to
$\RR(d\, p)$ that occupy at least 1/2 of the flux through $\RR'(d\, p)$. 
\end{proof}
****}

\comm{***********
\subsection{Rotation number} 

Let us define the {\it rotaion number}  $\rho(K)$ around a little  Julia set
$K=K_i$ as follows.   Take any ray $\RR$ landing at  $K$ oriented
from $K$ to $\di U$.   Consider the immersed foliation $\FF$
through $\RR$ oriented positively with repect to $\RR$. 
Then we have the (partially defined)   first return map $T: \RR\ra
\RR$. 
 For any $a\in \RR$ and $b= T(a)$, we let $c(a, b)= 1=
-c(b,a)$. If $T(a)$ is not well defined, we let $c(a,b)=0$ for all
$b\in \RR$.  
This function can be extended to a cocycle $c(a,b)$ for the
equivalence relation generated by $\FF$ on $\RR$. 
 Then we let
$$
   \rho(K_i) =  \limsup_{n\to \infty}   \frac 1n \int_{\RR} c(a,a_n) \, d\mu(a),
$$ 
where $a_n$ is the $n$th return of the leaf $L_a$ to $\RR$, 
and  $\mu$ is the transverse measure on $\FF$.

\begin{lem}
  The rotation number $\rho(K)$ does not depend on the choice of the ray
  $\RR$. 
\end{lem}
***************}

\subsection{Poincar\'e return map}

Given some arc $G$ for a map $f$, we can consider the return map $T=T_G$ to $G$ along the
immersed foliation $\FF^\perp$. 
More precisely, let us consider the covering corresponding to $U\sm
G$.  Its ideal boundary is marked with four ideal intervals, two of which, 
$\BG$ and $\BG^\#$, correspond to $G$.
% (two others correspond to the
% boundary components where $G$ lands). 
Let us consider the canonical foliation on this surface. 
It defines the  transit map from $\BG$ to $\BG\sqcup \BG^\#$,
which  can be viewed as the return map to $G$.  \note{makes sense?}
 It comes together wirh 
% the quasi-invariant transverse measure $\mu$ on $G$, and 
the sign function $\eps(x)\in \{0,\pm 1\}$ recording whether the return is well defined
(otherwise $\eps=0$) and if so, whether it is  orientation
preserving (in this case, it goes from $\BG$ to $\BG^\#$) or
reversing (then it  goes from $\BG$ to itself). 

\begin{lem}
  Assume we have a sequence of maps $f_n$ as above. Then the
  corresponding return maps converge almost everywhere to a
   measure preserving automorphism. 
\end{lem}

Here the ``measure'' $\mu$  is a weak limit of the flux measures
$\mu_n$ through $G$. 

If $G=\RR$ is an external ray then  $T$ is monotonic on $\{x: \ \eps (x)=1 \} $,
since the corresponding foliation (albeit immersed) \note{OK?} 
 lives on the plane.

Given a sequence of maps $f_n$ with periods $p_n \to \infty$ 
and a ray  $\RR$ (independent of $n$ in an appropriate sense),
let us consider  the coresponding return maps $(T_n, \mu_n, \eps_n)$.

\begin{lem}
  Assume 
$$  \WW^\ver (f_n) \leq 2 \WW_\loc (f_n), \quad p\cdot \WW_\loc (f_n) = o(\WW^\perp (\RR, f_n)); \quad
  \WW^\perp (\RR, f_n)\asymp  \locflux.
$$ 
Then the maps $(T_n, \mu_n, \eps_n)$ have a limit
$(T, \mu, \eps)$. Moreover, $\eps(x)\not= 0$ a.e., $T$ preserves $\mu$ and is monotonic on 
 $\{x: \ \eps(x)=1 \} $.
\end{lem}

\begin{proof} 
Let us orient  the ray $\RR$ from the little $J$ -set to $\di U$.
For any point $a\in \RR$,  orient the leaf $L_a^n$ of
$\FF_n^\perp$ positively with repsect to $\RR$.  

Let us consider the set $X_n\subset \RR$ of points that
  never return to $\RR$ in positive time. 
This set can return to another edge $G$ of our cell decomposition,
but it must be   wandering  under the return maps.  Hence the number
of returns is bounded by   
$$
       (1+\de)  \locflux) /\mu_n(X)
$$
(since the  trnasverse measure is almost preserved  under the
holonomies). 
It follows that there exists at most 
$$
   C p\cdot  \locflux  /\mu_n(X)
$$
different homtopy classes of arc-segments in this foliation (with some
absolute $C$). Since each of them has weight at most $\WW_\loc(f_n) $, we
obtain:
$$
  \mu_n(X)  \leq Cp\WW_\loc (f_n)\cdot  \locflux
  /\mu_n(X),
$$
and thus
$$
   \mu_n(X) \leq \sqrt{Cp\WW_\loc (f_n)\cdot 
      \locflux } = o(\WW^\perp(\RR, f_n )). 
$$
Thus, in the limit the return map is well defined almost everywhere. 
\end{proof}

\begin{lem}
  Under the assumptions of the previous lemma,  one of the following
  situations occur:

\ssk \nin $\mathrm{ (i)}$ There is a cobra in $H_+$ of definite weight based upon  a
ray $\RR_i(0)$;

\ssk \nin $\mathrm {(ii)}  $  $T|\RR(1) =\id$ (so all the leaves of the ``limiting lamination'' are closed). 
\end{lem}

\begin{proof}
  If there is a cobra of definite weight based upon $\RR(1)$ then
  one of its lifts is the desired cobra based upon $\RR(0)$.  

Otherwise $\eps(x)=1$ a.e. for the return map $T_1$  to $\RR(1)$, so it is
monotonic. Since it is measure preserving, it is identical. 
\end{proof} 

\msk
\subsubsection{Quadratic-like restrictions}
Below, we will restric ourselves to the case of two eyes
(of period $2p+1$). 

Let us consider a cobra segment $S$ based on sone $\RR(n)$ with $n<p$
that begins by crossing  the edge $e_{n-1} $  connecting $K_n$ to $K_{n-1}$.
Assume it is part of some canonical arc $\alpha$.  Then typically it cannot cross the
path $\de_{n-2}$ of edges $e_{j-1}$ consecutively connecting $K_j$ to $K_{j-1}$,  $j=n-2,\dots
1$, for it would lead to an interection between $\alpha$ and $f^* \alpha$. Hence $S$
goes around  $\de_{n-2}$  returning to $\RR(n)$ on the other side of $\de_{n-2}$, or else
it comes back through $e_n$.  
%Let us consider a segment of the first kind. 

\note{How about returning through $e_{n-1}$? }

There are only bounded number of segments of this kind. \note{why?}  
Assume the total weight of the corresponding arcs is of order
$p\WW_\loc$. Then we can push-forward such a segment to obtain at
most $O(p)$ segments of weight of order $\WW_\loc$. These segments
represent cobras based on $\RR_{n+1}$ of the same type, and there are
only a bounded number  of those,   \note{why?} 
so in fact, there is only bounded
number of push-forwards in question.  \note{elaborate!}

Doing this finitely many times,  we obtain a segment $S$  whose
push-forward $f_*S$ encloses $S$, creating a quadratic-like
restriction of the map.  \note{why?} 

\msk
Somewhat different approach. 
Let us  have a cobra $S$ (which is part of a canonical arc $\alpha$) 
 based upon some ray $\Ray(n)$ that together
bound a disk $D$. 
Assume:

\ssk \nin (0)
  $D$ does not contain $K_1 $; 

\ssk \nin (i)
 $\alpha$ is pushable forward 
(as we know, a typical arc is such);  

\ssk\nin (ii)
 $\Ray(n+1)$ is disjoint from $D$;

\ssk\nin (iii) 
There is  a maximal string of at least two consecutive  little Julia sets 
$K_l , \dots , K_m$ contained in $D$, with $m< n$. 

\ssk
Then $f_* (D)$ 
is a cobra-disk  based upon $\Ray(n+1)$ 
that encloses $K_{n+1}$. Hence it cannot be disjoint from $D$.
By (ii), $f_*(D)$ cannot be contained in $D$. 
Hence 

\ssk\nin
a) either $f_*(S)$ enters $D$ through the mouth of $S$ creating a new
cobra on $\Ray(n)$, or else  

\ssk \nin
b)  $f_*(D)\supset D$. 

\ssk\nin 
Assume in addition that

\ssk\nin (iv) 
 $D\supset K_0$

\ssk
 Then Case a) is excuded. Indeed $ f(D) \supset K_1$, 
while $f(D) \sm D \not\supset K_1$. \note{why?} 
Hence $D\supset K_1$ contradicting assumption (0).  

Case  b)  is excluded since otherwise we would obtain 
a  %  (homotopic) 
quadratic-like map $f: D\ra f  (D) $.

\ssk 
How to excule the situaton when $D\not\supset K_0$ (to get rid of
(iv))? In this case, $D$ is sometimes a Thirston obstruction.
Namley, in case a) if $f(D)\sm D $ is

\msk
\subsubsection{Cobras with long tails} 

Let us consider a cobra based on $\RR(1)$ whose tail extends further
into the translation region crossing consecutively $\RR(1), \dots , \RR(N)$ so that the tail region
 bounded by $\RR(1)$ and $\RR(N)$ is empty (no little Julia sets inside). 
Let us pull it back starting with   the shift at the translation region. These
pullbacks contain a bounded number of little Julia sets located outside
the translation region.  Hence, after a bounded number of pullbacks,
we will either lose all these Julia sets, or will stabilize. In the
former case, we obtain an empty cobra that crosses about $N$ rays.
So, if the total weight of such cobras is of order $p\WW_\loc$, then the
total flux they produce is of order $Np \WW_\loc$.  This is not
allowed if $N$ is sufficiently big. 

In the latter case, we obtain a Thurston obstruction.   
 
\sss{Confined situation}
Assume we have cobras encircling  almost the whole set
of little Julia sets $J_{k+p}$, $0\leq k\leq p$. 
Then canonical arcs that start at $J_l$, $0\leq l\leq p$, 
either produce a quadrtic-like restriction, or empty cobras as above, 
or else they are confined.

\subsection{No closed curves}

\begin{lem}
  If $f$ is not intermediately renormalizable then the $2$-Canonical Closed Curve
  Diagram $\CC$ is empty. 
\end{lem}

\begin{proof}
First note that a curve $\gamma\in \CC$ cannot suround a single Julia set $J_s$ since in this case
the local weight of $J_s$ would be less than $1$.

 Let us take a curve $\gamma\in \CC$ surrounding the
  smallest number of little Julia sets.  
  Let $s\in [0,p)$ and $t\in (0,p)$ be the smallest ones such that $K_s$ and $K_{-t}$
  are surrounded by $\gamma_s:= \gamma$.
Let us pull $\gamma_s$ back $s+t$ times along the
  orbit of $K_s$. We obtain curves $\gamma_{s-1}, \dots, \gamma_{-t}$. 
By Lemma \ref{flux vs weight},
$$
   \WW(\gamma_{s-n}) \geq d_n^{-1}  \WW(\gamma_{s-n+1}),
$$
where $d_s=2$ and  $d_n=1 $ for $n\not=s$. It follows that all the curves $\gamma_{s-n}$,
belong to $\CC$.
Hence they are disjoint, so the correspoding Jordan disks $D_{s-n}$ are either disjoint or nested.  

Furthermore, since $\gamma$ surrounds the minimal number of little Julia sets,
all these curves 
surround the same number of little Julia sets.
It follows that if the disks $D_{s-n}$ and $D_{s-m}$ are nested then 
$\gamma_{s-n}$ and $\gamma_{s-m}$ are homotopic in $\C\sm \KK$.

Putting the above ingredients together, we see that $D_{-t}\cap \KK$
overlaps with $D\cap \KK$, the map $f^{s+t} : D_{-t}\ra D$ is a branched covering of degree two (if $s>0$) or one
(if $s=0$). 
Moreover, $\gamma_{-t}$ is homotopic to $\gamma$ in $\C\sm \KK$. 
In the former case,  $f$ is intermediately renormalizable;
in the latter case these curves form a Thurston obstruction (that does not exist in our situation).   
\end{proof}

\subsection{Long rectangles} 

\bignote{Written Feb 2017} 

We assume that the period is $2p+1$ (so it is really the doube eye case).

  Let us consider a non-confined arc $\alpha$ that begins and ends deep inside the
  translation region.  Then we have an initial and the final pieces of
  $\alpha$, $\alpha_i$ and $\alpha_f$. Both of them must exit the
  translation region.  Assume they  cross $\Ray (1)$ before exiting the
  region. (It should be possible to achieve  tihs  typically by taking
  $f_*(\alpha)$.) If there is another ray, $\Ray(n)$,  deep inside the tranlation
  region, then pieces of $\Ray(1)$ and $\Ray(n)$, together with pieces
  $A_i$ and $A_f$
  of  $\alpha_i $ and $\alpha_f$, form a long rectangle $R$. 
Let us analyze the structure of this rectangle. 

Assume first that both pieces $A_i $ and $A_f$  cross $\Ray(1)$
positively (``from inside'').  Let us also assume that we have just
one loop on the other side of $\Ray(1)$ and this loop does not contain
$K_0$, so this loop is a  is a cobra. 
Then consider several cases:

1) The rectangle $R$  is empty, i.e., it does not contain any little $K$-sets.
Then our cobra has a long tail.

2)  $R$ contains a string of consequtive little $K$-sets in the
adjunct orbit (of $K_{p+1}$). and then ``dive down " or ``up".
 Let us push forward $\alpha$. Then both horizontal sides of $R$
 go ``up" or ``down'', which forces $f(R)$ and $R$ cross. 

3) The pieces $\alpha_i$ and $\alpha_f$ don't dive, 
  but eventuallay land at the adjunct Julia set.
  Then look at the main string of little Julia sets (the translation orbit of $K_1$)
  and argue that they are confined.

  ***************  } 

\section{Elephant eye combinatorics}

We will use \cite{Book} for reference.

\sss{Description of the combinatorics}

Let $f: U' \ra U$ be an even  quadratic-like map with connected Julia set
$\Jul\equiv \Jul_f$ 
and both fixed points repelling.
Then one of these fixed point, called $\beta$, is the landing point of
the external $0$-ray,
while the other one, called $\alpha$, is the landing point of $\qq>1$
external rays $\Ray_i$ that a cyclically permuted with
some combinatorial rotation number $\pp/\qq$.
These rays partition $U$ into $\qq$ sectors $\Sec_i$ attached to $\alpha$,
counted anti-clockwise beginning with $S_0\ni 0$. 
See  \cite[\S 24.4]{Book}.

The symmetric rays $-\Ray_i$ land at the preimage $-\alpha$ of
$\alpha$.  Together with the rays $\Ray_i$, they partition $U$ into $2\qq-1$ domains: the
central one $\Ups$, the above $\qq-1$ sectors $\Sec_i$ attached to $\alpha$,
and symmetric sectors $\Zec_i$ attached to $-\alpha$.   
The intersections of these sets with $U'$ will be marked with
``prime'': $\Ups_i':= \Ups\cap U'$, etc.   

Assume that the map $f$ is renormalizable with period $p$ and little
(filled) Julia sets $K_n$, $n=0,1,\dots, p-1$, where $K_0\ni 0$
(see \cite[\S 28.4]{Book}).  We let $\Kfilled:= \bigcup K_n$.  

\msk
For the class of {\em elephant eye combinatorics} we assume:

\ssk\nin E1. The combinatorial rotation number of $\alpha$ is $1/\qq$.
  Then the domain $\Ups'$ is mapped onto $\Sec_1$ as a double covering,
  each $\Sec_i'$ and $\Zec_i'$ is univalently  mapped onto
  $\Sec_{i+1}$ for $i=1,\dots, \qq-2$, while each $\Sec_{\qq-1}$ and $\Zec_{\qq-1}$ is
  univalently mapped onto $\Ups \cup \bigcup \Zec_i\cup (-\Ray_i)$.

  \ssk\nin E2.
  $K_0$ is the only little Julia set contained in $\Ups$.

  \ssk\nin E3. $p= \qq+ \bb$, where $\bb$ is bounded by some $\bar \bb$
  (but $\qq$ is unbounded!).   We will also assume for simplicity that
    $\bb< \qq$. 

    \ssk Note that under these assumptions,
     each sector $S_i$, $i=1,\dots, \qq-\bb$,
    contains a single little Julia set, $K_i$.
    
  \sss{Hubbard tree}

  An efficient  way of describing the renormalization combinatorics is
  in terms of the Hubbard tree $\Hub\equiv \Hub_f$ (see \cite[]{Book}).
  Roughly speaking, it is the tree spanned by  the little Julia sets
  $K_i$ (collapsed to points) 
  inside the  big one.
  The vertices of the $\Hub$ are the collapsed $K_i$
(to which we will still refer as ``$K_i$''), together with the
$\alpha$-fixed point   and  all branched points.
(Note that for $\qq>2$, $\alpha$ is also a branched point.)%  
\footnote{The subtlety of the precise definition  has to do with the
  possibility  that $\Kfilled  $ can be non-locally-connected.}

\comm{*******
  Let $\Hub_i$, $i\in \Z/\qq\Z $ be the limbs of $\Hub $ attached to
  the $\alpha$-fixed point. We also let

  \ssk\nin $\bullet$
  $[K_i,K_j]\subset \Hub$
 be the arc of the tree connecting $K_i$ to $K_j$
(more generally, we can consider the span $[T_1, T_2, \dots, T_k]$ 
of any family of subtrees);

  \ssk\nin $\bullet$
  $\Hub^\bullet\subset \Hub_0$ be  
  the span of  $-\alpha$ and  all little Julia sets contained in
  $\bigcup \Zec_i$;
%   limb attached to $0$ and containing $\-\alpha$;

  \ssk \nin $\bullet$
    $\Hub^\#\subset \Hub$ be the span of $\alpha$ and all $K_i$ with $i>\qq$ contained in
    $\bigcup \Sec_i$. 

  Under the Elephant eye combinatorics, we have: 

 \ssk \nin $\bullet$    
    for $i\not=0$,  $\Hub_i$ is   homeomorphically mapped onto $\Hub_{i+1}$;

\ssk\nin 
$\Hub^\bullet$ is  homeomorphically mapped onto
****************}

\bignote{An interesting combinatorial lemma is hidden here} 

\comm{*********
\begin{lem}
  For $0< n< p$, let $\gamma_n$ be the arc of $\Hub$ connecting
  $K_n$ to either $\alpha$ or $-\alpha$ (depending on whether
  $K_n\subset \bigcup \Sec_i$ or $K_n\subset \bigcup \Zec_i$). 
If $K_m\in \gamma_n $, $0< m<p$, then 
 under the Elephant eye assumption, $m>n$.  
\end{lem}

\begin{proof}
  Let us consider the maximal   $n$   for which the statement is
  violated.

  Assume first that $n< p-1$ and
  $K_n \not\subset  \Sec_{\qq-1}\cup \Zec_{\qq-1} $.
Then the arc $\gamma_n$ is homeomorphically mapped onto
$\gamma_{n+1}\ni K_{m+1}$,
with  $m+1< n+1< p$,
contradicting the maximality property of $n$. 

\bignote{OK not to  distinguish notation for a set $K_n$ from
  a vertex $K_n$?} 

If $n=p-1$, then the arc $\gamma_n$ is homeomorphically mapped onto
  the arc $[\alpha, K_0]$ containing $K_{m+1}$.
  On the other hand,  by Assumption E2,  $[\alpha, K_0]$ does not
  contain any little Julia sets inside. 

Finally, assume $n< p-1$ but $K_n \subset \Sec_{\qq-1} \cup \Zec_{\qq-1}$.
All the more,  $m< p-1$, so by Assumption E2,  $K_{m+1}$ must be contained
in some lateral sector $\Zec_j$.
Then the arc $\gamma_n$ is homeomorphically mapped onto an arc
$\de\subset \Hub$ connecting $\alpha$ to $-\alpha$,
entering $\Zec_j$, and then proceeding to
$K_{m+1}$ and further  to $K_n$, without ever exiting $\Zec_j$. 
It follows that its piece from $-\alpha$ to  $K_{n+1}$ is equal to $\gamma_{n+1}$,
and moreover, it contains  $ K_{m+1}$.
This is a  contradiction once again. 
\end{proof}
***************************}

For $ n\not= 0\ \mod \, p$, let $\gamma_n$ be the arc of $\Hub$ connecting
  $K_n$ to either $\alpha$ or $-\alpha$ (depending on whether
  $K_n\subset \bigcup \Sec_i$ or $K_n\subset \bigcup \Zec_i$).%
\footnote{We tacitly assume E2, though the discussion can be easily
 extended  to the general case.} 
We also let $\gamma_0^+$ be the arc of $\Hub$ connecting $K_0$ to
$\alpha$, while  $\gamma_0^-$ be the arc  connecting $K_0$ to
$-\alpha$. We let $\gamma_0$ be the concatenation of these two,
i.e., the arc connecting $\alpha$ to $\alpha_-$.
We have the following transformation scheme for these arcs:

\ssk\nin $\bullet$
   Each $\gamma_0^\pm$ is homeomorphically  mapped by $f$ onto $\gamma_1$;
  
   \ssk\nin $\bullet$
   If $0< n< p-1$ and $K_n \not\subset  \Sec_{\qq-1}\cup \Zec_{\qq-1}$,
   then  $\gamma_n $ is homeomorphically  mapped by $f$ onto $\gamma_{n+1}$;

\ssk\nin $\bullet$
    If $0< n< p-1$ and $K_n \subset  \Sec_{\qq-1}\cup \Zec_{\qq-1}$,
   then  $\gamma_n $ is homeomorphically  mapped by $f$ onto
   $\gamma_{n+1}\cup \gamma_0$; 

 If $n = p-1$ (and hence $K_n \subset  \Sec_{\qq-1}\cup \Zec_{\qq-1}$
   then  $\gamma_n $ is homeomorphically  mapped by $f$ onto
   $ \gamma_0^+$.

\begin{lem}\label{bounded deg}
  Under the Elephant eye assumption,
  any edge $\gamma$ of  $\Hub$ is mapped under $f^p$ at most two-to-one
 into the union of at most $\bb+ 1$ consecutive  edges.
\end{lem}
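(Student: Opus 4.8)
The plan is to reduce to tracking a single arc $\gamma_n$ through the transformation scheme stated above, bound the degree using that $f$ has only one critical point, and bound the spread of the image using E3. First I would record that every edge of $\Hub$ lies on exactly one of $\gamma_0^+,\gamma_0^-,\gamma_1,\dots,\gamma_{p-1}$ — under E2 the vertices of $\Hub$ are the collapsed little Julia sets $K_n$, the points $\pm\alpha$, and the branch points, all of which lie on these arcs — and that applying $f$ once carries an edge on $\gamma_0^\pm$ into $\gamma_1$. Hence it suffices to take $\gamma\subseteq\gamma_n$ with $1\le n\le p-1$ and follow $A_k:=f^k(\gamma)$ for $0\le k\le p$, each step being one of the four moves of the scheme (writing $\gamma_0:=\gamma_0^+\cup\gamma_0^-$).

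For the degree, the key point is that $f$ has a unique critical point, $0\in K_0$, and $\Hub$ is forward invariant, so $f|_\Hub$ is a local homeomorphism away from the vertex $K_0$ and folds there with local degree $2$; thus a fold of $f^{k+1}|_\gamma$ can occur only when $A_k$ contains $K_0$ in its relative interior. Since $A_0=\gamma$ is a single edge, $K_0$ is at worst an endpoint of $A_0$, so the first fold arises at some step $k_0\ge 1$. Once $A_{k_0}$ straddles $K_0$, the transformation scheme together with $f(K_j)=K_{j+1}$ shows that the fold of $A_{k_0+j}$ sits at $K_j$, and these fold-points are pairwise distinct within a single period (two would-be straddles of $K_0$ differ by a full cycle $K_0\mapsto K_1\mapsto\cdots\mapsto K_{p-1}\mapsto K_0$, i.e.\ by $p$ steps). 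Tracking this, together with the fact that each fold only doubles the edges it turns around, one concludes that no edge of $f^p(\gamma)$ is covered more than twice, i.e.\ $f^p|_\gamma$ is at most two-to-one.

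For the spread of the image I would use E3. By the remark after E3, each sector $\Sec_i$ with $1\le i\le\qq-\bb$ carries a single $K_i$ and no branch point, so the corresponding $\gamma_i$ are single edges and the plain ``$\gamma_m\to\gamma_{m+1}$'' moves merely shift these among one another without growth; by E1–E2 the remaining $\bb$ little Julia sets $K_\qq,\dots,K_{p-1}$ lie in the late sectors $\Sec_i,\Zec_i$ ($\qq-\bb\le i\le\qq-1$) and, together with $\pm\alpha$, $K_0$ and the intervening branch points, span a subtree $R$ which a direct count shows has at most $\bb+1$ edges, with $\gamma_0\subseteq R$. Running $\gamma_n$ through all $p$ moves, the arc can leave the chain of single edges only through a ``$\gamma_m\to\gamma_{m+1}\cup\gamma_0$'' move, which drops it into $R$; and once inside $R$ it cannot escape, since the homeomorphic moves do not enlarge it and the only new material ever inserted is $\gamma_0\subseteq R$. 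Therefore $f^p(\gamma)\subseteq R$ is contained in at most $\bb+1$ consecutive edges.

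The hard part, I expect, will be making the last two steps precise: on the one hand, ruling out that nested or overlapping folds raise the multiplicity above $2$ (for this one must exploit that each fold sits at a distinct $K_j$ and that the pieces turned around stay disjoint over the period); on the other hand, pinning the exact constant $\bb+1$ and checking that a shifted copy of $\gamma_0$ produced by the scheme really does stay inside $R$. Both are finite combinatorial verifications on the Hubbard tree fueled by E1–E3, but they require care with the (bounded) branching in the late sectors, which is the genuine content beyond the soft degree estimate.
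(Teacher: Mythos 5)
Your overall plan (track the arcs $\gamma_n$, bound the degree by the single critical vertex $K_0$, bound the spread by E3) is in the spirit of the paper, but both halves of your argument have a genuine gap. For the degree: your key claim that two straddles of $K_0$ within one period must be $p$ steps apart is false once $\bb\ge 2$. As soon as the running image reaches $\gamma_{\qq-1}$, each of the next steps applies the move $\gamma_m\mapsto\gamma_{m+1}\cup\gamma_0$ with $\qq-1\le m\le p-2$, so a fresh copy of $\gamma_0$ — hence an interior copy of $K_0$ — is inserted at \emph{every} one of up to $\bb$ consecutive steps, and the image folds at each of them. The two-to-one conclusion survives, but for a different reason which your argument does not capture: each newly inserted copy of $\gamma_0$ is covered with multiplicity one, folds exactly once onto $\gamma_1$, and the resulting multiplicity-two pieces then travel homeomorphically through the early arcs without re-entering the folding zone before the period ends. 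This is precisely what the paper's induction encodes: for $\qq-1\le n\le p-1$ and $k\le\bb$, the arc $\gamma_n$ is mapped by $f^k$ at most two-to-one into $\gamma_{n+k}\cup\bigcup_{0\le i\le k-1}\gamma_i$; combined with the observation that each $\gamma_0^\pm$ and each $\gamma_n$, $1\le n\le \qq-2$, is carried homeomorphically onto $\gamma_{\qq-1}$ by $f^{\qq-n-1}$, this single inductive statement yields both the degree bound and the $\bb+1$ bound simultaneously.

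For the spread: the conclusion $f^p(\gamma)\subseteq R$ cannot be right, because your subtree $R$ is not forward invariant — $\gamma_0\subset R$ is mapped onto $\gamma_1$, which lies in $\Sec_1$, outside $R$ — so ``once inside $R$ it cannot escape'' fails. Indeed, for an edge $\gamma\subset\gamma_n$ with $n$ small, the copies of $\gamma_0$ created while the image passes through $\Sec_{\qq-1}\cup\Zec_{\qq-1}$ are pushed forward through the early sectors during the remaining $\sim n$ steps of the period, and $f^p(\gamma)$ ends up among the early arcs $\gamma_n,\gamma_{n+1},\dots$, not inside $R$. The bookkeeping behind $R$ is also off: by the remark after E3 the single-Julia-set sectors are $\Sec_1,\dots,\Sec_{\qq-\bb}$, so there are $2\bb-1$ remaining little Julia sets (not $\bb$), of which $\bb+1$, namely $K_{\qq-1},\dots,K_{p-1}$, sit in the last pair $\Sec_{\qq-1}\cup\Zec_{\qq-1}$, and the subtree they span with $\pm\alpha$ and $K_0$ has more than $\bb+1$ edges in general. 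The correct mechanism for the spread is again the induction above: spreading occurs only while the index runs through $[\qq-1,p-1]$, i.e.\ during at most $\bb$ steps, producing at most the $\bb+1$ consecutive arcs $\gamma_{n+k},\gamma_0,\dots,\gamma_{k-1}$, and the remaining iterates of the period merely transport this configuration along the tree. So you should replace both your ``folds are $p$ apart'' step and your invariant-subtree step by this one inductive transport statement.
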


\begin{proof}
It is sufficient to show this for the arcs $\gamma_n$, $0< n < p $, and for
$\gamma_0^\pm$, which  follows from the following observations:

\ssk\nin $\bullet$
Each arc $\gamma_0^\pm$ and each arc $\gamma_n$,
$1\leq n  \leq  \qq-2 $,  is mapped  under $f^{\qq-n-1}$
homeomorphically onto $\gamma_{\qq-1}$;

\ssk\nin $\bullet$
By induction, for $\qq-1 \leq n \leq  p-1 $,  and $k\le \bb$,
the arc $\gamma_n$   is mapped at most two-to-one by
$f^k$  into $\gamma_{n+k}\cup  \displaystyle { \bigcup_{0 \leq i\leq
    k-1}\gamma_i }$, and therefore

\ssk\nin $\bullet$
For $\qq-1 \leq n \leq  p-1 $,   the arc $\gamma_n$   is mapped under
$f^{\bb} $ at most two-to-one  into $\displaystyle { \bigcup_{0 \leq i\leq   \bb}\gamma_i }$.
\end{proof}

\subsubsection{Translation domains}

\bignote{Define them here?}

Let us consider a topological disk $\Om $  inside the translation region
which is the union of $2N+1$ consecutive sectors
$\Sec_n$, $1\leq  l-N \leq  n\leq l+N\leq \qq-1$.
Ler $\Om'$ be its pullback of $\Om_l$ under $f^p$ based at $K_l$,
i.e., the connected component of $f^{-p} (\Om)$ containing $K$. 
Let us also consider the arcs $\de_n\subset \Om$
connecting $\KK_n$ to $\KK_{n+1}$ and aligned with the Hubbard tree
(that is,  homotopic to the  concatenation of the edges $\gamma_n$ and
$\gamma_{n+1}$ considered  above). 

\begin{lem}\label{J-sets don't pull off}
 % There exsists $\bb'$ (depending on $\bb$ only) such that
Under the above circumstances, $\Om'$ contains the little Julia sets $K_n$,
 $   l- N + \bb \leq n\leq l+ N-\bb'$, and the arcs $\de_n$ connecting
 them.  
\end{lem}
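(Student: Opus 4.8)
The plan is to decouple the two assertions. Recall that the little Julia sets are permuted cyclically by $f$, so $f^p(K_n)=K_n$; hence $K_n\subset f^{-p}(\Om)$ for every $n$ with $\Sec_n\subset\Om$, that is, for $l-N\leq n\leq l+N$. Thus the substance of the lemma is \emph{connectivity}: since $\Om'$ is by definition the component of $f^{-p}(\Om)$ containing $K_l$, it suffices to produce, inside $f^{-p}(\Om)$, a single connected set containing $K_l$ together with all the $K_n$ and $\de_n$ in the stated range.

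First I would realize $\de_n$ honestly inside the Hubbard tree, as $\de_n=\gamma_n\cup\gamma_{n+1}$ in the notation of the transformation scheme preceding Lemma~\ref{bounded deg}; this is a legitimate representative of ``$\de_n$ aligned with $\Hub$'', and the arcs $\de_n$ are only defined up to homotopy anyway. Because $\Hub$ is forward invariant, $f^p(\de_n)\subset\Hub$; and by Lemma~\ref{bounded deg} the image $f^p(\gamma_m)$ is covered, at most two-to-one, by at most $\bb+1$ consecutive edges of $\Hub$. Tracing the transformation scheme --- equivalently, factoring $f^p=f^{\bb}\circ f^{\qq}$ and watching how the ``excursion copies'' of $\gamma_0,\gamma_0^\pm$ are spawned each time the orbit passes the wrapping sectors near $\Sec_{\qq-1}$ --- one sees that for $\gamma_m$ in the interior of the translation region this block of $\leq\bb+1$ edges sits within combinatorial distance $\bb$ of $\gamma_m$ towards one end and within distance $\bb'$ towards the other. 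Hence $f^p(\de_n)$ is contained in $\bigcup_{l-N\leq m\leq l+N}\Sec_m=\Om$ as soon as $l-N+\bb\leq n\leq l+N-\bb'$; equivalently, $\de_n\subset f^{-p}(\Om)$ for exactly those $n$. Each $K_n$ with $l-N+\bb\leq n\leq l+N-\bb'$ then lies in $f^{-p}(\Om)$ too, being an endpoint of $\de_n$ (and of $\de_{n-1}$).

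It remains to assemble the pieces. For $n$ in the stated range the $K_n$ and the $\de_n$ form a chain, since $\de_n$ meets $\de_{n+1}$ in $K_{n+1}$; so $\bigcup K_n\cup\bigcup\de_n$ is connected, and we have just shown it lies in $f^{-p}(\Om)$. Since $\bb$ and $\bb'$ are bounded while $\Om$ comprises $2N+1$ sectors with $N$ large, the index $l$ itself lies in $[\,l-N+\bb,\;l+N-\bb'\,]$, so this connected set contains $K_l$; hence it lies in the component $\Om'$ of $f^{-p}(\Om)$ through $K_l$. This is exactly the assertion.

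The main obstacle is the bookkeeping hidden in ``tracing the transformation scheme'': Lemma~\ref{bounded deg} already gives the qualitative fact that $f^p(\gamma_m)$ lives in $\bb+1$ consecutive edges, but to extract the sharp losses $\bb$ and $\bb'$ at the two ends one must keep track of \emph{which} edges these are, with the correct asymmetry --- this is where the doubling along $\gamma_0$, the precise location of the adjunct little Julia sets $K_\qq,\dots,K_{p-1}$ near $\alpha$ and $-\alpha$, and the near-$\alpha$ behaviour of $f^{\qq}$ all enter. A secondary point, already addressed by using the tree representative of $\de_n$, is that one needs $f^p(\de_n)$ to be genuinely contained in $\Om$ and not merely homotopic into it; drawing $\de_n$ inside the forward-invariant tree $\Hub$ (rather than as an arbitrary arc in its homotopy class) is what makes this automatic.
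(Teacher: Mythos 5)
Your proof is correct and essentially the paper's: both rest on Lemma~\ref{bounded deg} (via the explicit transformation scheme) to get $f^p(\de_n)\subset\Om$ for $n$ in the stated range, and then conclude by connectedness that the chain of the $K_n$ and $\de_n$, which meets $K_l$, lies in the component $\Om'$ of $f^{-p}(\Om)$. The only difference is organizational: the paper runs the same argument as an induction on the combinatorial distance from $K_l$, lifting one arc $\de_n$ at a time (based at $K_n\subset\Om'$), rather than assembling the whole chain at once.
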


\begin{proof}
Assume inductively in $k=0,1,\dots$ that the assertion is true for all
$\KK_n$ on combinatorial distance at most  $k$ from $\KK_l$.
Let for definiteness $n=l+k \geq l$ (and $n<\qq- \bb$). 
By Lemma \ref{bounded deg},  the  image $\om_n= f^p(\de_n) $ is contained in the
union of arcs $\de_{n+j}$, $  |j|\leq \bb$, so it is contsined in
  $\Om$.  It follows that $\de_n$ is the lift of $\om_n\subset \Om$
  based at $\KK_n$. Hence $\de_n$ is contained in $\Om'$, and so is $\KK_{n+1}$.  
\end{proof}

\begin{lem}\label{interesting moments}
 Let $(f^n)^* \Om\not\supset K_1$  %  $n\leq 3p$,
 be a pullback  of $\Om$
 containing a horizontal arc $\alpha$
 with endpoints on combinatorial distance at
most $N-\bb$ from  the base set $\KK_{l-N}$.
Then   $f^* \alpha$  does not pull off.
\end{lem}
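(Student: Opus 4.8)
The goal is to show that the further $f$-preimage of $\alpha$ is again a horizontal arc of $(f^{n+1})^*\Om$ with endpoints on the correspondingly index-shifted little Julia sets; ``not pulling off'' means precisely that this preimage neither breaks into several arcs, nor becomes non-proper, nor loses an endpoint.

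First I would reduce everything to a univalence statement. The critical little Julia set $K_0$ lies in $\Ups$ and $f(K_0)=K_1\subset\Sec_1$, so the critical value of $f$ sits in $K_1$, and $\Ups'$ is the only piece of the ray partition over which $f$ is ramified (E1). Thus if $(f^n)^*\Om$ contained the critical value it would, being sector-aligned, contain $K_1$, contradicting the hypothesis; so $f$ admits a univalent inverse branch $g$ over $(f^n)^*\Om$, and $(f^{n+1})^*\Om=g\bigl((f^n)^*\Om\bigr)$. Hence $f^*\alpha:=g(\alpha)$ is automatically a single embedded arc, and it remains only to identify it. The same hypothesis keeps $(f^n)^*\Om$ over sectors of index $\ge 2$, hence off the critical sector $\Sec_0$; and since $\Om$ lies in the translation region with top sector index $\le\qq-1$, every pullback shifts the window of sector indices downwards, so no wrap-around through $\Sec_{\qq-1}$ occurs and $g$ acts on sectors and on the aligned arcs $\de_m$ as a clean shift of indices by $-1$.

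Next I would read off the combinatorics. By E1, $f$ shifts the sector index by $+1$ on $\Sec_i$, $1\le i\le\qq-2$, and by E3 together with the remark that each such sector carries the single little Julia set $K_i$, the branch $g$ sends $K_{m+1}$ to $K_m$ in the relevant range. The transformation scheme for the arcs $\gamma$ gives, on the translation region, $f(\de_m)=\de_{m+1}$ homeomorphically and without breaking, hence $g(\de_{m+1})=\de_m$. Since $\alpha$ is horizontal it is homotopic rel its endpoints (which lie on little Julia sets $K_i$ and $K_j$) to a concatenation of aligned arcs $\de_m$ inside $(f^n)^*\Om$; applying $g$, the arc $f^*\alpha$ is homotopic rel endpoints to the concatenation of the $\de_{m-1}$'s inside $(f^{n+1})^*\Om$, with endpoints on $K_{i-1}$ and $K_{j-1}$, and in particular is again horizontal.

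What remains --- and this is the part I expect to be the main obstacle --- is to verify that the shifted little Julia sets $K_{i-1}$, $K_{j-1}$ and the aligned arcs joining them genuinely lie inside $(f^{n+1})^*\Om$, so that $f^*\alpha$ keeps honest endpoints and its whole body stays in the pullback. Here the distance hypothesis is used: by Lemma~\ref{J-sets don't pull off} a pullback of $\Om$ retains every little Julia set of its window except the outermost $\bb$ at the low end (and $\bb'$ at the high end), together with the aligned arcs joining the survivors, and the endpoints of $\alpha$ lying at combinatorial distance at most $N-\bb$ from the base set $\KK_{l-N}$ places them, after the shift, at least $\bb$ inside the window of $(f^{n+1})^*\Om$, hence among the survivors; $f^*\alpha$, being homotopic to a chain of surviving aligned arcs, then cannot pull off. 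Making this airtight requires matching the $N-\bb$ slack against the $\bb,\bb'$ losses of Lemma~\ref{J-sets don't pull off} carried through the successive preimages, and above all controlling the \emph{body} of $\alpha$ near the low end of the window, where Lemma~\ref{J-sets don't pull off} gives no information --- there one must invoke horizontality of $\alpha$ together with the bounded-degree control of Lemma~\ref{bounded deg} to stop the arc drifting out of the domain. The remaining point, handled by $\bb<\qq$ and the hypothesis $(f^n)^*\Om\not\supset K_1$, is just to stay clear of both the critical sector $\Sec_0$ and the wrap-around sector $\Sec_{\qq-1}$ throughout the pullback.
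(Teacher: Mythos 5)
There is a genuine gap: you never rule out the one thing that ``pulling off'' actually means here, namely that the lift of $\alpha$ based on $K_{i-1}$ ends on the \emph{symmetric} preimage $K_{j-1}'=-K_{j-1}$ instead of on $K_{j-1}$. Your argument gets a univalent inverse branch $g$ over $(f^n)^*\Om$ (correct, and this is the same use of the hypothesis $K_1\not\subset (f^n)^*\Om$ as in the paper), but then you decide which preimages $g$ picks by asserting that $g$ ``acts on sectors and on the aligned arcs $\de_m$ as a clean shift of indices by $-1$'' and that $\alpha$ is homotopic rel endpoints to a concatenation of aligned arcs $\de_m$. The second claim is false in general: a horizontal arc in $(f^n)^*\Om$ is just an arc landing on two little Julia sets and can lie in an arbitrary homotopy class of $(f^n)^*\Om\sm\KK$; and the first claim begs the question, since the pullback components do in general spread over the symmetric sectors $\Zec_i$ containing the primed sets $K_m'$ --- that is exactly the pull-off phenomenon, and by Lemma~\ref{in p steps} \emph{every} horizontal arc does pull off within $p$ steps, which would be impossible if your ``clean shift'' reasoning were valid as stated. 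Relatedly, the obstacle you flag as the main one (controlling the body of $\alpha$ through ``successive preimages'', invoking Lemma~\ref{bounded deg}) is not where the difficulty lies: only one further pullback is involved, and the lift automatically stays in $(f^{n+1})^*\Om$ once its initial point does, being a connected subset of $f^{-1}\bigl((f^n)^*\Om\bigr)$.

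The paper closes the gap with a short exclusion argument that your set-up almost contains: by Lemma~\ref{J-sets don't pull off} and the distance hypothesis, \emph{both} non-primed preimages $K_{i-1}$ and $K_{j-1}$ lie in $(f^{n+1})^*\Om$; if the lift of $\alpha$ starting on $K_{i-1}$ pulled off, it would end on $K_{j-1}'$, forcing $K_{j-1}'\subset (f^{n+1})^*\Om$ as well; but $f$ is univalent on $(f^{n+1})^*\Om$ (critical value in $K_1$, which is not in $(f^n)^*\Om$), and $K_{j-1}$, $K_{j-1}'$ both map onto $K_j$, so they cannot both be contained in $(f^{n+1})^*\Om$ --- contradiction. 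Equivalently, in your language: injectivity of $f|(f^{n+1})^*\Om$ plus $K_{i-1},K_{j-1}\subset (f^{n+1})^*\Om$ forces $g(K_i)=K_{i-1}$ and $g(K_j)=K_{j-1}$, so $g(\alpha)$ is a legitimate lift; no homotopy to aligned arcs, sector bookkeeping, or wrap-around discussion is needed.
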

 
\begin{proof}
Let the arc $\alpha$ lands on $K_i$ and $K_j$. By Lemma \ref{J-sets
  don't pull off}, none of these  Julia sets get lost under the pullback
(i.e.,  both of them are contained in $(f^{n+1})^*\, \Om$).
Let $\beta= f^* \alpha$ be the lift of $\alpha$ that begins on $K_{i-1}$. 
If it pulls off then  it ends on $K_{j-1}'$,
implying$K_{j-1}'\subset ( f^{n+1})^*  \Om$. 

Since  $( f^n)^* \Om $ does not contain $K_1$, 
the map $f: ( f^{n+1})^*\, \Om \ra ( f^n)^*\,  \Om $ is univalent. 
It follows that $K_{j-1}\not\subset ( f^{n+1})^*\,  \Om $ -- contradiction. 
\end{proof}

 \section{Dynamical weights}

\sss{Local weights}
We will now consider a renormalizable $\psi$-ql map
$$  F\equiv (f,i): U' \ra U $$
with little  Julia sets $ J_n =\di K_n $, $n=0, 1, \dots, p-1$, and their
preimages $J_n' = \di K_n' = -J_n$, $n=1,\dots, p-1$.
(As usual, we identify $J_n/ K_n$  in $U'$ with $J_n/K_n$ in $U$.)
% We let $\JJ= \cup J_n$ and $\JJ' = \cup J_n'$. 

The following lemma shows that 
% Let us consider
all the local weights $\WW_n^\KK = \WW (J_n) $ are $2-$comparable:

\begin{lem}\label{comparison of dyn loc w}
We have:
$$
       \WW_1^\KK \leq \dots \leq \WW_{p-1}^\KK  \leq \WW_0^\KK \leq 2\WW_1^\KK
$$
\end{lem}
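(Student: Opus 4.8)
The plan is to read off the inequalities by running the cycle $K_0\to K_1\to\cdots\to K_{p-1}\to K_0$ through two maps between surfaces: the dynamics $f$ (a branched covering) and the $\psi$-ql embedding $i$. Set $\KK=\bigcup_{n=0}^{p-1}K_n$, let $S=U\setminus\KK$ — so that $\WW_n^{\KK}=\WW_S(J_n)$ is the local weight of the boundary circle $J_n=\partial K_n$ of $S$ — and let $\hat S=U'\setminus f^{-1}(\KK)$. Since $f$ maps $K_n$ onto $K_{n+1}$ (indices mod $p$) with local degree $2$ when $n=0$ (the critical point $0$ lies in $K_0$) and with local degree $1$ otherwise, one checks that $f^{-1}(\KK)=\KK\cup\bigcup_{1\le n\le p-1}K_n'$, so in particular $\hat S\subset S$ and each $J_n$ is a boundary component of both.

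The first ingredient is that, because the unique critical point of $f$ is in $K_0\subset f^{-1}(\KK)$, the restriction $f\colon\hat S\to S$ is an \emph{unbranched} holomorphic covering of degree $2$, carrying the boundary circle $J_n$ of $\hat S$ onto $J_{n+1}$ in $S$ with degree $d_n$, where $d_0=2$ and $d_n=1$ for $1\le n\le p-1$. By the covering case of Lemma~\ref{generaltransform rules}(i) — equivalently by \eqref{geod and annuli} and the fact that $f$ is a local hyperbolic isometry, so that the closed geodesic around $K_n$ in $\hat S$ covers the one around $K_{n+1}$ in $S$ exactly $d_n$ times — I get
\[
   \WW_{\hat S}(J_n)=d_n\,\WW_S(J_{n+1}),\qquad 0\le n\le p-1 .
\]
The second ingredient is pure monotonicity: since $\hat S\subset S$, Ahlfors--Schwarz gives $\rho_{\hat S}\ge\rho_S$ on $\hat S$, hence the geodesic homotopic to $J_n$ is at least as long in $\hat S$ as in $S$, and \eqref{geod and annuli} turns this into
\[
   \WW_S(J_n)\le\WW_{\hat S}(J_n),\qquad 0\le n\le p-1 .
\]
(One must \emph{not} quote Lemma~\ref{generaltransform rules}(i) for this, as the inclusion $\hat S\hookrightarrow S$ is not proper: the circles $\partial U'$ and $J_n'=\partial K_n'$ bound $\hat S$ but sit in the interior of $S$.)

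Combining the two displays finishes it. For $1\le n\le p-1$ one has $\WW_{n+1}^{\KK}=\WW_S(J_{n+1})=\WW_{\hat S}(J_n)\ge\WW_S(J_n)=\WW_n^{\KK}$ (with $\WW_p^{\KK}:=\WW_0^{\KK}$), giving $\WW_1^{\KK}\le\WW_2^{\KK}\le\dots\le\WW_{p-1}^{\KK}\le\WW_0^{\KK}$; and for $n=0$ one has $\WW_0^{\KK}=\WW_S(J_0)\le\WW_{\hat S}(J_0)=2\,\WW_S(J_1)=2\,\WW_1^{\KK}$.

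I expect the only genuinely delicate points to be (a) the combinatorial bookkeeping that confines the degree-$2$ branching of $f$ entirely to $K_0$ — so that one full turn around the cycle picks up exactly one factor of $2$ rather than forcing all the $\WW_n^{\KK}$ to be equal — and (b) getting the direction right in the monotonicity step, namely that removing extra little Julia sets can only \emph{increase} the local weight of a surviving boundary circle (the opposite of how fluxes behave), so that it combines correctly with the covering equality rather than against it.
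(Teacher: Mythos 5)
Your proof is correct, and there is in fact nothing in the paper to compare it with: Lemma \ref{comparison of dyn loc w} is stated without any proof (only a suppressed editorial note pointing to the Transformation Rules), so your argument supplies the intended one. Both ingredients are sound: the covering-case equality $\WW_{\hat S}(J_n)=d_n\,\WW_S(J_{n+1})$, justified either by the covering clause of Lemma \ref{generaltransform rules} or directly by (\ref{geod and annuli}) together with the fact that the unbranched degree-two covering $f\colon U'\setminus f^{-1}(\KK)\to U\setminus\KK$ is a local hyperbolic isometry; and the Schwarz monotonicity $\WW_S(J_n)\le\WW_{\hat S}(J_n)$, where your refusal to quote Lemma \ref{generaltransform rules}(i) is well taken, since for a non-proper map (boundary circles of $\hat S$ such as $\partial U'$ and the $J_n'$ lying in the interior of $S$) that inequality, as stated, points the wrong way, whereas the geodesic-length argument gives the correct direction. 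The one wording to adjust: in the $\psi$-ql setting $i$ is an immersion rather than an embedding, so ``$\hat S\subset S$'' is not literal; but your second step only needs that $i\colon U'\setminus f^{-1}(\KK)\to U\setminus\KK$ is a holomorphic map carrying the end at $K_n$ to the end at $K_n$ (this is exactly the identification the paper makes), and then the same Schwarz comparison of geodesic lengths, hence of the widths of the covering annuli, goes through unchanged, yielding $\WW_n^\KK\le d_n\WW_{n+1}^\KK$ around the cycle and the stated chain.
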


\bignote{Prepare the ransformation Rule!} 

We let $\WW_\loc^\KK =  \WW_0^\KK \equiv \max \WW_k^\KK $.

The local weight $\WW(\di U)= (1/2) \, \WW(\di U') $
is called $\WW^\ver$.

\sss{Local fluxes}
%Given an $1\leq n\leq p-1$, 
%let us consider  the domain $ D_n $
%%% the group of all little Julia set 
%consisiting of  two adjacent sectors, $n$th and $(n+1)$st.
%%% and all local edges connecting these Julia sets.  
%%%Let $W_\loc^\perp (n)$ be the total flux passing through these edges,
%%%plus the sum of all local weights through the little Julia sets
%%%involved (of course, in the locally bounded case,
%%%we can take only one of them). 
%Let us define the corresponding {\em local flux} as   $W_\loc^\perp (n) := \WW(D_n) $.
%

For $0\leq n\leq p-1$, 
let us define {\em local flux}  $\WW_n^\Hub $
as the sum of all $\WW(K_m)$ and $\WW^\perp(G_i)$ comprising the
path $\gamma_n$ in the Hubbard tree $\Hub$.

\bignote{Need to extend the above discussion of fluxes to surfaces with
  punctures (at $\alpha$ and $\alpha'$).} 
  
The paths $\gamma_n$ form a Markov tiling of $\Hub$,
so we can consider  the corresponding $p\times p$  {\em Hubbard matrix} $A_\Hub$. 
Applying the Transformation Rules  (Lemma \ref{generaltransform rules})
and the Parallel Law (Lemma \ref{transform rule for concatenation}), we obtain

\begin{lem}
      $\displaystyle {\WW_n^\Hub \leq \sum_{A_{nm} =1} \WW_m^\Hub } $
\end{lem}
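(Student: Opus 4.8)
The plan is to deduce the estimate directly from the transformation scheme for the arcs $\gamma_n$ recorded just above, together with the additivity of $\WW_n^\Hub$ over the edges and little Julia sets comprising $\gamma_n$.

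First I would fix $n$ and write $\WW_n^\Hub=\sum_i\WW^\perp(G_i)+\sum_m\WW(K_m)$, the total of the fluxes through the edges $G_i\subset\gamma_n$ and the local weights of the little Julia sets traversed by $\gamma_n$; likewise $\sum_{A_{nm}=1}\WW_m^\Hub$ is the same sum over the edges and traversed Julia sets of the concatenation $f(\gamma_n)=\bigcup_{A_{nm}=1}\gamma_m$ prescribed by the scheme. Because the critical value $f(0)\in f(K_0)=K_1\subset\Kfilled$, the map $f$ restricts to an \emph{unbranched} degree-two covering $f\colon U'\setminus f^{-1}(\Kfilled)\to U\setminus\Kfilled$; by the transformation scheme and Lemma~\ref{bounded deg} each $\gamma_n$, away from its endpoints, lies in the domain of this covering, $f$ carries $\gamma_n$ homeomorphically onto $f(\gamma_n)$, and it matches the edges of $\gamma_n$ with those of $f(\gamma_n)$, and the traversed Julia sets of $\gamma_n$ with those of $f(\gamma_n)$, bijectively and with local degree one. (The one genuine asymmetry is $f(\gamma_{p-1})=\gamma_0^+$, which is only part of $\gamma_0$ although the matrix records $A_{p-1,0}=1$ and so charges all of $\gamma_0$ on the right; this is exactly why the statement is an inequality, not an identity.)

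Then I would apply the Transformation Rules and the Parallel Law term by term. For an edge $G_i$, the covering case of Lemma~\ref{generaltransform rules}(ii) gives $\WW^\perp(G_i)=\WW^\perp(f(G_i))$, with equality and no $-2$ correction; for a traversed little Julia set $K_m$, the degree-one covering case of Lemma~\ref{generaltransform rules}(i) gives $\WW(K_m)=\WW(f(K_m))$. The only edge of $\gamma_n$ not matched with a single edge of $f(\gamma_n)$ occurs when $f(\gamma_n)=\gamma_{n+1}\cup\gamma_0$: it straddles the preimage of $\alpha$ and maps onto the concatenation, through $\alpha$, of the two $\alpha$-adjacent edges of $\gamma_{n+1}$ and of $\gamma_0$, and there the Parallel Law (Lemma~\ref{transform rule for concatenation}) bounds the flux loss by $O(1)$, which is absorbed into the desired inequality. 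Summing all these contributions along $\gamma_n$ gives $\WW_n^\Hub\le\sum_{A_{nm}=1}\WW_m^\Hub$.

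I expect the main obstacle to be the bookkeeping at the critical fibre: one has to check, via Assumption E2 and Lemma~\ref{bounded deg}, that each $\gamma_n$ meets $0$ (equivalently the set $K_0$) only at an endpoint, so that the clean covering-case rules apply along its interior, and one has to track how the two halves $\gamma_0^\pm$, the fold of $f$ at $K_0$, and the convention defining $\WW_0^\Hub$ interact, verifying that the residual non-bijective behaviour is precisely of the sign the ``$\le$'' can absorb. Everything away from the critical fibre is a routine term-by-term application of the two cited lemmas.
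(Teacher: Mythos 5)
Your proposal is essentially the paper's own argument: the paper offers no proof beyond invoking the Transformation Rules (Lemma \ref{generaltransform rules}) and the Parallel Law (Lemma \ref{transform rule for concatenation}), which is exactly what you apply term by term along the transformation scheme for the arcs $\gamma_n$. One caveat: the covering case compares weights computed in $U'\setminus f^{-1}(\Kfilled)$ with weights computed in $U\setminus\Kfilled$, so your termwise equalities $\WW(K_m)=\WW(f(K_m))$ and $\WW^\perp(G_i)=\WW^\perp(f(G_i))$ are really inequalities in the needed direction after one further application of the inclusion rules (cf.\ Lemma \ref{comparison of dyn loc w}), and an edge of $\gamma_n$ may also straddle preimages $-K_j$ of little Julia sets on the image path, not only the preimage of $\alpha$ --- but both points are handled by the same two lemmas, at the paper's own level of detail.
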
  

Putting this together with Lemma \ref{bounded deg}, we conclude:

\begin{lem}\label{loc fluxes} 
  All local fluxes $W_n^\Hub  $ are comparable,
  with a constant  depending only on $\bb$.
%  the combinatorial bounds,  i.e., on  the number of eyes). 
Hence
$
     \WW(\SAS) \asymp p \WW_\loc^\Hub.  
$
\end{lem}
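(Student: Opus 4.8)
The plan is to read the shape of the Hubbard matrix $A_\Hub$ off the transformation scheme for the arcs $\gamma_n$, use Lemma~\ref{bounded deg} to see that $A_\Hub$ is a cyclic shift perturbed only on an $O(\bb)$-block of rows, combine this with the transfer estimate $\WW_n^\Hub\le\sum_m(A_\Hub)_{nm}\WW_m^\Hub$ of the preceding lemma together with its near-reverse, and then feed the resulting comparability into the thin--thick estimate for proper domains.

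First I would record the structure of $A_\Hub$: for $1\le n\le\qq-2$ the only entry in row $n$ is $(A_\Hub)_{n,n+1}=1$; at $\gamma_0$ only the degree-two map $\Ups'\to\Sec_1$ enters; and by assumption E3 only the $O(\bb)$ rows indexed by the $n$ with $K_n\subset\Sec_{\qq-1}\cup\Zec_{\qq-1}$, together with the row $\gamma_{p-1}\to\gamma_0^+$, carry extra ``return'' entries, each of these rows being supported --- by Lemma~\ref{bounded deg} --- on $\le\bb+1$ of the arcs $\gamma_0,\dots,\gamma_\bb$ with total mass bounded in terms of $\bb$. The transfer estimate then gives $\WW_1^\Hub\le\WW_2^\Hub\le\dots\le\WW_{\qq-1}^\Hub$ along the shift-chain, and pushing the inequality around the $O(\bb)$ return rows and through $\gamma_{p-1}\to\gamma_0^+\to\gamma_1$ bounds $\WW_{\qq-1}^\Hub$ by $\WW_1^\Hub$ plus a $\bb$-fold multiple of $\WW_0^\Hub\le 2\WW_1^\Hub$, hence by $C(\bb)\,\WW_1^\Hub$. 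So the $\WW_n^\Hub$ with $1\le n\le\qq-1$ are mutually comparable up to a factor $C(\bb)$, and the remaining $O(\bb)$ weights get sandwiched between $\WW_1^\Hub$ and $C(\bb)\WW_1^\Hub$ the same way; the lower bounds needed here come from the near-reverse estimate $\WW_n^\Hub\ge\sum_m(A_\Hub)_{nm}\WW_m^\Hub-O(\bb)$, valid because $\Kfilled'=f^{-1}(\Kfilled)$ makes $f\colon U'\setminus\Kfilled'\to U\setminus\Kfilled$ an unbranched covering, so the equality case of Lemma~\ref{generaltransform rules} applies cell by cell while Lemma~\ref{bounded deg} keeps the number of cells --- and hence the error from the Parallel Law, Lemma~\ref{transform rule for concatenation} --- down to $O(\bb)$. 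This yields $\max_n\WW_n^\Hub\le C(\bb)\min_n\WW_n^\Hub$; writing $\WW_\loc^\Hub:=\max_n\WW_n^\Hub$, in particular $\sum_n\WW_n^\Hub\asymp p\,\WW_\loc^\Hub$.

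For the ``hence'', I would decompose $\Hub$ into its limbs at $\alpha$ and at $-\alpha$: since each sector $\Sec_i$, $1\le i\le\qq-1$, carries the single little Julia set $K_i$, the arcs $\gamma_1,\dots,\gamma_{\qq-1}$ lie in pairwise disjoint limbs, so $\sum_{n=1}^{\qq-1}\WW_n^\Hub$ already accounts, with bounded multiplicity, for every Julia-set weight and every edge flux of $\Hub$ outside the $K_0$-limb, whose total weight is $O(\bb\,\WW_\loc^\Hub)$ by Lemma~\ref{comparison of dyn loc w} and the comparability above. Hence the total weight $\BW(D)=\sum\WW(J_n)+\sum\WW^\perp(G_j)$ of the proper domain $D$ cut from $U\setminus\Kfilled$ along $\Hub$ satisfies $\BW(D)\asymp\sum_n\WW_n^\Hub\asymp p\,\WW_\loc^\Hub$, and Corollary~\ref{thin-thick for domains} (with $|\chi|\asymp p$, the $O(p)$ error negligible in the regime of interest where $\WW_\loc^\Hub$ is large), together with its evident extension to the full diagram $\SAS_D$, gives $\WW(\SAS)\asymp\BW(D)\asymp p\,\WW_\loc^\Hub$. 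The crux is the first point --- that the wrap-around part of $A_\Hub$ is governed by $\bb$ and not by the unbounded period $p$, which is exactly the content of Lemma~\ref{bounded deg} --- together with the bookkeeping showing that the square-buffer corrections and the punctures at $\alpha,-\alpha$ cost only $O(\bb)$ per weight $\WW_n^\Hub$.
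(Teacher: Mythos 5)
The paper treats this lemma as an immediate consequence: it simply combines the unnumbered transfer estimate $\WW_n^\Hub\le\sum_{A_{nm}=1}\WW_m^\Hub$ with Lemma~\ref{bounded deg}, so there is no detailed argument to compare with. Your main chain is exactly that intended derivation, carried out explicitly: rows $1,\dots,\qq-2$ of $A_\Hub$ are a pure shift, the wrap-around is confined to the $O(\bb)$ rows with $K_n\subset\Sec_{\qq-1}\cup\Zec_{\qq-1}$ plus $\gamma_{p-1}\mapsto\gamma_0^+$, and chasing the forward inequality around the cycle gives $\max_n\WW_n^\Hub\le C(\bb)\,\WW_1^\Hub$ and $\WW_n^\Hub\ge\WW_1^\Hub$ for $1\le n\le\qq-1$. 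That part, and the passage from comparability to $\WW(\SAS)\asymp p\,\WW_\loc^\Hub$ via the thin--thick decomposition (Corollary~\ref{thin-thick for domains}, with bounded multiplicity of the edges in the paths $\gamma_n$ and the usual additive $O(p)$ absorbed), is fine and consistent with what the paper intends.

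The gap is in the lower bounds for the $O(\bb)$ exceptional fluxes ($\gamma_0^\pm$ and $\gamma_n$ with $\qq\le n\le p-1$). You correctly notice that the forward inequalities alone do not force these to be large --- indeed the system $\WW_n\le\WW_{n+1}$, $\WW_n\le\WW_{n+1}+\WW_0$ on the special rows, $\WW_{p-1}\le\WW_0^+$, $\WW_0^\pm\le\WW_1$ is satisfied by taking $\WW_1=\dots=\WW_{\qq-1}=\WW_0^\pm=1$ and $\WW_n$ arbitrarily small for $\qq\le n\le p-1$, so some genuinely new input is needed. But the input you propose, the ``near-reverse'' estimate $\WW_n^\Hub\ge\sum_m(A_\Hub)_{nm}\WW_m^\Hub-O(\bb)$, is not delivered by the justification you give. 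The equality case of Lemma~\ref{generaltransform rules} for the covering $f\colon U'\sm f^{-1}(\Kfilled)\to U\sm\Kfilled$ identifies the flux of (a piece of) $\gamma_n$ \emph{computed on the restricted surface} $U'\sm f^{-1}(\Kfilled)$ with the flux of its image computed on $U\sm\Kfilled$; but the quantity $\WW_n^\Hub$ appearing in the lemma lives on $U\sm\Kfilled$, and the only comparison between the two surfaces provided by Lemma~\ref{generaltransform rules}(ii) is one-sided: restriction can only \emph{increase} a flux, which is precisely what yields the forward inequality again. For the reverse direction you would need to know that deleting the extra boundary components of $U'\sm f^{-1}(\Kfilled)$ (the symmetric Julia sets $-K_j$ and the smaller outer boundary $\di U'$) changes the fluxes through the Hubbard edges only by an additive constant, and nothing in the quoted lemmas (nor the Parallel Law, Lemma~\ref{transform rule for concatenation}, whose inequality also goes the other way) gives that; in general such a restriction can increase a flux by an arbitrary amount. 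So as written the sandwiching of the exceptional $O(\bb)$ fluxes is unproved, and this is exactly the point where an additional dynamical or combinatorial argument (beyond the transfer matrix bookkeeping) has to be supplied. Note also that for the ``hence'' you do not need Lemma~\ref{comparison of dyn loc w}: the weight of the $K_0$-limb is just $\WW_0^\Hub$, which is controlled by the comparability itself; invoking the $\KK$-scale there risks conflating the two scales the paper explicitly warns may be incomparable.
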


We let $\WW^\Hub_\loc : = \max \WW^\Hub_n$.
Note that by definition $\WW^\Hub_\loc \geq \WW^\KK_\loc$
but {\em a priori} they can be incomparable.    
We will see below that this is not the case in our situation. 

\msk

\bignote{Now $\WW^\Ray_n$ is put on the same footing.
Call them ``external'' fluxes? (while  the fluxes through $\Hub$
``internal''?
Use notation $\WW_n$, $\WW_n^{\mathrm in}$, and $\WW_n^\ext$?} 

\sss{Segment diagram weight} 
% \sss{Bound on the segment weights}
% \sss{Self-intersecting segments}

We call a segment $\si$  in the translation region
``self-intersecting''  if 
its translation $\si'$  by $-1$ crosses itself
(amounting to self-intersection on the quotient  cylinder.) 

\begin{lem}\label{self-intersecting segments} 
  The total weight of self-interseting segments in  the translation
  region is $O(p+\WW_\loc^\Hub)$.   \note{Check!!} 
\end{lem}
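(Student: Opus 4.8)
The plan is to control self-intersecting segments by showing that each one, together with the obstruction created by its own $(-1)$-translate, forces a definite amount of local flux to be trapped, and then to bound how many such segments can coexist. First I would set up the picture on the quotient cylinder $\Cyl$ obtained from the translation region by the deck transformation generated by the shift $z\mapsto z-1$: a segment $\si$ in the translation region descends to a curve $\bar\si$ on $\Cyl$, and ``self-intersecting'' means precisely that $\bar\si$ has a self-crossing. Such a crossing produces an essential sub-loop $\gamma_{\si}$ on $\Cyl$ (the innermost loop cut off by the self-intersection), and the key geometric fact I would invoke is a version of Lemma \ref{flux vs weight}: the weight of this loop is at least $\tfrac12\bar\WW(\si)$ as soon as $\bar\WW(\si)>2/\pi$, so an expensive self-intersecting segment creates an expensive closed curve.

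Next I would argue that these closed curves $\gamma_\si$ are essentially nested or disjoint, because the segments live on the plane (more precisely, on the immersed foliation $\FF^\perp$ that, being a canonical foliation in the universal cover, has no forbidden crossings — cf.\ the remark after the Poincaré return map lemma). This lets me organize them into chains of nested loops on $\Cyl$. Each nesting level corresponds to surrounding at least one little Julia set $K_n$ in the translation region (a loop on $\Cyl$ surrounding none of the $K_n$ is trivial, by the same argument as in the ``No closed curves'' subsection: its covering annulus would have width $<1$, so it carries no weight), and there are only $p$ such Julia sets. Hence the total number of ``levels'' — equivalently the number of self-intersecting segments of weight exceeding the threshold — is $O(p)$, and since each individual segment weight is at most $\WW_\loc^\Hub$ (every segment in the translation region is a piece of a canonical arc-segment, whose weight is bounded by the local flux by Lemma \ref{loc fluxes}), the total weight of those segments is $O(p\,\WW_\loc^\Hub)$; the remaining self-intersecting segments each weigh $O(1)$ and, being pairwise disjoint canonical objects, number $O(p)$, contributing $O(p)$. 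Adding the two contributions gives the bound $O(p+\WW_\loc^\Hub)$ — and in fact $O(p\,\WW_\loc^\Hub)$, so I would double-check whether the intended statement is $O(p+\WW_\loc^\Hub)$ (which would require a sharper counting, perhaps that all but a bounded number of the nested loops are mutually homotopic on $\Cyl$, as happens in the ``No closed curves'' proof when the minimal-enclosure curve is pulled back).

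The main obstacle I expect is exactly this last counting refinement: controlling not just how many homotopy classes of self-intersecting segments occur but their \emph{total weight}, and in particular deciding whether nesting can genuinely accumulate weight $\asymp p\,\WW_\loc^\Hub$ or whether the homotopy-rigidity phenomenon (two nested loops enclosing the same number of Julia sets must be homotopic in $\C\sm\KK$, used in the closed-curve lemma) collapses the count so that the stated $O(p+\WW_\loc^\Hub)$ holds. A secondary technical point is making Lemma \ref{flux vs weight} applicable here: that lemma is stated for a segment with both ends on a single arc $\alpha$ landing on opposite sides, so I would first replace $\si$ by a concatenation realizing the self-crossing as such a configuration on $\Cyl$, absorbing the bookkeeping into the $O(p)$ error via the Parallel Law (Lemma \ref{transform rule for concatenation}).
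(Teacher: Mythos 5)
There is a genuine gap, and you have flagged it yourself: your nesting/counting argument only delivers $O(p\,\WW_\loc^\Hub)$, not $O(p+\WW_\loc^\Hub)$. The weaker bound is vacuous for the purpose of this lemma, since by Lemma \ref{loc fluxes} the total weight of the whole arc-segment diagram is itself $\asymp p\,\WW_\loc^\Hub$; the entire point is that self-intersecting segments carry only an \emph{additively} small portion of it. No refinement of the purely topological counting on the quotient cylinder (nested loops, homotopy rigidity, weight-versus-flux for closed curves) is going to recover the additive bound, because your argument never uses the map $f$: the statement is genuinely dynamical, and a non-dynamical configuration can perfectly well have many disjoint heavy self-crossing segments. (Also, the closed-curve machinery you lean on, such as the ``flux vs weight'' lemma, is not part of the toolkit the paper actually develops here, so even the multiplicative bound would rest on imported material.)

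The missing idea is to pull the segment back rather than to count configurations. If $\si$ is self-intersecting, its $-1$-translate $\si'$ crosses $\si$; but in the translation region an \emph{unbroken} pullback $f^*(\si)$ would be represented exactly by $\si'$, and then two segments of the canonical segment diagram of $U'\sm\KK'$ would cross each other, which is impossible since the canonical quadrilaterals are disjoint. Hence every self-intersecting segment breaks under $f^*$, and Lemma \ref{key estimate} bounds the total weight of all breaking segments by $\BW(\SSS_{D'})-\BW(\SSS_D)+O(p+|\chi|)$, i.e.\ by a \emph{difference} of total diagram weights plus $O(p)$, which is $O(p+\WW_\loc^\Hub)$. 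It is precisely this maximality-of-the-canonical-diagram structure --- broken weight controlled by a global weight difference, not by a per-segment count --- that produces the additive error term your approach cannot reach.
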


\begin{proof}
For a self-intersecting sgment $\si$, the translation $\si'$ cannot
represent the lift $f^*(\si)$, since otherwise tho segments in $U'\sm
\KK'$ would intersect. The alternative is that $f^*(\si)$ is broken.
By Lemma \ref{key estimate}, the total weight of these segments is
$O(p+\WW^\Hub_\loc)$.
\end{proof}

Let us call a segment $\si$ {\em peripheral} if its translation $\si'$
exits the translation region.   \note{define!} 
Since the total weight of these segments is  $O(\WW^\Hub_\loc)$, \note{OK?}
% $O(\WW^\perp_\loc+\WW^\perp(\RR_0)+\WW^\ver)$,
we conclude: 

\begin{cor}
  The total weight of self-intersecting and peripheral segments in the
  translation region is
  $O(p+\WW_\loc^\Hub)$. 
% $O(p+\WW_\loc^\Hub+\WW^\perp(\RR_0)+\WW^\ver)$. 
\end{cor}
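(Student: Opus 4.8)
The plan is to split the family into its self-intersecting part and its peripheral part, bound each separately, and add the two error terms. The self-intersecting part requires nothing new: Lemma~\ref{self-intersecting segments} already asserts that the total weight of self-intersecting segments in the translation region is $O(p+\WW_\loc^\Hub)$, so that contribution is already of the required form.

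For the peripheral segments I would argue by localization. A segment $\si$ supported in the translation region is peripheral exactly when its translate $\si'$ leaves that region; since the translation is, up to collars of combinatorial size $O(\bb)$ at the two ends, the one-step index shift $\Sec_n\mapsto\Sec_{n-1}$ induced by $f^p$, this forces at least one endpoint of $\si$ to lie on one of the $O(\bb)$ edges sitting in those bounded end-collars. Hence, by disjointness and maximality of the segment diagram, the total weight of peripheral segments is bounded by the sum of the fluxes through those $O(\bb)$ boundary edges; by Lemma~\ref{loc fluxes} all the local fluxes $\WW_n^\Hub$ are comparable with a constant depending only on $\bb$, and since $\bb\le\bar\bb$ is bounded this sum is $O(\WW_\loc^\Hub)$.

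Adding the two estimates yields $O(p+\WW_\loc^\Hub)+O(\WW_\loc^\Hub)=O(p+\WW_\loc^\Hub)$, which is the assertion. The step that needs the most care is the localization claim — precisely delineating which edges count as ``boundary edges'' of the translation region, and verifying that the approximate shift structure of $f^p$ on the sectors $\Sec_n$ genuinely pushes every segment not anchored in the end-collars back into the region. This is exactly where the hypothesis that $\bb$ is bounded (assumption E3) is used essentially; everything else is bookkeeping with the Transformation Rules (Lemma~\ref{generaltransform rules}) and the Parallel Law (Lemma~\ref{transform rule for concatenation}) already recorded above.
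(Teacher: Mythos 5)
Your skeleton is exactly the paper's: the self-intersecting part is disposed of by Lemma~\ref{self-intersecting segments}, and the corollary then reduces to the claim that the peripheral segments carry total weight $O(\WW_\loc^\Hub)$ --- a claim the paper states without further argument. So the real content of your proposal is the localization argument for the peripheral part, and that is where there is a genuine gap.

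The step that fails as stated is the assertion that peripherality ``forces at least one endpoint of $\si$ to lie on one of the $O(\bb)$ edges sitting in the end-collars.'' It does not: a canonical segment both of whose endpoints lie on edges deep inside the translation region can still, within its homotopy class, dip into the extreme sectors (for instance pass around $K_1$ close to $\alpha$) and return, and then its translate by $-1$ --- which, incidentally, is the shift induced by $f$ itself (each $\Sec_i\mapsto\Sec_{i+1}$ under $f$), not by $f^p$ --- exits the region although neither endpoint is anchored near the ends. All that peripherality forces is that the segment meets, up to homotopy, the boundedly many curves bounding the end-collars; so the weight bound must be run through the fluxes across those curves (using disjointness of the canonical rectangles), not through endpoint incidence on them. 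Repairing this exposes a second gap you gloss over: the curves bounding the translation region are external rays attached to $\alpha$ (and pieces of $\di U$), and the flux through such a ray is not one of the local fluxes $\WW_n^\Hub$ compared in Lemma~\ref{loc fluxes}; a priori the resulting error involves quantities like $\WW^\perp(\Ray_0)$ or $\WW^\ver$ rather than $\WW_\loc^\Hub$, and controlling it is precisely the delicate point of this corollary, not bookkeeping with Lemma~\ref{generaltransform rules} and Lemma~\ref{transform rule for concatenation}.
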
  

\bignote{The error  term used to contain $\WW^\perp(\RR_0)+\WW^\ver$.
Why?}

The same analysis applies to the complement of the translation region
\note{be more precise!} leading to the following conclusion:

\begin{cor}\label{long and peripheral} 
  The total weight of self-intersecting and peripheral segments   is
   $O(p+\WW_\loc^\Hub)$. 
%  $O(p+\WW_\loc^\Hub+\WW^\perp(\RR_0)+\WW^\ver)$. 
\end{cor}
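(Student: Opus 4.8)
The plan is to upgrade the translation--region estimates already in hand --- Lemma~\ref{self-intersecting segments} and the Corollary just above, bounding the self--intersecting and peripheral segments \emph{inside} the translation region --- to all of $U\sm\KK$, by splitting the surface into the translation region and its complement. Under the Elephant eye assumption E3 (so $\bb\leq\bar\bb$ while $\qq$ is unbounded), the complement of the translation region --- the mouth $\Ups$, the $O(\bb)$ sectors carrying the adjunct little Julia sets $K_{\qq},\dots,K_{p-1}$, and the two boundary layers of combinatorial depth $\bb$ --- is a bordered surface of bounded topological type, $|\chi|=O(1)$ with an implied constant depending only on $\bar\bb$; by Lemma~\ref{loc fluxes} all its local fluxes are moreover comparable to $\WW^\Hub_\loc$.

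First I would dispose of every segment that is not contained in the translation region. Such a segment has an endpoint on an edge of the complement, so its homotopy type is pinned down by data in a surface of bounded complexity together with the interface edge it enters through; there are only boundedly many such types, each of weight at most $\WW^\Hub_\loc+O(1)$. Hence the total weight of all segments meeting the complement is $O(\WW^\Hub_\loc)$, which covers the self--intersecting and peripheral ones among them, as well as the peripheral segments of the translation region (whose translates exit into the complement by definition). For the segments genuinely interior to the translation region, the Corollary just above already bounds the self--intersecting and peripheral part by $O(p+\WW^\Hub_\loc)$. Summing the two contributions gives the claim.

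The step I expect to be the main obstacle is making precise the assertion that \emph{the same analysis applies}: the self--intersection argument of Lemma~\ref{self-intersecting segments} used that on the translation cylinder the dynamics is, up to bounded error, a shift, so that a self--intersecting segment is forced to have a broken lift under $f$, whereupon the Key Estimate (Lemma~\ref{key estimate}) applies with error $O(p+\WW^\Hub_\loc)$. Away from the translation region $f^p$ is only a map of degree $\leq 2$ carrying each Hubbard edge into a bounded union of consecutive edges (Lemma~\ref{bounded deg}), not a shift; one must therefore replace the translation by the relevant first--return map, recheck that a self--intersecting (resp.\ peripheral) segment still forces either a bounded homotopy type or a broken lift, and confirm that the count of edges crossed under a single pullback --- controlled by $\bb$ --- keeps the error at $O(p+\WW^\Hub_\loc)$ rather than reintroducing terms such as $\WW^\ver$ or $\WW^\perp(\RR(0))$ (cf.\ the marginal remark preceding the Corollary). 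Once this is in place, the rest is the same Gauss--Bonnet and maximality accounting as before.
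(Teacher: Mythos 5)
There is a genuine gap, and it comes in two places. First, your reduction of the complement part rests on a false dichotomy: a segment that is not contained in the translation region need \emph{not} have an endpoint on an edge of the complement --- it can have both endpoints on edges deep inside the translation region and merely pass through $\Ups\cup\bigcup\Zec_i$ (e.g.\ dipping around $K_0$ or over the empty $\Zec$-sectors). For such pass-through segments your "bounded complexity'' count gives nothing, and even for segments that do land on a complement edge the claim that "there are only boundedly many homotopy types'' is wrong: the other endpoint may lie on any of the $\asymp p$ edges of the translation region. (For those landing segments the \emph{conclusion} can be salvaged, since the total weight of segments landing on a fixed edge is bounded by the flux through that edge, which is $O(\WW_\loc^\Hub)$ by Lemma~\ref{loc fluxes}, and there are boundedly many complement edges; but that is a different argument from the one you give.) The pass-through segments are the real problem: the only way to bound them "statically'' is by the flux across the two rays bounding the translation region, or by going over the top near $\di U$, i.e.\ by quantities like $\WW^\perp(\Ray)$ and $\WW^\ver$ --- and these are precisely \emph{not} controlled by $O(p+\WW_\loc^\Hub)$ at this stage (this is why $\WW^\ver$ reappears as a separate term in Corollary~\ref{total segments weight}, and why the marginal remark questions an old error term $\WW^\perp(\RR_0)+\WW^\ver$). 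So your first step does not bound all segments meeting the complement, and hence does not dispose of the self-intersecting and peripheral segments there.

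Second, the part you label "the main obstacle'' is in fact the entire content of the Corollary as the paper proves it: one runs the argument of Lemma~\ref{self-intersecting segments} again off the translation region --- a self-intersecting (or peripheral) segment cannot have its pullback represented by an embedded translate, so its lift under $f^*$ breaks, and the Key Estimate (Lemma~\ref{key estimate}) bounds the broken weight by $O(p+\WW_\loc^\Hub)$, with Lemma~\ref{bounded deg} keeping the bookkeeping of crossed Hubbard edges bounded in terms of $\bb$. You describe what would have to be checked (replace the translation by the return dynamics, verify the broken-lift alternative, verify the error stays $O(p+\WW_\loc^\Hub)$) but do not carry it out, so the proposal defers exactly the step it was supposed to supply. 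In short: the shortcut in your second paragraph fails for pass-through segments, and the dynamical argument in your third paragraph is announced rather than executed; as written the Corollary is not proved.
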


With this in hands, only short segments in the translation region are left unattended.

\begin{lem}
  The total weight of short segments is $O(p+\WW^\ver)$.
\end{lem}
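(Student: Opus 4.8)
The plan is to strip off, via Corollary~\ref{long and peripheral}, the short segments that are self-intersecting or peripheral, so that the remaining short segments lie in some sector $S_n$, $1\le n\le\qq-1$, of the translation region (outside the translation region every segment is peripheral, the region being bounded). On that region $f$ behaves as a near-shift: by assumption E1 it maps $S_n$ univalently onto $S_{n+1}$, so the $f$-pullback of a short segment in $S_{n+1}$ is again a short segment in $S_n$ of the \emph{same} weight (Lemma~\ref{generaltransform rules}, equality case for coverings); a short segment that breaks under this pullback necessarily has a piece in $\Ups'$, hence is peripheral and already counted. Thus the remaining short segments organize into $f$-orbits of length $\le p$ along which the weight is constant, and in each cell adjacent to $\di U$ the short segments are ``parallel copies'' of those in the neighbouring cells.

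Next I would split a short segment $\si\subset S_n$ according to the Jordan domain $D_\si$ it cuts off together with sub-arcs of the two bounding rays, calling $\si$ \emph{inner} if $D_\si$ misses the outer boundary $\di U$. Up to proper homotopy in the cell punctured along $\KK$ there are only $O(1)$ inner classes per cell: those carrying no little Julia set (weight $O(1)$ each), and those encircling a single $K_m$ met by the cell. For the latter, closing $\si$ up along the two bounding rays produces a simple closed curve encircling $K_m$ alone; the pull-back argument of the ``No closed curves'' lemma (using the degree-two renormalization $f^p:K_m\to K_m$) bounds the width of such a curve by $O(1)$, and since an encircling segment has weight comparable to this closed curve, $\WW(\si)=O(1)$. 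The same pull-back, together with the disjointness of the canonical segment diagram, bounds the number of such segments in one cell by $O(1)$. Hence the inner short segments contribute $O(p)$ in total.

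It remains to bound the outer short segments, and this is where the $\WW^\ver$ term comes from. I would argue globally: the outer short segments are pairwise disjoint, and each separates an arc of $\di U$ from $\KK$, so the corresponding crossing sub-families of the canonical foliation are disjoint and each can be continued along $\di U$ into the family of curves winding once around $\di U$ inside $U\sm\KK$. Since the total width of that family is the width $\WW^\ver$ of the covering annulus of $\di U$, this gives $\sum_{\text{outer }\si}\WW(\si)\le\WW^\ver+O(p)$, the error absorbing the $\asymp\qq$ ray-stubs and the $\psi$-ql collar of $U'$ in $U$. Adding the two contributions yields $O(p+\WW^\ver)$. I expect this final step to be the main obstacle: the transformation rules for widths (the Parallel Law and the covering relations) are naturally phrased as \emph{lower} bounds on sums of weights, so turning the modulus of the covering annulus of $\di U$ into an \emph{upper} bound for the outer short segments genuinely uses disjointness of the canonical diagram, and the $O(p)$ bookkeeping along the separating rays and the collar must be carried out carefully.
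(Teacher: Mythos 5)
Your reduction to the translation region and your treatment of the \emph{inner} short segments are workable in substance (though the paper handles loops around a single little Julia set more cheaply: such a loop is dual to the local weight, so its width is at most $\WW(K_m)^{-1}=O(1)$, and no closed-curve pull-back argument is needed). The genuine gap is your last step, which you flag as the main obstacle but do not close: the inequality $\sum_{\si\ \mathrm{outer}}\WW(\si)\leq \WW^\ver+O(p)$ does not follow from any of the transformation rules, and as a purely conformal statement it is false. To sum widths of disjointly supported families against one comparison family, every curve of every local family must \emph{contain} a curve of the comparison family inside its own support; a cross-cut joining two consecutive edges near $\di U$ contains no loop around $\di U$, and ``continuing it along $\di U$'' runs the inequality backwards --- extending curves only decreases width, and the extended families are no longer disjointly supported. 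Worse, the true relation between the two quantities goes the other way: a curve homotopic to $\di U$ in $U\sm \KK$ must pass on the $\di U$-side of every little Julia set, so it essentially crosses each of the $p$ outer quadrilaterals, and the Series Law then gives $\WW^\ver$ of order at most $\min_n \WW(\si_n)/p$. Hence the annulus modulus can never dominate the sum of the outer weights unless each outer weight is already $O(1)$ --- which is exactly what needs to be proved, and which cannot be proved without the dynamics.

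What controls these configurations in the paper is a translation argument, not a comparison with the annulus around $\di U$. A short segment is continued (through the canonical lamination) into one of four things: a segment already accounted for in Corollary~\ref{long and peripheral}; a vertical arc-segment --- this is the only place the $\WW^\ver$ term enters; a loop around a single little Julia set, of weight $O(\WW_\loc^{-1})$ by duality with the local flux; or a ``snake'' alternating inner and outer short segments, which crosses its own translate under $f$ and therefore, by the same mechanism as in Lemma~\ref{self-intersecting segments} (its pullback breaks, cf.\ Lemma~\ref{key estimate}), has weight at most $1$, contributing $O(p)$ in total. Your proposal never invokes this self-crossing-under-translation mechanism, and without it the outer short segments --- precisely the ones your final step was supposed to handle --- remain uncontrolled.
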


\begin{proof}
Indeed, any short segment extends to either one of the segments listed
in Corollary \ref{long and peripheral},
or a loop surrounding a little Julia set, 
or  a vertical arc-segment,
or else to a ``snake'' composed of inner and outer short segments.
Note now that
loops around little Julia sets have small weight
(since they are dual to the local fluxes),
while snake's weights are bounded  by one (since they
intersect their translations). The conclusion follows. 
%and their total weight can be bounded as in
%Lemma \ref{self-intersecting segments}.
\end{proof}  

Putting these together, we obtain:

\begin{cor}\label{total segments weight}
  The total weight of the segment diagram in $U\sm \KK$ is
  bounded by  $O(p + \WW^\Hub_\loc + \WW^\ver)$.
\end{cor}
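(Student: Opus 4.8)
The plan is to split the segment diagram of $U\sm\KK$ into three families --- self-intersecting segments, peripheral segments, and short segments --- each of which has already been estimated, and then to add the bounds. The first step is to check that these three families exhaust the diagram: a segment that is neither self-intersecting nor peripheral must be short. Granting this, I would invoke Corollary \ref{long and peripheral} to bound the total weight of the self-intersecting and peripheral segments by $O(p+\WW_\loc^\Hub)$, and the lemma on short segments to bound the total weight of the short segments by $O(p+\WW^\ver)$.

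Adding these contributions gives
$$
   \BW(\SSS_{U\sm\KK}) \;\leq\; O(p+\WW_\loc^\Hub) + O(p+\WW^\ver) \;=\; O(p+\WW_\loc^\Hub+\WW^\ver),
$$
which is exactly the asserted estimate. Implicitly this uses the mechanism of the short-segments lemma, which routes every short segment either into a self-intersecting or peripheral segment already counted in Corollary \ref{long and peripheral}, or into a loop around a little Julia set (of small weight, being dual to a local flux $\WW_n^\Hub$), or into a vertical arc-segment (absorbed into $\WW^\ver$), or into a bounded-weight ``snake'', of which there are only $O(p)$ homotopy classes.

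The step I expect to be the main obstacle is the exhaustiveness claim of the first paragraph. In the translation region a non-peripheral long segment and its $(-1)$-translate are forced to cross by the planar mechanism already used in the proof of Lemma \ref{self-intersecting segments}, so such a segment is self-intersecting; but to turn this into a clean statement one still has to pin down the contribution of the complement of the translation region --- the very point flagged just before Corollary \ref{long and peripheral}. Once the three families are known to cover the whole diagram, the corollary follows by the bookkeeping above.
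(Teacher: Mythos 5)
Your proposal is correct and follows essentially the same route as the paper, which derives Corollary~\ref{total segments weight} simply by ``putting together'' Corollary~\ref{long and peripheral} and the short-segments lemma, exactly the summation you perform. The exhaustiveness of the trichotomy (self-intersecting, peripheral, short) that you flag as the delicate point is likewise left implicit in the paper (``only short segments in the translation region are left unattended''), so your treatment is faithful to, and no less complete than, the original argument.
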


For $\la>1$, let us say that we  enjoy $\la$-{\em ampliphication} if
$ \WW^\ver \geq \la \WW^\KK_\loc$. 

\begin{cor}\label{total segments weight 2}
 For any $\la>1$,  either we enjoy $\la$-ampliphication 
  or else
  $$ \WW(\SSS_{U\sm \KK }) = O_\la (p + \WW^\Hub_\loc ). $$
\end{cor}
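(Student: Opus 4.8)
The plan is to obtain the statement as an immediate consequence of Corollary \ref{total segments weight}, by running a simple dichotomy on the size of $\WW^\ver$ relative to $\WW^\KK_\loc$. All of the analytic work has in fact already been done: the classification of segments in the translation region into self-intersecting, peripheral, and short ones, the Key Estimate (Lemma \ref{key estimate}), the comparability of local fluxes (Lemma \ref{loc fluxes}), and the bookkeeping for loops and snakes. What remains is purely formal.

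First I would record that Corollary \ref{total segments weight} supplies an \emph{absolute} constant $C$, independent of $\la$, with
$$
   \WW(\SSS_{U\sm \KK}) \leq C\,\bigl(p + \WW^\Hub_\loc + \WW^\ver\bigr).
$$
This is the substantive input; the point of the present corollary is only to trade the $\WW^\ver$ term for $\la$-amplification.

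Then I would split into two cases. If $\WW^\ver \geq \la\,\WW^\KK_\loc$, then by definition we enjoy $\la$-amplification and there is nothing to prove. Otherwise $\WW^\ver < \la\,\WW^\KK_\loc$. Since $\WW^\Hub_\loc \geq \WW^\KK_\loc$ by the inequality recorded right after Lemma \ref{loc fluxes}, this yields $\WW^\ver < \la\,\WW^\Hub_\loc$. Substituting back into the displayed bound, the $\WW^\ver$ term is absorbed:
$$
   \WW(\SSS_{U\sm \KK}) \leq C\,\bigl(p + \WW^\Hub_\loc + \la\,\WW^\Hub_\loc\bigr) = O_\la\bigl(p + \WW^\Hub_\loc\bigr),
$$
which is the asserted estimate.

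I do not expect any genuine obstacle in this final step; the only point that needs care is that the constant in Corollary \ref{total segments weight} is genuinely absolute, so that the entire $\la$-dependence of the conclusion is confined to the trivial dichotomy above and does not feed back into the earlier estimates. If one traces through the chain of lemmas feeding Corollary \ref{total segments weight}, the only constants appearing depend on $\bb$ (through Lemmas \ref{bounded deg} and \ref{loc fluxes}) and on the Margulis constant, never on $\la$, so this is automatic.
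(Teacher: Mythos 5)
Your proof is correct and is exactly the argument the paper intends: Corollary \ref{total segments weight 2} is stated as an immediate consequence of Corollary \ref{total segments weight}, with the $\WW^\ver$ term absorbed via the dichotomy $\WW^\ver \geq \la\,\WW^\KK_\loc$ (amplification) versus $\WW^\ver < \la\,\WW^\KK_\loc \leq \la\,\WW^\Hub_\loc$. Your remark that the constant in Corollary \ref{total segments weight} depends only on $\bb$ and absolute data, never on $\la$, is also consistent with the paper's chain of lemmas.
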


\sss{Arc-segment and long arc diagrams}

\begin{lem}\label{arc-segments and long arcs}
  The total weight of arc-segments and long%
\footnote{in both the translation region and its complement} 
arcs   in  $U\sm \KK$ 
  is  bounded by  the total weight of segments in $U\sm \KK$. 
% We have:
%$$
%    \WW(\AS) + \WW(\AAA^\long) \leq \WW
%$$
\end{lem}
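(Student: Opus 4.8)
The plan is to cut every long arc and every arc-segment in $U\sm\KK$ along the edge diagram --- the external rays $\Ray(n)$ landing on the $K_n$ and, after puncturing at $\alpha$ and $-\alpha$, the rays $\Ray_i$, $-\Ray_i$ meeting at those punctures --- and then to sum up by the Parallel Law. The heuristic is that cutting an arc transversally to its canonical foliation can only increase the conformal width of the resulting piece (monotonicity of the modulus, as in the mechanism of Lemma~\ref{tranform rule for rectangles}), so each middle segment of a long arc dominates that arc in weight, while the Parallel Law keeps the total contribution over a fixed segment class bounded by the weight of that class.

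First I would establish the combinatorial input. Take a long arc $\alpha$ joining $K_i$ to $K_j$ with $|i-j|\ge 2$. Under the Elephant eye combinatorics, the transformation scheme for the Hubbard-tree arcs $\gamma_n$ (together with the bounded-degree Lemma~\ref{bounded deg}) shows that the tree representative of $\alpha$ runs through the $\alpha$-fixed point, so $\alpha$ must cross at least two of the edges meeting at $\alpha$ or at $-\alpha$. Consequently its canonical realization decomposes, up to proper homotopy, as a concatenation
$$
   \alpha \;=\; \eta_0 * \sigma_1 * \cdots * \sigma_k * \eta_1,\qquad k\ge 1,
$$
with $\eta_0,\eta_1$ arc-segments and $\sigma_1,\dots,\sigma_k$ segments. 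Similarly, an arc-segment $\alpha$ joining $K_i$ to an edge $G$ non-incident to $K_i$ must, on leaving $K_i$, exit the face of the edge complex containing $K_i$; cutting along the first edge met one writes $\alpha = \eta_0*\sigma_1*\cdots*\sigma_k$ with $k\ge 1$, where (invoking Lemma~\ref{bounded deg} to control the boundedly many short-range homotopy classes near each $K_i$) $\eta_0$ is either absent or a trivial arc-segment, so that $\sigma_1\in\SSS_{U\sm\KK}$. In all cases every piece of the decomposition carries weight at least $\WW(\alpha)-O(1)$, and in particular each $\sigma_l$ lies in the segment diagram $\SSS_{U\sm\KK}$.

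Then I would add up. Assign to each long arc, resp.\ arc-segment, $\alpha$ one of the segments $\sigma(\alpha)=\sigma_l$ from its decomposition. For a fixed segment class $\sigma$, the canonical quadrilaterals $\Pi_\alpha$ of all $\alpha$ with $\sigma(\alpha)=\sigma$ are realized disjointly --- the whole canonical arc-segment diagram $\SAS_{U\sm\KK}$ is realized disjointly --- hence their $\sigma$-portions $\Pi_\alpha\cap\Pi_\sigma$ are disjoint sub-quadrilaterals of $\Pi_\sigma$ joining the same pair of edges in parallel, and the Parallel Law gives $\sum_{\sigma(\alpha)=\sigma}\WW(\alpha)\le\sum_{\sigma(\alpha)=\sigma}\WW\bigl(\Pi_\alpha\cap\Pi_\sigma\bigr)\le\WW(\sigma)$. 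Summing over $\sigma\in\SSS_{U\sm\KK}$ yields $\BW(\AAA^\lo)+\BW(\AS)\le\BW(\SSS_{U\sm\KK})$. The step I expect to be the main obstacle is the combinatorial claim of the second paragraph: that a long arc genuinely crosses \emph{two} edges, so that a real middle segment appears; this has to be argued carefully for arcs that wind deep into the translation region or run the long way around the $\alpha$-fixed point, and it is exactly where hypotheses E1--E3 and Lemma~\ref{bounded deg} are used. A secondary point is verifying that cutting $\Pi_\alpha$ at an edge crossing really produces sub-quadrilaterals of the $\Pi_{\sigma_l}$ with the correct comparison of widths, which should follow fibrewise from Lemma~\ref{tranform rule for rectangles} and absorb the $O(1)$ errors into the disjointness sum.
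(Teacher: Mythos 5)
Your argument is genuinely different from the paper's, and it has a real gap at exactly the step you flagged. The paper does not prove this lemma statically: its mechanism is dynamical. It argues that \emph{all} arc-segments and long arcs get broken under the pullback/immersion $i: U'\sm \KK \ra U\sm \KK$, by the same translate--self-intersection argument used for self-intersecting segments (Lemma \ref{self-intersecting segments}: an unbroken lift would force two disjoint elements of the canonical diagram in $U'\sm\KK$ to cross); then Lemma \ref{broken arc-segments} bounds $\BW(\AS_D^\br)$ by $\BW(\SSS_{D'})$ and $\BW(\AAA_D^\lobr)$ by $\BW(\SSS_{D'})+\BW(\AS_{D'})$, and monotonicity under the inclusion (Lemma \ref{generaltransform rules}) transfers these to the segment weight of $U\sm\KK$. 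Your proposal replaces this by a purely static cut-and-sum argument, and the load-bearing claim --- that every long arc and every arc-segment, cut along the edge system, produces a genuine \emph{middle} segment whose class dominates the arc's weight --- is not established and is not a routine verification. A long arc that essentially crosses only one edge decomposes as two arc-segments with no segment piece at all; an arc-segment that ends on an edge lying on the boundary of the same face as its initial Julia set may cross no edge whatsoever; in such cases your assignment $\sigma(\alpha)$ does not exist and the Parallel Law gives nothing. The hypotheses you invoke, E1--E3 and Lemma \ref{bounded deg}, describe how the Hubbard-tree edges are \emph{mapped by} $f$, not the static incidence structure of the cell decomposition, so they cannot by themselves exclude these configurations --- and the footnote of the lemma insists precisely on arcs outside the translation region (near $\alpha$, $-\alpha$, $K_0$, the lateral sectors), where single-crossing and no-crossing behaviour is most likely. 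Indeed, for an abstract marked surface the statement ``arc-segment and long-arc weight is bounded by segment weight'' is simply false (make two non-adjacent marked intervals occupy almost the whole boundary of one face); so some dynamical input of the kind the paper uses is unavoidable.

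Two secondary points. First, the correct direction of your width comparison (truncating the leaves of $\FF_\alpha$ only increases width, so a middle segment class has weight at least $\WW(\alpha)-2$) and the disjointness of the canonical quadrilaterals are fine; but the summation should be phrased via disjoint subfamilies of the full path family computing $\ol\WW(\sigma)$, not via ``$\Pi_\alpha\cap\Pi_\sigma$'' (the canonical quadrilateral $\Pi_\sigma$ is a specific region that need not contain the middle portions of the $\Pi_\alpha$), and it yields $\sum_{\sigma(\alpha)=\sigma}\WW(\alpha)\leq \WW(\sigma)+2$, hence an overall bound $\BW(\SSS_{U\sm\KK})+O(p)$ rather than the clean bound in the statement --- tolerable for the later applications, but weaker than what the paper's proof gives. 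Second, if you want to salvage your route, the missing combinatorial claim is essentially equivalent to the breaking statement the paper proves dynamically; so the economical fix is to argue as the paper does: show these arcs break under $f^*$ because otherwise they would intersect their own translates, and then quote Lemma \ref{broken arc-segments} together with monotonicity of segment weights under $U'\sm\KK \subset U\sm\KK$.
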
 

\begin{proof}
For the same reason as for long segments in $U\sm \KK$,
 all arc-segments and long arcs get broken under the immersion 
$i:  U'\sm \KK \ra U\sm \KK $.  By   Lemma \ref{broken arc-segments},
their total weight is bounded by the total weight of segments in 
$U'\sm \KK$. But the latter is bounded by the total weight of segments
in $U\sm \KK$. 
\end{proof}

Combining this with Corollary \ref{total segments weight 2},
we obtain:

\begin{cor}\label{everything} 
  For any $\la>1$,  either we enjoy $\la$-ampliphication  or else
  $$ \WW(\SSS) + \WW(\AS) + \WW(\AAA^\lo)  + \WW(\AAA^\pe) = O_\la (p + \WW^\Hub_\loc ), $$
where all the diagrams are considered in $U\sm \KK$.   
\end{cor}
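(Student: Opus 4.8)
The plan is to deduce the estimate by a mechanical combination of the two facts already in place: the bound on the total segment weight in $U\sm\KK$ and the domination of all the other diagrams by the segment diagram. Fix $\la>1$ and suppose we do not enjoy $\la$-amplification, i.e.\ $\WW^\ver < \la\,\WW^\KK_\loc$. Since $\WW^\KK_\loc\le\WW^\Hub_\loc$ by definition, Corollary~\ref{total segments weight 2} applies and gives
$$
   \WW(\SSS_{U\sm\KK}) = O_\la\bigl(p + \WW^\Hub_\loc\bigr).
$$
It therefore suffices to bound each of $\WW(\AS)$, $\WW(\AAA^\lo)$ and $\WW(\AAA^\pe)$, all taken in $U\sm\KK$, by $O\bigl(\WW(\SSS_{U\sm\KK})\bigr)$.

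For the arc-segments and long arcs this is exactly Lemma~\ref{arc-segments and long arcs}. For the peripheral arcs I would run the same breaking argument: a peripheral arc $\alpha$ is one whose $(-1)$-translation exits the translation region, so the lift $i^{*}\alpha$ under the immersion $i\colon U'\sm\KK\to U\sm\KK$ cannot be a single arc of the same combinatorial type, and hence $\alpha$ breaks under $i$. The proof of Lemma~\ref{broken arc-segments} (which only uses that a broken piece contributes a segment or arc-segment in $U'\sm\KK$ of weight at least $\WW(\alpha)$) then bounds $\WW(\AAA^\pe)$ by $\WW(\SSS_{U'\sm\KK}) + \WW(\AS_{U'\sm\KK})$, and the monotonicity of the total segment and arc-segment weights under $i$ — the step that closes the proof of Lemma~\ref{arc-segments and long arcs} — turns this into $O\bigl(\WW(\SSS_{U\sm\KK})\bigr)$ once more.

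Assembling the three bounds with the displayed estimate for $\WW(\SSS_{U\sm\KK})$ gives
$$
   \WW(\SSS) + \WW(\AS) + \WW(\AAA^\lo) + \WW(\AAA^\pe) = O_\la\bigl(p + \WW^\Hub_\loc\bigr),
$$
which is the assertion. The one point needing care — and what I expect to be the main obstacle — is the treatment of $\AAA^\pe$: one must make sure that \emph{peripheral} is defined so that every peripheral arc truly breaks under $i$, so that Lemma~\ref{broken arc-segments} (or a routine extension of it to short arcs) applies; it is conceivable that $\AAA^\pe$ is already subsumed in $\AAA^\lo$, in which case nothing extra is needed. Granting that, the rest is pure bookkeeping: only $\WW^\KK_\loc\le\WW^\Hub_\loc$ and the monotonicity of diagram weights under the immersion are used, and no new $\chi$-dependent error is introduced beyond the $O(p)$ already absorbed, since $|\chi(U\sm\KK)| = O(p)$ under the Elephant eye assumption.
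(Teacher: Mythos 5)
Your argument is essentially the paper's: the paper proves this corollary by the one-line combination of Lemma~\ref{arc-segments and long arcs} (domination of arc-segments and long arcs by the segment weight via the breaking argument of Lemma~\ref{broken arc-segments}) with Corollary~\ref{total segments weight 2}, exactly as you do. Your explicit treatment of $\AAA^\pe$ by the same breaking mechanism fills in a point the paper leaves implicit (its footnote to Lemma~\ref{arc-segments and long arcs} is meant to subsume those arcs), so the proposal is correct and follows the same route.
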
  

\sss{Conclusions}

Since the total weight $\WW(\SAS)$ of segments, arcs, and arc-segments in $U\sm \KK$
is of order $p \WW_\loc^\Hub$ (by Lemma \ref{loc fluxes}), we conclude: 

\begin{cor}\label{small proportion}
   For any $\la>1$,  either we enjoy $\la$-ampliphication,  or else
 \begin{equation}\label{small proportion estimate}
   \WW(\SSS) + \WW(\AS) + \WW(\AAA^\lo) + \WW(\AAA^\pe)    = o (\WW(\SAS) )\
   {\mathrm{as}} \  p\to \infty, \ % {\mathrm{and}}\
   \WW(\SAS)\to \infty,
\end{equation}    
where all the diagrams are considered in $U\sm \KK$.     
\end{cor}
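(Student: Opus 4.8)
The corollary is a quick consequence of the two main structural estimates already in hand, Corollary \ref{everything} and Lemma \ref{loc fluxes}; essentially all that remains is to chase the error terms through the asymptotic regime in the statement. Fix $\la>1$ and assume we do \emph{not} enjoy $\la$-amplification, since otherwise the first alternative of the corollary holds outright. Then Corollary \ref{everything} applies in its second alternative and gives
$$
\WW(\SSS) + \WW(\AS) + \WW(\AAA^\lo) + \WW(\AAA^\pe) = O_\la\bigl(p + \WW^\Hub_\loc\bigr),
$$
with all diagrams taken in $U\sm\KK$.

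Next, Lemma \ref{loc fluxes} gives the two-sided comparison $\WW(\SAS) \asymp p\,\WW^\Hub_\loc$, with an implied constant depending only on $\bb$. Dividing the previous display by $\WW(\SAS)$ we obtain
$$
\frac{\WW(\SSS) + \WW(\AS) + \WW(\AAA^\lo) + \WW(\AAA^\pe)}{\WW(\SAS)} = O_\la\!\left( \frac{1}{p} + \frac{1}{\WW^\Hub_\loc} \right),
$$
so it suffices to show the right-hand side is $o(1)$. The term $1/p$ tends to $0$ because $p\to\infty$. For the term $1/\WW^\Hub_\loc$, recall $\WW^\Hub_\loc \geq \WW^\KK_\loc$, and note that in the degeneration the statement is designed for (the one where $\WW^\KK_\loc\to\infty$) this forces $\WW^\Hub_\loc\to\infty$; equivalently, since $\WW(\SAS)\asymp p\,\WW^\Hub_\loc$ can blow up only through $p$ or through $\WW^\Hub_\loc$, the joint hypothesis $p\to\infty$, $\WW(\SAS)\to\infty$ is exactly the regime in which both $p$ and $\WW^\Hub_\loc$ are large, so $1/\WW^\Hub_\loc\to 0$ as well. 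Combining, the ratio is $o(1)$, which is precisely \eqref{small proportion estimate}.

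\textbf{Where the work is.} There is no genuine obstacle here: the substantive content has already been extracted in Corollary \ref{everything} (which itself rests on the key estimate Lemma \ref{key estimate} and the case analysis of segments, arc-segments and long/peripheral arcs) and in the comparison of local fluxes Lemma \ref{loc fluxes}. The one point that deserves a careful sentence is the last step — ruling out a degenerate branch in which $\WW^\Hub_\loc$ stays bounded while $\WW(\SAS)\to\infty$ only through $p$ — which is handled by the two-sided nature of Lemma \ref{loc fluxes} together with the standing assumption that $\la$-amplification fails (so that $\WW^\ver$, and hence the local data, is controlled by $\WW^\KK_\loc$).
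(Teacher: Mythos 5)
Your argument is exactly the paper's: the corollary is obtained by combining Corollary \ref{everything} with Lemma \ref{loc fluxes} and dividing the error $O_\la(p+\WW^\Hub_\loc)$ by $\WW(\SAS)\asymp p\,\WW^\Hub_\loc$. One caveat: your claim that the hypotheses $p\to\infty$, $\WW(\SAS)\to\infty$ force $\WW^\Hub_\loc\to\infty$ is not a genuine deduction (one could have $\WW^\Hub_\loc$ bounded and $\WW(\SAS)\to\infty$ through $p$ alone), but the paper's own statement is equally informal on this point, the intended regime being the one where the local weight also becomes large.
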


%\begin{lem}
%  The total weight of segments,  arc-segments, and long arcs in $U\sm \Kfilled$
%is  %  $O(p+\WW^\Hub_ \loc + \WW^\ver)$.
% $O(p + \WW^\ver)$. \note{elaborate!} 
%\end{lem}

In summary, we obtain:

\begin{thm}\label{short arcs}
  For any $\la>1$,  either we enjoy $\la$-ampliphication,  or else
  the elephant's share of the total weight $\WW(\SAS)$ is supprted on
  short arcs of the translation region%
\footnote{which is the same as ``short arcs in the complement of the
  translation region''}
  $($provided $p$ and $\WW(\SAS)$ are sufficiently big$)$. 
\end{thm}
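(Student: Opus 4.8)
The plan is to read the statement off Corollary~\ref{small proportion} together with the decomposition of the total weight $\WW(\SAS)$ of the arc--segment diagram in $U\sm\KK$ into its homotopy types. First I would record
$$
  \WW(\SAS) \;=\; \WW(\SSS) + \WW(\AS) + \WW(\AAA^\sh) + \WW(\AAA^\lo),
$$
the four summands being the total weights of segments, arc--segments, short arcs, and long arcs respectively. By Lemma~\ref{loc fluxes} the left-hand side is $\asymp p\,\WW^\Hub_\loc$, hence it tends to $\infty$ with $p$ under the running hypotheses, and in particular $O(\WW^\Hub_\loc)=o(\WW(\SAS))$.

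Next I would separate out the short arcs that do not sit well inside the translation region. By the Elephant eye assumptions (E3, $\bb\le\bar\bb$) the complement of the translation region carries only $O(\bb)=O(1)$ little Julia sets, so the total arc-weight landing there is at most $\sum\WW_n^\KK$ over those sets, which is $O(\WW^\KK_\loc)=O(\WW^\Hub_\loc)=o(\WW(\SAS))$. A short arc lying near the boundary of the translation region whose translate exits the region is peripheral, hence already counted in $\AAA^\pe$. Writing $\AAA^\sh_{\mathrm{tr}}$ for the short arcs of the translation region, we therefore obtain
$$
  \WW(\AAA^\sh_{\mathrm{tr}}) \;\ge\; \WW(\AAA^\sh) - \WW(\AAA^\pe) - o(\WW(\SAS)).
$$

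Now suppose that we do not enjoy $\la$-amplification. Corollary~\ref{small proportion} gives $\WW(\SSS)+\WW(\AS)+\WW(\AAA^\lo)+\WW(\AAA^\pe)=o(\WW(\SAS))$; subtracting its first three terms from the decomposition above yields $\WW(\AAA^\sh)=\WW(\SAS)-o(\WW(\SAS))$, and then the displayed inequality gives $\WW(\AAA^\sh_{\mathrm{tr}})=\WW(\SAS)-o(\WW(\SAS))=(1-o(1))\,\WW(\SAS)$. That is precisely the assertion that, for $p$ and $\WW(\SAS)$ sufficiently big, the elephant's share of $\WW(\SAS)$ is supported on short arcs of the translation region.

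The one genuinely load-bearing point — the main obstacle — is the bookkeeping behind the middle step: one must be sure that every homotopy type of segment, arc, or arc--segment of non-negligible total weight that is \emph{not} a short arc of the translation region has indeed been deposited into one of the buckets $\SSS$, $\AS$, $\AAA^\lo$, $\AAA^\pe$, and that the residual types (short arcs living entirely in the complement of the region, loops around the little Julia sets, vertical arc--segments, and ``snakes'') contribute only $o(\WW(\SAS))$. This is exactly what is assembled in Corollaries~\ref{long and peripheral}, \ref{everything} and~\ref{small proportion}, so once those are in hand the present theorem is a soft consequence of Lemma~\ref{loc fluxes} and Corollary~\ref{small proportion}.
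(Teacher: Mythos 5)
Your argument is correct and follows the paper's own route: the paper's proof is exactly the one-line observation that the only weight unaccounted for on the left-hand side of the estimate in Corollary~\ref{small proportion} is that of short arcs, combined with $\WW(\SAS)\asymp p\,\WW^\Hub_\loc$ from Lemma~\ref{loc fluxes}. Your extra bookkeeping (splitting off short arcs outside the translation region, which carry only $O(\WW^\KK_\loc)=o(\WW(\SAS))$ since the complement of the translation region contains only $O(1)$ little Julia sets) just makes explicit what the paper leaves to the footnote.
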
  

\begin{proof}
  What is unaccounted in the left hand side of
  (\ref{small proportion estimate}) is the weight of short arcs.
\end{proof}  

\begin{cor}\label{loc weights comparison} 
%   Under the above circumstances, \note{informally speaking} 
% if there is no amplification, then
If the second option of the above Theorem is realized then 
  
\ssk\nin {\rm (i)} 
$\WW_\loc^\Hub\asymp \WW_\loc^\KK$. 

\ssk\nin {\rm (ii)}
For any $\kappa\in (0,1)$ and $\eps>0$,
if the period $p$ and the  local weight $\WW_\loc^\KK$ are sufficiently big
then for any $n \in [1,\kappa \qq] $
at least $ (1-\eps) $-proportion of the local weight $\WW_n^\KK$ is supported on
the two short arcs emanated from $K_n$.  
%
%\ssk\nin {\rm (ii)}
% There exist absolute $t$ and $C$ such that 
%for $n \leq \qq - t$ at least 
%$ \WW_n^\KK - C$ part of the local weight $\WW_n^\KK$ is supported on
%the two short arcs emanated from $K_n$.  
\end{cor}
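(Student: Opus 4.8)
The statement follows from Theorem \ref{short arcs}, in its second option, together with the weight comparisons of Lemmas \ref{comparison of dyn loc w} and \ref{loc fluxes} and a dynamical propagation argument. Throughout I assume we are in the regime underlying the second option (so that $\la$-amplification fails and Corollaries \ref{everything}, \ref{small proportion} apply), and in particular $\WW(\AAA^\sh)=(1-o(1))\,\WW(\SAS)$ as $p,\WW(\SAS)\to\infty$.

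\emph{Part (i).} Since $\WW_\loc^\Hub\ge\WW_\loc^\KK$ holds by definition, only the opposite bound is needed. Each short arc lands on two little Julia sets, and the total weight of all arc-segments landing on a fixed $K_n$ is at most $\WW_n^\KK$ (this is the landing inequality from the proof of Theorem \ref{thin-thick for S}); summing over $n$ gives $2\WW(\AAA^\sh)\le\sum_n\WW_n^\KK=O(p\,\WW_\loc^\KK)$ by Lemma \ref{comparison of dyn loc w}. On the other hand $\WW(\SAS)\asymp p\,\WW_\loc^\Hub$ by Lemma \ref{loc fluxes}, and $\WW(\AAA^\sh)=(1-o(1))\WW(\SAS)$ by hypothesis, so $p\,\WW_\loc^\Hub=O(p\,\WW_\loc^\KK)$, i.e. $\WW_\loc^\Hub\asymp\WW_\loc^\KK$.

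\emph{Part (ii), setup.} First one needs a \emph{local} thin--thick decomposition at each little Julia set: running the argument of Theorem \ref{thin-thick for S} inside the covering annulus $\A_n$ of $K_n$ (extended to accommodate the punctures at $\alpha,\alpha'$, as flagged earlier) should give
\[
  \WW_n^\KK \;=\; s_{n-1}+s_n+V_n+O(1),
\]
where $s_j$ is the weight of the short arc joining $K_j$ to $K_{j+1}$ and $V_n$ is the total weight of all non-short arc-segments landing on $K_n$ (long arcs and arc--segments). Summing over $n$ and using Corollary \ref{small proportion} together with part (i), one gets $\sum_n V_n=o\bigl(\WW(\SAS)\bigr)=o(p\,\WW_\loc^\KK)$; also $\WW_n^\KK\asymp\WW_\loc^\KK$ for every $n$ by Lemma \ref{comparison of dyn loc w}. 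Suppose now, toward a contradiction, that $V_{n_0}>\eps\,\WW_{n_0}^\KK$ for some $n_0\in[1,\kappa\qq]$; then $V_{n_0}>\tfrac{\eps}{2}\WW_\loc^\KK$.

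\emph{Part (ii), propagation and conclusion.} We propagate this concentration forward. By E1 the map $f$ carries $\Sec_i'$ univalently onto $\Sec_{i+1}$ for $1\le i\le\qq-2$, so for $0\le j\lesssim(1-\kappa)\qq$ the iterate $f^{\,j}$ is univalent on a union of consecutive sectors carrying $K_{n_0},\dots,K_{n_0+j}$; a non-short arc-segment landing on $K_{n_0}$ then pushes forward to a non-short arc-segment landing on $K_{n_0+j}$ of equal weight (being long, or having a gap endpoint, is preserved, and $f$ is a local covering off the critical point, so nothing breaks and no weight is lost on the confined part of the diagram). Hence $V_{n_0+j}\gtrsim\tfrac{\eps}{2}\WW_\loc^\KK$ for all these $\asymp(1-\kappa)\qq\asymp p$ indices, so $\sum_n V_n\gtrsim\eps\,(1-\kappa)\,p\,\WW_\loc^\KK$, contradicting $\sum_n V_n=o(p\,\WW_\loc^\KK)$ once $p$ and $\WW_\loc^\KK$ are large (in terms of $\eps,\kappa$). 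Therefore $V_n\le\eps\,\WW_n^\KK$ for all $n\in[1,\kappa\qq]$, and by the local decomposition the remaining $(1-\eps)$-proportion of $\WW_n^\KK$ (after absorbing the additive $O(1)$ using that $\WW_\loc^\KK$ is large) is carried by $s_{n-1}+s_n$, i.e. by the two short arcs emanating from $K_n$. The restriction $\kappa<1$ is exactly what leaves room to iterate $\asymp p$ times before reaching the ``turn'' near $K_\qq$.

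\emph{Main obstacle.} The delicate step is this forward propagation: one must check that the non-short arc-segments at $K_{n_0}$ do not, under iteration, shrink below threshold, break off short sub-arcs, or run into the turn near $K_\qq$ where E1 ceases to be translational. Arc-segments and long arcs whose geometry leaves the region on which $f^{\,j}$ is univalent are themselves peripheral and are already absorbed by Corollary \ref{everything}, so the propagation should be localized to the confined part of the bad diagram; isolating that part cleanly, together with pinning down the local thin--thick decomposition at $K_n$ in the presence of the $\alpha,\alpha'$ punctures, is where the real work lies.
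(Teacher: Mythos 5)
Your part (i) is essentially the paper's own argument and is fine. For part (ii), however, the paper goes in the opposite dynamical direction, and the step you yourself flag as the ``main obstacle'' is precisely where your version breaks. The paper's proof is short: by Corollary \ref{small proportion} (pigeonhole over the indices) there is at least one index $n_0\in[\kappa\qq,\qq)$ at which a $(1-\eps)$-proportion of $\WW_{n_0}^\KK$ sits on the two short arcs; one then \emph{pulls these short arcs back}. Under pullback the weight does not decrease (the covering $f: U'\sm f^{-1}(\KK)\ra U\sm\KK$ preserves the weight of the corresponding rectangle and the immersion $i$ into $U\sm\KK$ only increases it, exactly as in the proof of Lemma \ref{in p steps}), the pullbacks of short arcs in the translation region are again short arcs one index lower, and the monotonicity $\WW_n^\KK\leq \WW_{n_0}^\KK$ of Lemma \ref{comparison of dyn loc w} converts the lower bound $(1-\eps)\WW_{n_0}^\KK$ into $(1-\eps)\WW_n^\KK$ for every $n\leq n_0$, hence for all $n\in[1,\kappa\qq]$.

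Your forward propagation of the ``bad'' non-short weight has the inequalities going the wrong way. In this formalism a pushforward is a restriction to $U'\sm f^{-1}(\KK)$ followed by the covering $f$, and the restriction step is exactly where arcs break and weight is lost (this is the whole point of the pull-off machinery); Lemma \ref{generaltransform rules} gives weight monotonicity only in the pullback direction, so the claim that a non-short arc-segment at $K_{n_0}$ transports to one at $K_{n_0+j}$ ``of equal weight'' is unsupported. Moreover, such an arc-segment is not confined to the sectors on which $f^{\,j}$ is univalent (only its endpoint on $K_{n_0}$ is), and your fallback -- that escaping arcs are peripheral and ``absorbed by Corollary \ref{everything}'' -- does not help at a single index: that bound is $O_\la(p+\WW^\Hub_\loc)$, which still allows non-short weight comparable to $\WW_\loc^\KK$ at boundedly many indices, which is exactly what has to be excluded on $[1,\kappa\qq]$. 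Finally, your per-index decomposition $\WW_n^\KK=s_{n-1}+s_n+V_n+O(1)$ is not justified: the thin-thick error in Theorem \ref{thin-thick for S} is global, $O(|\chi|+|\II|)=O(p)$, and nothing prevents it from concentrating at particular indices. The paper's route avoids both difficulties: the pigeonhole already produces the near-equality at one far index $n_0$, and the pullback transfers a lower bound on short-arc weight directly, since the short-arc weight at $K_n$ is automatically at most $\WW_n^\KK$. To close your argument you would, in effect, have to reverse it into this pullback form.
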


\bignote{The error term in (ii) used to be additive $O(1)$.
It can be selected as $O(1/p)$.  }

\begin{proof}
(i)   Indeed, $\WW(\AAA^\sh) = O(p\, \WW_\loc^\KK)$
  while $\WW(\SAS) \asymp p\, \WW_\loc^\Hub $, and
  by   the last theorem, $\WW(\AAA^\sh) \sim \WW(\SAS) $. 

  \msk (ii) It follows from Corollary \ref{small proportion}
  that the assertion is true for some $n_0\in [\kappa \qq, \qq)$.
  Pulling the correspondng  short arcs back, we obtain the desired for all $n\leq n_0$. 
\end{proof}

% We arrive at the final conclusion of this section:
%
%Thus, up to this error, {\em  the whole weight of the canonical arc-segment
%diagram is alighned with the Hubbard tree. } 

% \input{returns}
%\input{comb}
%\input{scale}
%\input{CAD}
%%\input{cylinder}
%% \input{thin}

\section{Pull-off Argument: bounded combinatorics}

Below $f: U'\ra U$ is a renormalizable $\psi$-quadratic like map, and
 $p$ is  its renormalization period. 
Let $U^n$ stands for the range of $f^n$ 
 (so $U^n= f^{-(n-1)}(U)$ in the genuine quadratic-like case). 
Let $\AAA$ be the canonical WAD on $U\sm \KK$ 
and $\AAA^n$ be the one on $U^n\sm \KK$. 

\subsection{Fast pull-off}

\begin{lem}\label{sym vert arcs} 
% Let $\AAA$ be an invariant horizontal arc diagram. 
There exists a pair of symmetric vertical arcs in $U^p\sm f^{-1}\KK$ landing on $K_0$
that do not cross $\AAA^p$. 
Moreover, $f(\gamma^\pm)$ does not cross $\AAA^{p-1}$.  
\end{lem}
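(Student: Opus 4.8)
The plan is to split the statement into a symmetry reduction, an accessibility argument producing one vertical arc, and a covering argument for the image under $f$. First I would use that $f$ is even: the involution $\iota:z\mapsto -z$ preserves $U^p$, and since $f^{-1}(K_n)=K_{n-1}\cup K_{n-1}'$ cyclically in $n$ while $-K_0=K_0$, one gets $f^{-1}\KK=\KK\cup\KK'$, so $\iota$ also preserves this set and acts on $U^p\sm f^{-1}\KK$ fixing $K_0$; moreover $\iota$ carries the canonical diagram $\AAA^p=\AAA(U^p\sm\KK)$ to $\AAA^p_-:=\AAA(U^p\sm(K_0\cup\KK'))$, exchanging the two. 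Hence it suffices to produce a \emph{single} vertical arc $\gamma^+$ joining $\di K_0$ to $\di U^p$, lying in $U^p\sm(\KK\cup\KK')$ and crossing neither $\AAA^p$ nor $\AAA^p_-$; then $\gamma^-:=\iota(\gamma^+)$ completes a symmetric pair, and both $\gamma^\pm$ avoid $\AAA^p$ (for $\gamma^-$ because $\iota\AAA^p=\AAA^p_-$ and $\gamma^+$ avoids both).

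To build $\gamma^+$ I would show that $\di K_0$ is accessible from $\di U^p$ in the complement of $\KK\cup\KK'$ together with disjoint representatives of $\AAA^p\cup\AAA^p_-$ --- equivalently, that no horizontal arc of $\AAA^p\cup\AAA^p_-$ surrounds $K_0$. The skeleton is fixed by Corollary \ref{loc weights comparison}: essentially all the flux through $\di K_0$ is carried by the two short arcs joining $K_0$ to $K_1$ and to $K_{p-1}$, which are aligned with the Hubbard-tree edges $\gamma_0^\pm$; combined with E1--E2 (so that $K_0$ is the only little Julia set inside the central domain $\Ups$ and every $K_n,K_n'$ with $n\geq 1$ sits in a lateral sector), this isolates the portion of $\di K_0$ facing away from $\alpha$ and $-\alpha$, from which a path to $\di U^p$ through the gaps of the canonical lamination is unobstructed --- \emph{provided} no further arc of $\AAA^p\cup\AAA^p_-$ winds around $K_0$. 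Excluding such a winding arc is the step I expect to be the main obstacle: a weight estimate is not enough (Corollary \ref{loc weights comparison} only gives that competing arcs carry an $o(\WW_\loc^\KK)$-proportion of the flux), so I would argue topologically --- a winding arc would join two little Julia sets on opposite sides of $K_0$, and feeding it through $f$, using Lemma \ref{bounded deg} to keep the pull-back combinatorics ($\bb$ bounded) under control together with the exclusion of closed curves and Thurston obstructions carried out earlier, forces it to intersect itself or a canonical arc, which cannot happen since $f$ is not intermediately renormalizable. Granting this, $\gamma^+$ and hence the symmetric pair $\gamma^\pm$ exist.

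For the last assertion I would use that, the critical point lying in the removed set $K_0$, the map $f:U^p\sm(\KK\cup\KK')\to U^{p-1}\sm\KK$ is an \emph{unbranched} double covering. By the transformation rules for coverings (Lemma \ref{generaltransform rules}) and maximality of the canonical diagram, $\AAA^p$ dominates the $f$-pull-back of $\AAA^{p-1}$: every arc of $\AAA^{p-1}$ pulls back to arcs each of which is, inside $U^p\sm\KK$, isotopic either to an arc of $\AAA^p$ or to a trivial or vertical arc which, by taking $\gamma^\pm$ in the unbounded complementary face, one can arrange it to miss. Consequently a crossing of $f(\gamma^+)$ with $\AAA^{p-1}$ would produce a crossing of $\gamma^+$ with a pull-back arc, contradicting the construction, and likewise for $\gamma^-$. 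The routine points left to fill in are the simultaneous minimal-position bookkeeping for $\AAA^p$ and $\AAA^p_-$ and the verification that the ``primed'' pull-back components are genuinely harmless for a vertical arc landing on $K_0$.
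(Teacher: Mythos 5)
There is a genuine gap, and it sits exactly where you flag it yourself: the exclusion of a chain of canonical arcs and little Julia sets separating $K_0$ from $\di U^p$. You write ``Granting this\dots'' and offer only a sketch that leans on two things that are not available: (a) Corollary \ref{loc weights comparison}, which is conditional (it requires the non-amplification alternative and $p$, $\WW_\loc$ large), whereas the present lemma is stated and used unconditionally in the bounded-combinatorics pull-off argument; and (b) an ``exclusion of closed curves and Thurston obstructions carried out earlier,'' which is not actually established in the paper. So the central accessibility claim for $K_0$ --- the heart of your proof --- is not proved. The paper sidesteps this difficulty entirely: it does not try to show that $K_0$ itself is accessible. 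It first produces a vertical arc $\gamma_m$ in $U\sm\KK$ landing on \emph{some} $K_m$, $0<m\le p$, and disjoint from $\AAA$, by pure planar topology (descend from $\di U$ to a neighborhood of an arc of the planar graph $\AAA$ and follow it to one of its endpoints). It then transports this arc to $K_0$ dynamically: pulling $\gamma_m$ back by $f^m$ along the orbit $K_0\mapsto K_m$ yields, because the last pullback passes through the critical point in $K_0$, a symmetric pair $\gamma^\pm$ landing on $K_0$; these avoid $\AAA^m$ because $f^*(\AAA)$ dominates $\AAA$, and one restricts to $U^p$. In that construction the ``moreover'' clause is free, since $f(\gamma^\pm)$ is itself an iterated pullback of $\gamma_m$ and the same domination argument applies one level up.

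Your treatment of the ``moreover'' clause has a second, smaller gap of the same nature: knowing that $\gamma^\pm$ avoids $\AAA^p$ does not control crossings of $f(\gamma^\pm)$ with $\AAA^{p-1}$, because the $f$-pullback of a canonical arc of $\AAA^{p-1}$ need not be homotopic into $\AAA^p$ --- it can land on the primed Julia sets $\KK'$ or break --- and you defer exactly this point as ``routine bookkeeping.'' The symmetry reduction via $z\mapsto -z$ in your first paragraph is fine as far as it goes, but note that it forces you to avoid the larger diagram $\AAA^p\cup\iota(\AAA^p)$, making the unproved separation statement even harder than what the lemma needs. I would recommend replacing the direct accessibility argument by the pull-back mechanism: accessibility of \emph{some} $K_m$ is elementary, and the dynamics (domination of $\AAA$ by $f^*(\AAA)$ under the covering $f:U'\sm f^{-1}(\KK)\ra U\sm\KK$ together with the immersion) does the rest, with the symmetric pair and the statement about $f(\gamma^\pm)$ coming out of the construction itself.
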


\begin{proof}

  There exists a vertical arc $\gamma_m$ in $U\sm \KK$  with some $0< m\leq p$ landing on $K_m$
that does not cross $\AAA$.
To see it, realize $\AAA$ as a planar graph, %  with vertices $K_i$ and $\bC \sm U$,
go from $\di U$ down to a small neighborhood of some arc $\alpha\in \AAA$,  
and then follow this arc in an appropriate direction until you land on some $K_m$. 

Pulling $\gamma_m$  back by $f^m$,
we obtain a  pair of symmetric vertical  arcs $\gamma^\pm$
in $U^m \sm \KK$ landing on $K_0$,
They do not cross  $\AAA^m$ since $f^*(\AAA)$ domaintes $\AAA$. 
 Restricting the domain  $U^m$ further to $U^p$, we obtain the desired pair of arcs. 
  
\end{proof}

\begin{lem}\label{in p steps}
Let $\alpha\in \AAA^p$ be a horizontal arc. 
Then it pulls off in less than $p$ iterated pullbacks. 
\end{lem}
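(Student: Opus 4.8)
The plan is to follow the iterated $f$-pullbacks of $\alpha$ along the cyclic orbit $f(K_n)=K_{n+1}$ (indices mod $p$, $K_0\ni 0$) and to show that within $p-1$ steps a lift is forced to land on one of the primed little Julia sets $K_m'=-K_m$; since these are not among the marked little Julia sets, an arc that lands on one of them is no longer horizontal, i.e.\ it has pulled off. Write $\alpha=\alpha_0$ and let $\alpha_{k+1}$ be the lift of $\alpha_k$ issuing from the unprimed $f$-preimage of a chosen endpoint. The first observation, which is just the transformation scheme for the Hubbard-tree arcs $\gamma_n$ recorded just before Lemma~\ref{bounded deg} transported to an arbitrary horizontal arc, is that as long as $\alpha_k$ has not pulled off it is a horizontal arc joining two \emph{unprimed} little Julia sets, and each pullback lowers both endpoint-indices by one mod $p$ (using $f^{-1}(K_1)=K_0$ with degree two, and $f^{-1}(K_n)=K_{n-1}\sqcup K_{n-1}'$ with degree one for $n\neq 1$).

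Relabel so that $\alpha$ joins $K_i$ to $K_j$ with $0\le i<j\le p-1$. First I would pull back $i\le p-2$ times: either $\alpha$ has already pulled off, or $\alpha_i$ joins $K_0$ to $K_m$ with $m=j-i\in\{1,\dots,p-1\}$. The crux is then a single further pullback across the critical value $v=f(0)\in K_1$. Since $v$ lies in $\KK$, hence off $\alpha_i$, the preimage $f^{-1}(\alpha_i)$ is an unbranched double cover: two disjoint arcs swapped by the quadratic-like involution $\inv$. When $m\ge 2$ I would close $\alpha_i$ up to a loop by a reference arc disjoint from $v$ and argue, using the sector dynamics E1 (the univalent maps $f:\Sec_n'\to\Sec_{n+1}$, $f:\Zec_n'\to\Sec_{n+1}$ and the double cover $f:\Ups'\to\Sec_1$) propagated through the $i$ previous pullbacks, that this loop links $v$ an odd number of times; equivalently the two lifts of $\alpha_i$ are \emph{twisted}, so the one issuing from $K_{p-1}$ lands on $K_{m-1}'$, and $\alpha$ has pulled off in $i+1\le p-1$ steps.

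The remaining case is $m=1$ — the endpoints become consecutive, which also covers the situation where $\alpha$ is a short arc from the outset. Here the extra ingredient is exactly the double covering $f:\Ups'\to\Sec_1$: pulling $\alpha_i$ (joining $K_0$ to $K_1$) back winds its middle part halfway around $K_0$, and after a further number of pullbacks confined to the turning region $\Ups\cup\Sec_{\qq-1}\cup\Zec_{\qq-1}\cup\cdots$ — bounded in terms of $\bb$ by Lemma~\ref{bounded deg}, since that region has combinatorial complexity $O(\bb)$ — one returns to a configuration with endpoints at combinatorial distance $\ge 2$ straddling $K_1$, where the winding argument of the previous paragraph applies. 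Assumption E3, in particular $\bb<\qq$, is what keeps this last excursion from pushing the total count up to $p$.

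The step I expect to be the main obstacle is the winding (monodromy) computation: tracking which little Julia sets the endpoints of $\alpha_i$ sit on is routine, but one must also control the homotopy class of $\alpha_i$ relative to the rays $\Ray_n,-\Ray_n$ tightly enough to certify that the closing-up loop links $v$ oddly. This is where the precise sector combinatorics E1--E2 and the bounded-degree Lemma~\ref{bounded deg} must be used in earnest, rather than merely as index bookkeeping; by contrast the combinatorics in the short-arc case is a secondary difficulty.
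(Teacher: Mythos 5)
Your argument hinges on the claim that once an endpoint of the iterated pullback reaches $K_0$ — say $\alpha_i$ joins $K_0$ to $K_m$ with $m\ge 2$ — the very next pullback is necessarily \emph{twisted}, i.e.\ the lift issuing from $K_{p-1}$ must land on $K_{m-1}'$. This is the whole content of the lemma at that stage, and it is not justified by the sector dynamics E1--E2 or by Lemma~\ref{bounded deg}: whether the two lifts are twisted or parallel is determined by the homotopy class of $\alpha_i$ rel the critical value, not by its endpoints and the sectors it visits, and after $i$ pullbacks that homotopy class is not controlled by the bookkeeping you set up. In fact the claim is false for general horizontal arcs: take any arc $\beta$ joining $K_{p-1}$ to $K_{m-1}$ and push it forward; the resulting arc from $K_0$ to $K_m$ has $\beta$ itself as a legitimate lift, so it does not pull off at the first passage through $K_0$. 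Nothing in your outline brings canonicity (positive weight) into the parity computation, so there is no mechanism forcing the closing-up loop to link the critical value oddly; the ``main obstacle'' you flag is precisely the missing proof, and as stated the assertion ``pull-off happens at step $i+1$'' is stronger than the lemma and not true. The treatment of the case $m=1$ (and of short arcs) inherits the same unproven winding claim and is, in addition, only sketched.

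For comparison, the paper does not attempt to locate the pull-off moment at the first passage through $K_0$. It first shows that a legitimate pullback of a canonical arc is again canonical (widths do not decrease under the covering followed by the immersion $i_*$), so a never-pulled-off arc would generate a string of arcs in $\AAA$; since $\AAA$ has at most $3p-1$ arcs, some arc would be periodic under $f^\#$, which Pilgrim's Lemma forbids — hence pull-off within $3p$ steps. Then, assuming the pull-off moment $m\ge p$, it uses the symmetric vertical arcs $\gamma^\pm$ of Lemma~\ref{sym vert arcs} (disjoint from $\AAA^p$): at the pull-off moment the lifts land on $K_i\,\&\,K_j'$ and $K_j\,\&\,K_i'$, forcing $K_i$ and $K_j$ to lie on opposite sides of $\Gamma=\gamma^+\cup K_0\cup\gamma^-$, while $\alpha_{m-p+1}$, which lands on $K_i$ and $K_j$, cannot cross $\Gamma$ — a contradiction. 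The parity information you try to extract by a monodromy count at the first passage is exactly what the paper obtains, at the actual pull-off moment, from the invariant curve $\Gamma$; without an analogue of that input your proposal has a genuine gap.
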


\begin{proof}
  If $\alpha\in \AAA$ is a horizontal arc and $f^* \alpha$ is a legitimate pullback (i.e., with both ends landing on $\KK$)
if exists, then $f^\#\alpha:= i_* (f^*\alpha)\in \AAA$, where 
$$
    i: U'\sm f^{-1}(\KK) \ra U\sm \KK
$$
is the natural immersion. 
Indeed,  letting $\beta=f^* \alpha$, we have $ \WW(\beta) = \WW(\alpha) $  
since the covering 
$$
   f:  U' \sm f^{-1}(\KK) \ra U \sm \KK 
$$
induces an isomorphism between  the corresponding rectangles on the universal covering spaces. 
Furthermore, $\WW(i_* \beta)\geq  \WW(\beta)$ since the immersion $i$ induces an immersion between
the corresponding rectangles (on the universal covering space)
 that homeomorphically maps the horizontal sides of one to the
horizontal sides of the other. Thus, if $\WW(\alpha) > 2$ then $\WW(f^\# \alpha) >2$,
which characterizes the canonical arcs. 

Of course, the same applies to $\AAA^p$ instead of $\AAA$.  

% Let $f^\#(\alpha) = i_*\circ f^*(\alpha) $ whenever $f^*\alpha$ is a legitimate pullback. 
Assume first we have a string of $3p$ legitimate pullbacks  $\alpha_m= (f^\#)^m \alpha$. 
Since $\AAA$ contains at most $3p-1$ arcs, two of these arcs are equal: $\alpha_m= \alpha_{m+n}$, $n>0$.
Then $\alpha_m =  (f^\#)^n \alpha_m$, which is impossible by Pilgrim's Lemma.   

So $\alpha$ must get pulled off in less than $3p$ steps. Assume now it pulls off in more than $p-1$ steps. 
Let $\alpha_m$, $m\geq p$,   be the last legitimate arc, and  let it land on $K_{i+1}$ and $K_{j+1}$. 
Then one of the arcs $f^* \alpha_m$ lands on $K_i$ \& $K_j'$ (and the other lands on  $K_j$ \& $K_i'$). 
Since $\alpha_m$ does not cross $f(\gamma^\pm)$, $f^*\alpha_m$ does not cross $\gamma^\pm$. 
Hence $K_i$ and $K_j'$ lie on the same side of $\Gamma:= \gamma^+\cup K_0\cup \gamma^-$,
and thus $K_i$ and $K_j$ lie on the opposite sides of it.  

But the arc $\alpha_{m-p}$ shares  the endpoints  with  $\alpha_m$, 
hence $\alpha_{m-p+1}$ (being legitimate) lands on $K_i$ and $K_j$. 
Hence it must cross $\Gamma$ -- contradiction.    
\end{proof}

\subsection{Newborn vertical weight}

\begin{lem}\label{def vert}
For any $p$, if $\WW_\loc$ is sufficiently big then 
the amount of newborn vertical weight in $U^{p+1}\sm \KK$ is at least 4\% of $\WW(K_1)$. 
\end{lem}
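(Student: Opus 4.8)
The plan is to harvest the newborn vertical weight from the two short canonical arcs emanating from $K_1$, using Lemmas~\ref{sym vert arcs} and~\ref{in p steps} to drive their pulled-off lifts out to the outer boundary of $U^{p+1}$.

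We may assume we are in the non-amplification alternative of Corollary~\ref{loc weights comparison} (otherwise $\WW^\ver$ is already $\gtrsim \la\,\WW(K_1)$ for the relevant constant $\la$, and it remains only to check that a definite fraction of it is newborn rather than inherited). Granting that, Corollary~\ref{loc weights comparison}(ii), applied with $\kappa = 1/2$ to the index $n=1$ and a small $\eps$ to be fixed at the end, gives --- once $p$ and $\WW_\loc^\KK$ are large --- that at least $(1-\eps)\WW(K_1)$ of the local weight $\WW_1^\KK$ is carried by the two short canonical arcs $\si^-$ and $\si^+$ of $\AAA^p$, joining $K_1$ to $K_0$ and to $K_2$ respectively. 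Both are horizontal arcs in $U^p\sm\KK$.

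Now pull back one step under $F=(f,i)\colon U^{p+1}\to U^p$. Since $f$ carries $K_0\subset U^{p+1}$ two-to-one onto $K_1\subset U^p$ and is univalent on the remaining little Julia sets, each horizontal arc $\al$ of $\AAA^p$ landing on $K_1$ and on some $K_j$ has exactly two lifts to $U^{p+1}\sm f^{-1}(\KK)$: one joining $K_0$ to $K_{j-1}$, the other joining $K_0$ to the primed copy $K'_{j-1}=-K_{j-1}$. As in the proof of Lemma~\ref{in p steps}, $f$ induces a conformal isomorphism of the corresponding universal-cover rectangles, so each lift has weight $\WW(\al)$. The key point is that the second lift pulls off all the way to $\di U^{p+1}$: since $\si^\pm$ do not cross $\AAA^p$, hence do not cross $f(\gamma^\pm)$, their off-lifts do not cross $\gamma^\pm$ and are confined to the $H_+$-side of the separating curve $\Ga=\gamma^+\cup K_0\cup\gamma^-$ supplied by Lemma~\ref{sym vert arcs}; for the elephant-eye combinatorics that side carries no marked $K_n$ reachable from $K_0$ beyond the primed decorations, so the off-lift must exit through $\di U^{p+1}$ and thus contributes weight $\ge\WW(\al)$ of newborn vertical weight at level $p+1$. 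Summing over the disjoint family of arcs realizing the $(1-\eps)\WW(K_1)$ above, and absorbing into an error term the $O(p)$ defects of the local-weight/arc-diagram comparison and of the transformation rules (admissible because $\WW_\loc^\KK\gg p$), the newborn vertical weight in $U^{p+1}\sm\KK$ is at least $(1-\eps)\WW(K_1)-O(p)$, which exceeds $4\%$ of $\WW(K_1)$ once $\eps$ is small and $\WW_\loc^\KK$ large.

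The principal obstacle is the confinement step: establishing that the off-lift of a short arc at $K_1$ genuinely reaches $\di U^{p+1}$ --- that the primed set $K'_{j-1}$ is exposed toward the outer boundary rather than shielded by the real $K_{j-1}$, in which case the lift would remain a legitimate horizontal arc and deposit nothing --- and that the flux so deposited is counted as newborn vertical weight and cannot leak back into some $\WW(K_n)$. The non-crossing properties of $\gamma^\pm$ and the pull-off mechanism of Lemma~\ref{in p steps} are exactly what this requires; turning them into a clean lower bound for the deposited flux, uniform in $p$, is where the real work lies.
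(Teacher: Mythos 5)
There is a genuine gap, and it lies exactly where you place your ``principal obstacle'': the one-step, arc-by-arc mechanism at $K_1$ cannot produce any forced loss. An arc of the canonical diagram landing on $K_1$ and on some $K_j$ has \emph{both} lifts beginning on $K_0$ (since $f^{-1}(K_1)=K_0$), and exactly one of the two lifts ends on the genuine $K_{j-1}$; that lift is legitimate and carries the full weight of the arc forward as horizontal weight, so the short arcs at $K_1$ are never ``pulled off'' at this step and deposit nothing by themselves. As for the other lift, it ends on the primed set $K_{j-1}'=-K_{j-1}$, which is a compact set in the \emph{interior} of $U^{p+1}$, not a piece of $\di U^{p+1}$; in the surface $U^{p+1}\sm \KK$ the primed sets are not marked, so weight landing on them is not vertical weight --- it is precisely the weight that is available to recombine into new horizontal arcs. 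So the confinement step you hope for is not a missing technicality: it is false as a pointwise statement, and the sanity check confirms it --- your conclusion would give newborn vertical weight $\geq (1-\eps)\WW(K_1)-O(p)$, i.e.\ essentially all of $\WW_1$ turning vertical in a single restriction, which is far stronger than the lemma and not true in general (the legitimate lifts alone keep a large horizontal weight at $K_0$ on level $p+1$). A secondary issue: you invoke Corollary \ref{loc weights comparison}, which lives in the unbounded (elephant-eye) analysis and carries the amplification alternative, whereas Lemma \ref{def vert} sits in the bounded-combinatorics pull-off argument and is proved for any $p$ without that input.

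The paper's proof is a global bookkeeping argument, not a local one. It considers \emph{all} horizontal arcs landing on $K_0$ (total weight $\WW_0$, essentially localized on the canonical diagram once $\WW_\loc$ is big), uses Lemma \ref{in p steps} to see that each such arc pulls off in fewer than $p$ steps, and uses the at-most-two-to-one correspondence $\alpha\mapsto\beta(\alpha)$ (last legitimate pullback) to conclude that the weight $\nu$ of arcs that pull off at the single step $U^p\to U^{p+1}$ --- meaning \emph{both} lifts are illegitimate --- satisfies $\nu\geq \tfrac12\WW_0-O(1)$. The loss is then extracted from a budget estimate: the new horizontal weight creatable on level $p+1$ is at most $\tfrac14(2\nu+\WW_1)$, so the horizontal weight drops by at least $\tfrac12\nu-\tfrac14\WW_1\geq \tfrac14(\WW_0-\WW_1)-O(1)$, and it is this drop (not the fate of any individual lift) that is newborn vertical weight; the case analysis on whether $\WW_0-\WW_1>\WW_1/5$, using the comparison $\WW_0\geq\WW_1$ of Lemma \ref{comparison of dyn loc w}, yields the stated $4\%$ of $\WW_1$. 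If you want to salvage your approach, you would have to replace the ``off-lift reaches $\di U^{p+1}$'' claim with an accounting of how much pulled-off weight can recombine into horizontal arcs --- which is exactly the $\tfrac14(2\nu+\WW_1)$ step of the paper.
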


\begin{proof}
  Let $\WW_i=\WW(K_i)$ be the local weight around $K_i$ in $U^p\sm \KK$.
Each of these is  essentially localized on the CAD (up to an additive constant depending on $p$),
as long as $\WW_\loc$ is sufficiently big. 

Let $\nu$ be the total weight in $U^p\sm \KK$ of the horizontal arcs that are pulled off under $f^*$
(i.e., both their lifts are illegitimate).
By Lemma \ref{in p steps},  any horizontal arc $\alpha$ in $U^p$  landing on $K_0$ is pulled off in less than $p$ steps.
Let $\beta(\alpha) = (f^\#)^m \alpha$ be the last legitimate pullback of $\alpha$, $m<p-1$.
%considered as an arc in  $U^{p+m})\sm f^{-m}(\KK)$, 
%and let $\beta(\alpha)= i_* (f^m)^*\alpha$, where $i:  U^{p+m} \sm f^{-m} \KK \ra U^p\sm \KK$ is the natural immersion. 
%Then $\WW(\beta) \geq \WW(\alpha)$.  
%Indeed,  $ \WW(\alpha_m) = \WW(\alpha) $  
%since the covering 
% $$
%   f^m:  U^{p+m}\sm f^{-m}(\KK) \ra U^p\sm \KK 
%$$
% induces an isomorphism between  the corresponding rectangles on the universal covering spaces. 
% Furthermore, $\WW(\alpha_m)\leq \WW(\beta)$ since the immersion $i$ induces an immersion between
% the corresponding rectangles that homeomorphically maps the horizontal sides of one to the
% horizontal sides of the other.  

The correspondence  $\alpha\mapsto \beta$ is at most  two-to-one.
Indeed, if $\beta(\alpha)= \beta(\alpha')$ then $\alpha$ lands on $K_0$ and $K_l$ with some $l>0$,
while $\alpha'$ lands on $K_0$ and $K_{-l}$, and $(f^\#)^l \alpha =\alpha'$ (up to re-labeling the arcs).  
Hence we have:
$$
    \nu \geq \sum \beta(\alpha) \geq  \frac 12 \sum \alpha = \frac 12 \WW_0   -  O(1).    
$$
The total pulled-off weight available for composing new horizontal arcs on $U^{p+1}$   is $2 \nu + \WW_1$,
hence the new horizontal weight is bounded by  
$ (2\nu + W_1) /  4 $. 
%Since we have lost  $\nu$  of the old horizontal weightgot lost,
So, by restricting $U^p$ to $U^{p+1}$, we have lost at least
\begin{equation}\label{hor loss}
  \nu -  \frac 14 (2\nu + \WW_1)  = \frac 12 \nu - \frac 14 \WW_1 \geq \frac 14 (\WW_0 - \WW_1) - O(1)
\end{equation}
of the horizontal weight. This creates as much of the vertical weight in $U^{p+1}\sm \KK$.  
If $\WW_0-\WW_1> \WW_1/5$, the last estimate implies the desired.

 Otherwise, we can use a rougher bound for the newborn  horizontal weight:
$$
     (\WW_0-\WW_1) + \frac 12 \nu \leq \frac 15 \WW_1 + \frac 12 \nu,
$$  
so the horizontal loss is at least
$$
  \nu-  (\frac 15 \WW_1 + \frac 12 \nu) = \frac 12 \nu - \frac 15 \WW_1 \geq \frac 14 \WW_1 -  \frac 15 \WW_1, 
$$
and we are done. 
\end{proof}

\begin{cor}\label{lots of vert}
For any $p$, if $\WW_\loc$ is sufficiently big then 
there is a definite  vertical weight in $U^{2p}\sm \KK$ (independently of $p$).
\end{cor}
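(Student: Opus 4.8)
The plan is to read the Corollary directly off Lemma~\ref{def vert}, using Lemma~\ref{comparison of dyn loc w} to replace the quantity $\WW(K_1)$ appearing there by $\WW_\loc^\KK$, and then using monotonicity of the vertical weight under restriction to transport the gain produced at the step $U^{p}\supset U^{p+1}$ down to $U^{2p}$.

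First I would work on $U^p\sm\KK$ and invoke Lemma~\ref{comparison of dyn loc w} there: it gives $\WW(K_1)\ge \WW_\loc^\KK/2$, all local weights being computed on $U^p$. Since the covering annulus of a fixed little Julia set $K_k$ can only shrink when the ambient surface is cut down, each $\WW_k^\KK$ is non-decreasing along the tower $U=U^1\supset U^2\supset\cdots$; hence the hypothesis ``$\WW_\loc$ sufficiently big'' (say on $U$) forces $\WW_\loc^\KK$ on $U^p$ to be at least as big, so $\WW(K_1)$ on $U^p$ is as large as we please. Lemma~\ref{def vert} now yields at least $0.04\,\WW(K_1)$, that is at least $0.02\,\WW_\loc^\KK$, of newborn vertical weight in $U^{p+1}\sm\KK$.

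It remains to carry this gain across the remaining restrictions $U^{p+1}\supset\cdots\supset U^{2p}$. The newborn weight is by definition part of the vertical weight of $U^{p+1}\sm\KK$; and restriction only converts horizontal weight (arcs joining two little Julia sets) into vertical weight, never the reverse --- in particular the outer covering annulus of $\di U^{n}$ only shrinks, so its width $\WW^\ver=\WW(\di U^{n})$ only grows, when $U^{n}$ is replaced by $U^{n+1}$. Hence the vertical weight in $U^{2p}\sm\KK$ is at least that in $U^{p+1}\sm\KK$, i.e. at least $0.02\,\WW_\loc^\KK$: a bound independent of $p$ that can be made as large as we wish by taking $\WW_\loc$ large. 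The delicate point --- and the one I expect to require the most care --- is to treat ``vertical weight'' as an honest additive quantity and to verify the monotonicity just used, namely that the vertical weight created at the $U^{p}\supset U^{p+1}$ step is not reabsorbed into horizontal arcs over the intervening steps; this robustness is precisely why the Corollary is safely stated for $U^{2p}$ rather than for $U^{p+1}$.
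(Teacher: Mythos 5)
Your two monotonicity inputs are correct: in this paper's conventions the weight of a boundary component is the reciprocal of the modulus of the corresponding covering annulus, so cutting $U^{n}$ down to $U^{n+1}$ shrinks both the annuli around the $K_i$ and the outer annulus, hence the local weights $\WW_k^\KK$ and the vertical weight $\WW^\ver=\WW(\di U^{n})$ can only grow along the tower (this is a Schwarz--lemma statement about inclusions; note it is not an instance of Lemma~\ref{generaltransform rules}(i), which concerns proper maps and points the other way). Granting that, a single application of Lemma~\ref{def vert}, together with $\WW_1\geq\WW_0/2$ from Lemma~\ref{comparison of dyn loc w}, does give vertical weight of order $\WW_\loc$ already in $U^{p+1}\sm\KK$, and it persists down to $U^{2p}$.

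The gap is that this is not what the corollary is after, and it is not how the paper argues. The paper first reduces to the case where the \emph{total} horizontal weight in $U\sm\KK$ is of order $p\,\WW_\loc\sim\WW(\AAA)$ (``otherwise there is nothing to prove''--- a reduction that only makes sense if the target is measured against the total weight, not against $\WW_\loc$), and then reruns the proof of Lemma~\ref{def vert} at \emph{every} one of the $p$ restrictions $U^{p+n}\supset U^{p+n+1}$, $0\le n<p$: each step converts at least a $c/p$-fraction of the total horizontal weight into vertical weight (as long as the horizontal weight stays definite), so after $p$ steps a definite fraction, independent of $p$, of the whole weight $\asymp p\,\WW_\loc$ has become vertical in $U^{2p}\sm\KK$. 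Your argument caps out at $\asymp\WW_\loc$, i.e. it falls short of this by a factor of $p$: monotonicity of $\WW^\ver$ preserves one step's gain but does not accumulate the losses produced at the subsequent steps. The tell-tale sign is the depth $2p$ in the statement: under your reading the conclusion would already hold at $U^{p+1}$ and the remaining $p-1$ restrictions would be pointless, whereas in the paper's proof they are exactly where the amplification by the factor $p$ takes place (the $2p$ is not a ``safety margin'' for preserving one step's gain, as your last sentence suggests). So unless one insists on reading ``definite vertical weight (independently of $p$)'' in the weak sense ``$\geq c\,\WW_\loc$ with an absolute $c$'', your proposal proves a strictly weaker statement than the one the paper proves; to match the paper you must iterate the pull-off estimate through all $p$ levels rather than invoke Lemma~\ref{def vert} once.
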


\begin{proof}
  We can assume that the total horizontal weight in $U\sm \KK$ is of order $p\WW_\loc\sim \WW(\AAA)$
  (for otherwise there is nothing to prove). By the previous lemma
  (or rather, its proof) \note{need this?}  
by restricting  $U^{p+n}$ to $U^{p+n+1}$ we loose at least $c/p$-th part of the horizontal weight
(as long as it stays definite),
with an absolute $c$ (``4\%''). Hence by  $p$ restrictions we loose
at least $c/2$-th part \note{OK?}
of the total horizontal weight, thus creating as much of the vertical weight. 
\end{proof}

\comm{****
\subsection{Localization}

 We will now show that a definite (in scale of  $\WW_\loc$) newborn vertical weight is localized
at bounded number of little Julia sets.

Let us say that an arc $\alpha\in \AAA$ is {\it light } if $\WW(\alpha)< \xi\, \WW_1  (= 0.1\% \, \WW_1$); otherwise it is {\it heavy}. 
Since $\AAA$ is supported on a planar graph, there exist some vertex $K_s$ of valence $\leq 6$.
Light arcs at this vertex can weigh at most $5\xi \WW_s\hor+10 $.  If we pull the heavy  arcs around,
they get pulled off in at most $p$ iterates. Similarly to  (\ref{hor loss}),  
in this process we loose te least
$$
   \frac 14  ( (1-5\xi)\,  \WW_s  - \WW_1) - O(1)
$$
of  the horizontal weight. Moreover, this weight is localized at $\leq 6$ places. So, {\it  we win as long as $\WW_s$ is
definitely  bigger than $\WW_1$.}  
*****}

% \section{Unbounded combinatorics with confined canonical WAD}
\section{Pull-off Argument: unbounded combinatorics}

\subsection{Setting}\label{assumptions sec} 
%\subsection{Assumptions}\label{assumptions sec} 

Consider the class $\QQ\equiv \QQ(\ul \qq, \bar\bb)$
of renormalizable pseudo-ql maps $f: U' \ra U $ with the elephant eye
combinatorics with parameters $ \qq\equiv \qq_f \geq  \ul\qq  $ and $\bb\equiv \bb_f\leq \bar \bb $,
and  renormalization periods $p\equiv p_f= \qq+\bb$.
Let $\AAA=\AAA_f$ stand for  the Canonical WAD of $f$  in $U\sm \KK$. 

Take an integer  $ N \in (1,     \qq-\bb]  $.
Let $\Om\subset U$ be  a topological disk 
in the translation region 
% with $\di \Om \subset U\sm \KK$
which is the union of 
$2N+1$ consecutive  sectors $\Sec_n\supset K_n$,
$n \in \II = [l-N, l+N] \subset \Z/ p\Z$.
Note that $\Om$ contains the chain of $2N$ short arcs
connecting the $K_n$, $n\in \II$
(or rather: appropriate representatives of these arcs). 
In what follows, we will also use notation $\De_m$ for $\Sec_m$. 

\comm{******
\ssk\nin (T1) 
$$
    \Om= \bigcup_{m=- N }^N \De_m,\quad \di \De_m\subset U\sm \KK,
$$
where the $\De_m$ are Jordan disks with disjoint interiors such that $f$ univalently maps $\De_m$ onto $\De_{m+1}$;
each $\De_m$ contains a block $\BB_m\subset \II$ of  $d$ little Julia sets, i.e.,
$$    
                        \De_m\cap \KK = \bigcup_{n\in\BB_m } K_n; 
$$

%\ssk \nin (T1) There is a ``central block'' $\BB \subset \II$ of $d$ little Julia sets such that
%$$
%    \II = \bigsqcup_{m=-N}^N \BB_m, \quad \{\mathrm{where}}\ \BB_m=\BB+m;  
%$$

\ssk \nin (T2)  For any $n\in \II$, at least $\WW_n-O(1)$ part of the local weight $\WW_n$ is supported on $\AAA$
    (with an absolute additive constant);

\ssk\nin (T3) Any horizontal arc  $\alpha\in \AAA^\hor$ that begins in $\De_m$ with $|m| <  N$ 
  is confined to $\De_{m-1}\cup \De_m\cup \De_{m+1}$;
%  and it ends in $\BB+m \pm 1$; 

 % \ssk\nin (T4) Any horizontal arc $\alpha$ that lands in $\BB+m$ with $-N <    m \leq N$ has a legitimate pullback $f^*\alpha$
 %    (that, of course,  lands in $\BB + m-1$).  
 ***************}

\bignote{In what follows, Ref (T2) should be replaced with Corollary \ref{loc weights comparison} (ii) }

 Such a domain $\Om$ will be called a {\it translation domain} of size
 $2N+1$ centered at $K_l$. 
% with  base $\De=\De_0$.
Let us also   impose the following property on such a domain:

\ssk\nin (P1)
All local weights $\WW_n$, $n\in \II$,  are $(1-\eps)$-comparable.

\ssk\nin (P2)
$(1-\eps)$-proportion of the local weight $\WW_n$, $n\in \II$,
is carried by two short arcs emanated from $\KK_n$.

\ssk Here $\eps>0$ can be selected arbitrary small as long as
$\WW_\loc$ and $p$ are big enough.  

\ssk Note that (P1) is easily enforced because all the colca weights are
$2-$comparable (Lemma \ref{comparison of dyn loc w}). 
Property (P2) can be enforced to Corollary \ref{loc weights comparison}.

 We say that  $\Om\cap \KK$ is the {\it $\KK$-slice} of $\Om$.
% (and similarly for any other domain, for that matter).  
Let  $(f^n)^*\Om $ be the pullback of $\Om$ under $f^n$ centered at $K_{l-n}$.
(We will see later that  actually  the pullback  $(f^n)^*\Om $
% for $n\leq 3p$
is independent of the base.) 

We let
$$
  \Om[\kappa]= \bigcup_{|m| \leq \kappa N} \De_m. 
$$

The Julia sets $\KK_{\pm N} $ will be called {\it peripheral}. Let
$\pp\II= \KK_N\cup \KK_{-N}$ be the family  of two peripheral Julia sets
and $\II^\circ= \II\sm \pp\II$ be the complementary family of non-peripheral Julia sets.

We let  $\BOM$ stand for the pseudo-disk corresponding to $\Om$, and  $(\Bf^n)^*\BOM$ for the corresponding  pullbacks.

For $s\in \II^\circ$, let $\CC(K_s |\, \II) $   be the conductance of the restricted electric circuit $\AAA|\, \II$ with battery $\{K_s , {\pp}\II\} $. 
Thus,  the potential difference 1 between $K_s$ and ${\pp}\II$ 
creates current  $\CC(\BB |\, \II) $ in the restricted circuit. 
By the Series Law, we have: 

% \ssk\nin (T4)
If $s\in \KK_m$ with $|m|\leq  \kappa N$ for some
$\kappa\in (0,1)$ then
\begin{equation}\label{T4}
  \CC (K_s, \II  ) \leq \frac C{(1-\kappa )N} \, \WW_\loc,
\end{equation}
with an absolute $C$. 
% where $C$ depends only on $d$. 

\bignote{ Below Ref (T4) should be replaced with (\ref{T4}) ! }

It is convenient to play with many translation domains $\Om_l$
simultaneously:

\bsk\nin
{\bf Translation Property}.  \note{OK name?} 
For arbitrary big  $N, L\in \N$ 
% function  $L=L(N)\to \infty$  and an arbitrary big $N$ 
there exists  $\underline{p}=\underline{p}(N,L)$ such that any 
map $f\in \QQ$ with $p \geq \underline{p}$ has  $L$ 
pairwise disjoint translation domains $\Om_l $
centered at some $K_l$
of size $2N+1$.

\msk
Here the index $l$ runs over some subset $\LL\subset \Z/p\Z$ of size $L$. 
All objects associated with $\Om_l$ will be labeled by $l$ if needed, e.g., 
% $\BB_l$,
$\II_l$ etc. \note{more?}  

\comm{*******
\subsection{Example: Multi-elephant eye with confined  canonical
  WAD} \label{confinement assumption}

Let us consider the multi-elephant eye combinatorics with $r$ eyes. 
Let $\Gamma$ be the almost invariant marking of $U\sm \KK$ by Hubbard arcs.

Given a subfamily  $\JJ\subset \Z/p\Z$ of little Julia sets,  
let us say that an arc $\alpha$ is {\it confined to $\JJ$} if it does not cross 
marked Hubbard edges  attached to any $K_m$ with  $m\not\in \JJ$. 

\bignote{Formulate in terms of sectors}

Let $\JJ_n = \{ n+jq \}_{j=0}^{r-1}$ be the ``fundamental family' 'of Julia sets
containing $K_n$, and let 
$$
   \JJ_n(R) = \bigcup_{ |k |\leq R}  ( \JJ_n +k ) 
$$ 
stand for its ``$R$-neighborhood''. 

For the moment, let us label the little Julia sets from $-p/2$ to $p/2$. 

\msk\nin {\bf Confinement Assumption:}
Any canonical arc that begins on  $K_n$ with $|n| \geq q/N$ 
is confined  to the region  $ \JJ_n(p/N)$. 
% $\{m:\,   |m-n|< p/N\} \subset \Z/p\Z$.

\bignote{Make it stronger: confinement to any region of size $q/N$
  with two buffers of the same size.}

\msk
Under this assumption,
 the canonical WAD well inside the translation region descends to the Ecalle-Voronin cylinder ${\Cyl}$.
Let us refine it to a triangulation of  ${\Cyl}$ with vertices $K_n$ and $\pm i \infty$.
Then we can connect $+i\infty$ to $-i\infty$ with a simple curve $\gamma$ composed of edges  of the triangles. 
 Cutting $\Cyl$ along $\gamma$ we obtain a topological strip in  $\Cyl=\C/\Z$ 
that lifts to a fundamental domain % $\Delta$ 
on $\C$.  
Taking the union of its $2N+1$ 
% forward and backward  
translates, we obtain $\Om$. 

********** }

\subsection{Amplification Theorem}

\begin{thm}% [Amplification]
\label{amplification thm}
% Under the Translation Assumption,
For any amplification factor $\la > 1$ 
there exist $\underline{\WW}_\loc = \underline{\WW}_\loc(N) $ and  $\underline{p}=\underline{p}(, \la, N,L) $
% (``very  big'')
such that for all sufficiently big $N$ and $L$ % and sufficiently big $W_\loc$ (in terms of $N$),
we have:
$$
     \WW^\ver \geq \la \, \WW_\loc,
$$ 
as long as $\WW_\loc\geq \underline{\WW}_\loc$ and $p\geq \underline{p}$. 
% where $C(N)\to \infty $ as $N\to \infty$.
\end{thm}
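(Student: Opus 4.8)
The plan is to argue by contradiction: assume $\WW^\ver < \la\,\WW_\loc$ (failure of $\la$-amplification) and manufacture from it so much vertical weight that this inequality is violated. The first step is to record the rigidity of the picture in that regime. Since $\la$-amplification fails, Theorem~\ref{short arcs} puts us in its second alternative: all but an $o(1)$-proportion of $\WW(\SAS)$ is carried by the short arcs of the translation region. Corollary~\ref{loc weights comparison} then gives $\WW^\Hub_\loc \asymp \WW^\KK_\loc = \WW_\loc$ and, for each $n\in[1,\kappa\qq]$, at least a $(1-\eps)$-proportion of the local weight $\WW_n$ sitting on the two short arcs emanating from $K_n$, with $\eps$ as small as we wish once $\WW_\loc$ and $p$ are big. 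So the canonical diagram in the translation region is, up to a negligible part, a long chain of short arcs of comparable weights $\asymp\WW_\loc$.

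Next I would feed this chain through the pull-off machinery of the previous sections, localized in the translation domains. By the Translation Property, for $p\geq\underline p(N,L)$ the map $f$ has $L$ pairwise disjoint translation domains $\Om_l$, $l\in\LL$, each of size $2N+1$ and satisfying (P1), (P2) and the conductance bound (\ref{T4}). Fix one $\Om_l$; its $\KK$-slice is a chain of $2N+1$ little Julia sets joined by $2N$ short arcs of total weight $\asymp N\,\WW_\loc$. Iterate the pullback $(\Bf^p)^*$ centered at $K_l$: by Lemma~\ref{J-sets don't pull off} the $\KK$-slice loses only $O(\bb)$ sets per period, so the process runs for $\asymp N/\bb$ periods, and — since $\Om_l$ avoids the critical Julia set $K_0$, and so (for $p\gg N$) do all of these pullbacks — each pullback is univalent, hence transports widths and fluxes with no loss and, in particular, carries any newborn vertical weight forward into $\WW^\ver$ without the exponential degree decay. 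At each period the short arcs attached to the $\KK$-sets that leave the slice (these are exactly the arcs near the leading edge excluded from Lemma~\ref{interesting moments}) must pull off in the sense of Lemma~\ref{in p steps}; by the accounting of Lemma~\ref{def vert}, together with Lemma~\ref{self-intersecting segments} and Corollary~\ref{long and peripheral} bounding the weight that escapes as self-intersecting, peripheral or long, a definite fraction of the released short-arc weight is forced into newborn vertical weight. The conductance estimate (\ref{T4}) closes the books: only $O(N^{-1}\WW_\loc)$ of current can reach the peripheral sets $\pp\II_l$ from the centre of $\Om_l$, so the bulk cannot leak out sideways and must be spent as vertical flux. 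Summing over the $\asymp N/\bb$ periods, $\Om_l$ contributes at least $c\,N\,\WW_\loc - O(N)$ to $\WW^\ver$, with an absolute $c>0$.

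Since the $\Om_l$ and all their pullbacks are pairwise disjoint, their contributions occur on disjoint parts of the diagram and add, so
$$
  \WW^\ver \;\geq\; \sum_{l\in\LL}\bigl(c\,N\,\WW_\loc - O(N)\bigr) \;=\; L\,N\,\bigl(c\,\WW_\loc - O(1)\bigr) \;\geq\; \frac12\, c\, L\, N\, \WW_\loc
$$
once $\WW_\loc\geq\underline{\WW}_\loc(N)$ (only $O(LN)$ arcs are involved, so the error is absorbed). Choosing $N$ and $L$ at the outset large enough that $\frac12 c\,L\,N\geq\la$ yields $\WW^\ver\geq\la\,\WW_\loc$, contradicting the hypothesis; hence $\la$-amplification holds, with $\underline{\WW}_\loc=\underline{\WW}_\loc(N)$ and $\underline p=\underline p(N,L)$ as above.

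The step I expect to be the real obstacle is the independence of the contributions: proving that the vertical weight created in $\Om_l$ is genuinely new and genuinely vertical — it does not re-enter another $\Om_{l'}$ and persist there as ordinary horizontal weight, nor get recycled back into the short-arc chain — and that the pullbacks $(\Bf^n)^*\BOM_l$ really remain pairwise disjoint and disjoint from $K_0$ over the full range of $n$ in play, so that univalence (hence the absence of degree decay) holds throughout. This is precisely where the disjointness built into the Translation Property must be married to the quantitative trapping of (\ref{T4}) and to the segment-diagram bounds of Corollary~\ref{long and peripheral}; everything else is an iteration of the pull-off bookkeeping already set up.
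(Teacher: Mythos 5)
There is a genuine gap, and it sits exactly where you flagged your worry: the claim that the pullbacks $(\Bf^n)^*\BOM_l$ avoid $K_0$ and are therefore univalent, so that widths and newborn vertical weight are transported ``with no loss and without the exponential degree decay.'' This is false. To run the pull-off mechanism you must iterate over $n$ up to $\asymp 3p$, and during each renormalization period every one of the $2N+1$ little Julia sets in the $\KK$-slice passes through the critical sector; consequently $\deg\bigl(\Bf^n : (\Bf^n)^*\BOM_l \to \BOM_l\bigr)$ grows exponentially in $N$ (in the paper's own accounting, local degree at most $8$ and big degree at most $8^N$). So the vertical weight you manufacture deep in the pullback tower is only vertical relative to $(\Bf^n)^*\BOM_l$, and a naive push-forward loses an exponential factor. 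The paper's proof exists precisely to circumvent this: Lemma \ref{def vert does it} applies the Covering Lemma (Quasi-Additivity Law of \cite{covering lemma}) to the map $\Bf^n : (\Bf^n)^*\BOM_l \sm K_{s-n} \to \BOM_l \sm K_s$, using the local buffer of width $\asymp \WW_\loc$, to conclude $\WW(\BOM_l\sm K_s)\asymp\WW_\loc$; then the conductance bound (\ref{T4}) kills the paths crossing the peripheral sets, forcing a \emph{single} non-peripheral $K_m$ to carry $\WW^\ver(K_m)\geq \frac{c}{N}\WW_\loc$. Your bookkeeping, resting on lossless transport, credits each domain with $\asymp N\,\WW_\loc$ of genuine vertical weight, which is off by a factor $\asymp N^2$ from what the argument can actually deliver; the paper instead takes $L\asymp \la N$ disjoint translation domains and sums the per-domain gains $\frac{c}{N}\WW_\loc$ to reach $\la\,\WW_\loc$.

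A second, smaller gap: the assertion that ``a definite fraction of the released short-arc weight is forced into newborn vertical weight'' at each period is not justified by citing Lemma \ref{def vert}, Lemma \ref{self-intersecting segments} and Corollary \ref{long and peripheral}. Inside the translation-domain pullbacks one needs the localized bookkeeping the paper carries out: Lemma \ref{sym vert arcs} (the symmetric vertical arcs through $K_0$), Lemma \ref{in p steps} (pull-off in fewer than $p$ steps), Lemma \ref{interesting moments} to pin the pull-off moments to $\II_l$, the at-most-two-to-one correspondence between an arc and its last legitimate pullback giving the lower bound $\sum_n \nu_n \gtrsim N\,\WW_0$, the penalty estimate $\sum_n\eta_n\leq (2N+1)\WW_1$, and finally the two-case analysis according to whether $\WW_0-\WW_1$ is definite or not. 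Without that fitting/counting step your ``definite fraction per period'' is unsupported, and with it the conclusion you obtain is the definite vertical weight \emph{inside a pullback}, which still has to be recycled through Lemma \ref{def vert does it} as above. So the skeleton (contradiction, Theorem \ref{short arcs} and Corollary \ref{loc weights comparison} to reduce to short-arc chains, disjoint translation domains, summation over $\LL$) matches the paper, but the two load-bearing steps --- the Covering Lemma transfer and the quantitative pull-off count --- are missing or replaced by an incorrect univalence claim.
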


\subsection{Definite local loss}

Let us first show that if the local vertical weight is definite in  some pullback $(\Bf^n)^* \BOM$ with $n\leq 3p$,
then the Amplification Theorem holds:

\begin{lem}\label{def vert does it}
% Under the Translation Assumption, 
Assume there exists a  $\de > 0$ 
such that each domain  $\BOM_l$ has a  pullback $(\Bf^n)^*\BOM_l$ with $n\leq 3p$
that  satisfies
$$
      \WW ((\Bf^n)^*\BOM_l\sm K_{s-n}) \geq \de \, \WW_\loc, 
$$
where $K_{s-n}\in (\Bf^n)^* \BOM_l$ and the vertex $K_s$, $s\in\II_l$, 
stays distance at most $\kappa N$ from $K_l$. 
Then there exists  $L=L(N)$ such that the conclusion of the Amplification Theorem holds.
% that for all $N$ big enough we have: 
% $$
%     \WW(\AAA^\ver) \geq \la \, \WW_\loc,
% $$ 
% as long as $\WW_\loc\geq \underline{\WW}_\loc(N)$ and $p\geq \underline{p}(N,L)$. (Here quantification is the same as in  teh Amplification Theorem.}
\end{lem}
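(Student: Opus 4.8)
The plan is to feed the definite local weight of each pullback $(\Bf^n)^*\BOM_l$ back into $\BOM_l\subset U$ by the forward dynamics, to argue that inside $\Om_l$ this weight has nowhere to go except out through the part of $\di U$ facing $\Om_l$, and then to sum over the $L$ pairwise disjoint translation domains. Since the $\Om_l$ occupy disjoint blocks of sectors, the pieces of $\di U$ they face are disjoint, so $L$ escaping contributions of definite size $\asymp\de\,\WW_\loc$ will add up and force $\WW^\ver\gg\la\,\WW_\loc$ once $L$ is taken large in terms of $\la$ and $\de$.

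Concretely, I would first fix $\la$, let $C$ be the absolute constant of (\ref{T4}) and $\kappa\in(0,1)$ the constant of property (P2), pick $N$ so large that $\frac{C}{(1-\kappa)N}$ is much smaller than $\de$, and --- using that the $\eps$ governing (P1)--(P2) may be taken as small as we please once $\WW_\loc$ and $p$ are big --- pick $\eps$ so small that $N\eps$ is much smaller than $\de$; then set $L=L(N):=\lceil 4\la/(c\,\de)\rceil$, where $c>0$ is the absolute constant produced in the next step. The Translation Property then yields $\underline p=\underline p(N,L)$ so that, for every $f\in\QQ$ with $p\ge\underline p$ and $\WW_\loc\ge\underline{\WW}_\loc(N)$, there are $L$ pairwise disjoint translation domains $\Om_l$, $l\in\LL$, of size $2N+1$ for which (P1), (P2) and (\ref{T4}) hold.

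Next, fix $l\in\LL$ and take the pullback $(\Bf^n)^*\BOM_l$ with $n\le 3p$ supplied by the hypothesis, which carries weight $\ge\de\,\WW_\loc$ at $K_{s-n}$, where $s\in\II_l$ is within combinatorial distance $\kappa N$ of $l$. Because $n\le 3p$, Lemma \ref{bounded deg} makes $\Bf^n\colon(\Bf^n)^*\BOM_l\to\BOM_l$ a covering--immersion of bounded degree $d_0=O(1)$, so by the Transformation Rules (Lemma \ref{generaltransform rules}) and the Parallel Law (Lemma \ref{transform rule for concatenation}) its push-forward produces weight $\ge c\,\de\,\WW_\loc$ in $\BOM_l$ (with $c\asymp 1/d_0$) carried by arc-segments that pass around $K_s$ without landing on it --- the ``excess'' at $K_s$. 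Inside $\Om_l$ every such arc-segment either terminates on some other little Julia set $K_m$, $m\in\II_l$, or travels along the chain toward the peripheral pair $\pp\II_l$ and past it, or escapes $\Om_l$ through the part $\Theta_l:=\di U\cap\di\Om_l$ of its boundary lying on $\di U$. By (P2) applied at each $K_m$, the weight of arc-segments landing on $K_m$ off its two short arcs is $O(\eps\WW_\loc)$, hence $O(N\eps\WW_\loc)$ in total for $\Om_l$; by (\ref{T4}) and the Series Law the weight reaching the periphery (hence leaving $\Om_l$ sideways) is at most $\CC(K_s,\II_l)\le\frac{C}{(1-\kappa)N}\WW_\loc$. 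By our choices of $N$ and $\eps$ both are smaller than $\tfrac{c\de}{4}\WW_\loc$, so at least $\tfrac{c\de}{2}\WW_\loc$ of the excess lands on $\Theta_l$.

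Finally, since the $\Om_l$ are pairwise disjoint the arcs $\Theta_l\subset\di U$ are disjoint, so the weights landing on the various $\Theta_l$ are carried by disjoint families; by the easy direction of Theorem \ref{thin-thick for S} (namely $\WW(J_k)\ge\sum_{\alpha\to J_k}\WW(\alpha)$, applied to $J_k=\di U$), their sum is dominated by $\WW(\di U)=\WW^\ver$, so
$$
  \WW^\ver\ \ge\ \sum_{l\in\LL}\tfrac{c\de}{2}\,\WW_\loc\ =\ \tfrac{Lc\de}{2}\,\WW_\loc\ \ge\ \la\,\WW_\loc ,
$$
which is the conclusion of Theorem \ref{amplification thm}. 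I expect the push-forward step to be the main obstacle: one must verify that the weight at $K_{s-n}$ really begets \emph{new} weight around $K_s$ rather than a bookkeeping copy of the two short arcs already counted in (P2), and this relies on the pullback being wound around $K_{s-n}$ transversally to the Hubbard chain while $\Bf^n$ keeps bounded degree over $\BOM_l$ (Lemma \ref{bounded deg}). The competing ``sideways'' escape past the periphery is exactly what the conductance estimate (\ref{T4}) is designed to kill, and the disjointness accounting in the last step, though routine, has to be arranged so that arcs escaping distinct $\Om_l$ cannot interfere.
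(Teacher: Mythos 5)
There is a genuine gap at the crucial step, namely the transfer of the hypothesis $\WW((\Bf^n)^*\BOM_l\sm K_{s-n})\geq \de\,\WW_\loc$ from the pullback down to the domain $\BOM_l$ itself. You claim that $\Bf^n\colon (\Bf^n)^*\BOM_l\to\BOM_l$ is a covering--immersion of degree $d_0=O(1)$ ``by Lemma~\ref{bounded deg}'' and then push the weight forward by the Transformation Rules, losing only a factor $\asymp 1/d_0$. But Lemma~\ref{bounded deg} is a statement about edges of the Hubbard tree being mapped at most two-to-one under $f^p$; it says nothing about the degree of $\Bf^n$ on the pullback domain. That degree is governed by the number of passages of the pullback orbit through the critical point and is \emph{not} bounded by an absolute constant: only the local degree (near $K_{s-n}$) is bounded (by $8$), while the global degree can be as large as $8^N$. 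Consequently Lemma~\ref{generaltransform rules} only yields $\WW(\BOM_l\sm K_s)\gtrsim \de\,\WW_\loc/8^N$, which is useless. The missing idea is precisely the Covering Lemma (Quasi-Additivity Law) of \cite{covering lemma}: applied to $\Bf^n\colon (\Bf^n)^*\BOM_l\sm K_{s-n}\to \BOM_l\sm K_s$, with bounded \emph{local} degree and with the buffer given by the maximal immersed annulus around $K_s$ in $\BOM_l\sm\KK$ (of width $\WW_s\asymp\WW_\loc$, comparable to the assumed weight upstairs), it gives $\WW(\BOM_l\sm K_s)\asymp\WW_\loc$ despite the unbounded global degree. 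This is the analytic heart of the lemma and cannot be replaced by the Schwarz-type transformation rules.

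The second half of your argument also diverges from what the estimates actually support. Once $\WW(\BOM_l\sm K_s)\asymp\WW_\loc$ is known, the vertical paths of this annulus either cross a peripheral Julia set (this family has width $\leq \frac{C}{(1-\kappa)N}\WW_\loc$ by (\ref{T4})) or overflow vertical paths of $U\sm\KK$ emanating from some non-peripheral $K_m$, $m\in\II_l^\circ$; since there are only $O(N)$ such $K_m$, pigeonholing gives one with $\WW^\ver(K_m)\geq \frac{c}{N}\WW_\loc$ --- not a weight $\asymp\de\,\WW_\loc$ escaping concentrated through $\di U\cap\di\Om_l$ as you assert. Your bookkeeping via (P2), with an error $O(N\eps\,\WW_\loc)$, is also quantitatively shaky: $\eps=\eps(N)$ is not freely adjustable independently of $N$ (it decays only like a power of $1/N$), so ``$N\eps\ll\de$'' is not available, and indeed you yourself flag the unresolved possibility that the pushed-forward weight merely duplicates the short-arc weight already counted. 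For these reasons the number of domains must be taken $L\asymp \la N$ (as in $L=2\la N/c$), not $L=O(\la/\de)$ independent of $N$; one then concludes $\WW^\ver\geq\sum_{l\in\LL}(\WW^\ver(K_{m_l})-2)\geq\la\,\WW_\loc$ by summing the local vertical weights over the $L$ disjoint $\KK$-slices.
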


% Here the distance is the combinatorial distance in the canonical arc diagram $\AAA$
%%%  (where each arc is assigned length 1).   

\begin{proof}
We let $\BOM\equiv \BOM_l$ % and  $\BOM_{-n}\equiv (\Bf^n)^*\BOM$, 
for simplicity. 

  Let us apply the Covering Lemma to  the map $\Bf^n: (\Bf^n)^* \BOM \sm K_{s-n}\ra \BOM\sm K_s$.
The local degree of this map is at most $8$; the big degree is at most $8^N$. The buffer is given by the maximal immersed annulus
in $\BOM\sm \KK$ around $K_s$; its width, $\WW_s \asymp  \WW_\loc$, is comparable with $\WW ((\Bf^n)^* \BOM \sm K_{s-n})$, by the assumption. 
Hence
$$
      \WW(\BOM\sm K_s) \asymp \WW((\Bf^n)^*\BOM\sm K_{s-n} ) \asymp \WW_\loc. 
$$

% Let us show that this leads to  amplifications of the vertical weight. Indeed, 
Any vertical path in
the annulus $\BOM\sm K_s$ either crosses  a peripheral
Julia set $K_m$,  $m\in \pp \II$, 
or  overflows a vertical path in $\BOM\sm\KK$ beginning on some  non-peripheral Julia set $K_m$, $m\in \II^\circ$. 

In the former case, our path follows the horizontal  electric circuit
in $\BOM$. By  (\ref{T4}),       % property  (T4),
the width of this
family is bounded by $\frac C {(1-\kappa) N}\, \WW_\loc$ for $N$ big enough.  %   where $\eps(N)\to 0$ as $N\to 0$. 

The width of the latter  path family is bounded by 
$$
  \sum_{m\in \II^\circ} \WW^\ver (K_m|\, \BOM\sm \KK)  %  \sum_{m\in \II^\circ} \WW (K_m\ra \KK \cup \di U  |\, U \sm \KK) +O(N)  
                        = \sum_{m\in \II^\circ} \WW^\ver (K_m) +O(N),  
$$
where the last equality follows from  Corollary \ref{loc weights comparison} (ii).
% property (T2)
% and the definition of the non-peripheral vertices
% (saying that no horizontal canonical arcs connect $K_m$ to $K_n$,
where   $m\in \II^\circ$, $n\not\in  \II$).
% (here we use notation $\II$ of the union of the $K_n$ over $n\in \II$).  
Putting the above remarks together, we conclude that for some $n\in \II^\circ$, 
$$
   \WW^\ver (K_m) \geq \frac c {N} \WW_\loc.
$$
% with  $c>0$ depending only on $d$. 

Let us now recall that by the Translation Property we can do it for $L$ domains $\BOM_l$, $l\in \LL$, with disjoint $\KK$-slices. 
This gives as $L$  different Julia sets $K_{m_l}$,  $m_l\in \II_l $. Letting $L= 2\la N/c $,
we obtain
$$  
   \WW^\ver\geq \sum_{l\in \LL}  (\WW^\ver (K_{m_l})- 2) \geq   \la \WW_\loc. 
$$
\end{proof}

\bignote{Section ``Essential arcs'' is hidden below}

\comm{**********

\subsection{Essential arcs}

For $\xi\in (0,1)$, 
we say that a horizontal arc $\alpha\in \AAA$ is $\xi$-{\it essential} if $\WW(\alpha)\geq \xi \WW_\loc$. 
We let $\AAA^\ess$ be the weighted arc diagram comprised of all essential arcs
(we will often let $\xi$ be implicit in the terminology and notation). 

For a simple closed curve $\gamma\subset U\sm \KK$, 
let the {\it flux} $\WW^\perp (\gamma)$ be the the width of the
annulus associated with $\gamma$ that covers $U\sm \KK$.

\begin{lem}\label{definite flux}
  Assume $p$ is the smallest renormalization period of $f$. 
 Then  for any natural $1\leq d<p $ there exists $\si =\si (d)>0 $ such that for any Jordan curve $\gamma$ surrounding $d$ little Julia sets
we have $\WW^\perp (\gamma) \geq \si \cdot \WW_\loc$.
% wthere $\WW^\perp (\gamma)$ is the flux through $\gamma$. 
\end{lem}

\bignote{Note that ``$d$'' is reserved for the number of Julia sets in the
  fundamental sector.}

\begin{proof}
   Let us do induction in $d$. For $d=1$ we can let $\si=1/2$. Let us pass from $d-1$ to $d$. 
Of course, we can assume that $\si(1)\geq \si(2)\geq\dots \geq \si(d-1)$.

Let $\gamma$ surrounds $d$ little Julia sets $K_n$, $n\in \BB\subset \Z/p\Z$, and let $\De$ be the disc bounded by $\gamma$.

\msk\nin
{\em Claim:} There exists $\xi=\xi(d)$ such that if $\WW^\perp(\gamma)< \xi/2$ then the graph of $\xi$-essential arcs in $\De$
is connected. 

\msk Since $\si(d-1)$ is well defined  inductively, we can let 
$$
   \xi(d)=  \frac {\si(d-1)} {3(d+1)}. 
$$
Assume $\BB=\BB_1\sqcup \BB_2$, where $\BB_1\not=\emptyset$ and $\BB_2\not=\emptyset$ 
cannot be connected with $\xi$-essential arcs in $\De$. Since there are at most $3(d+1)$ 
arcs (vertical and horizontal) in $\De$, and the total weight eminated from $\BB_1$ is at least $\si(d-1)$, 
one of the arcs eminated from $\BB_1$ has weight  at least $\xi (d)$. By the assumption, this arc must be vertical, 
so $\WW^\perp(\gamma) \geq  \xi$ --  contradiction. 

\msk  

We let now 
\begin{equation}\label{si(d)}
  \si(d) =  2^{-(d+2)}\si(d-1) < \xi(d)/2. 
\end{equation}
Assume 
\begin{equation}\label{bound on the flux}
   \WW^\perp(\gamma)< \si (d). 
\end{equation}
Then by the Claim the graph of the $\xi$-essential arcs in $\De$ is connected. 

Select some little Julia set $K_n$ surrounded by $\gamma$, 
and let $\om := (f^p)^\# \gamma$  be the pullback of $\gamma$ based at $K_n$ (immersed into $U\sm \KK$).    
Since $\deg(f^p: \om\ra \gamma) \leq 2^d$, we have  the usual estimate
$$
    \WW^\perp(\om) \leq 2^d \WW^\perp(\gamma) < \si (d-1).
$$
By the induction assumption, $\om$ cannot surround less than  $d$ little Julia sets, 
so it must suround all the $K_n$, $n\in \BB$ (and of course, nothing else). 

Moreover, $\om$ cannot cross any of the $\xi$-essintial arcs in $\De$, for otherwise
$\WW^\perp(\om) \geq \xi$ contradicting (\ref{si(d)}) and (\ref{bound on the flux}). 
Since the graph of these arcs is connected,
$\om$ is homotopic to $\gamma$ in  $U\sm \KK$. But then $f$ is renormalizable with intermediate period
$p/d \in (1,  p) $.  
\end{proof}

% Select some $\xi= \xi(d) > 0$, to be specified below. 
% We say that a horizontal arc $\alpha\in \AAA$ is {\it essential} if $\WW(\alpha)\geq \xi \WW_\loc$. 

\begin{lem}\label{ess chains}
There exist $\xi= \xi(d)$ and  $C=C(d)$ such that for any $m$ with $|m|< N-l $ we have the following property:

\ssk \nin {\rm (i)} Either there is a vertical arc $\alpha$ emanating from $\De_m$  with $\WW(\alpha)> \xi$, 

\ssk \nin {\rm (ii)} Or else,   any $K_n \in \BB_m$  can be connected to 
each  neighboring block $\BB_{m+i}$, $i\in \{\pm 1\}$,
by a path of length $\leq C$ of $\xi$-essential arcs confined to 
$$
         \bigcup_{ |n-m|\leq C} \De_n. 
$$  
\end{lem}

\begin{proof}
Taking $\xi < \si(d)/3(d+1) $, we ensure that there an arc $\alpha$ eminated from $\De_m$ with $\WW(\alpha)> \xi$. 
If this arc is vertical then option (i) holds. Otherwise each $\De_m$ can be connected to a neigboring block, 
$\BB_{m+1}$ or $\BB_{m-1}$, with an essential arc.   % \note{remark on periodicity}

Let us orient the arcs $\alpha\in \AAA^\ess$ so that each $\alpha$ connects $\BB_m$ to $\BB_{m+i}$ with $i\geq 0$. 
  Let us push the $\BB_m$ and $\AAA^\ess$ to the cylinder $\Cyl$; 
we denote the corresponding vertices and arcs downstairs by $\bar K_n$ and  $\bar \alpha$. 
Let us consider the recurrent core $\bar \RR$ of $\bar \BB$ consisting of the vertices $\bar K_n$  that can be included in a loop
of $\bar\AAA^\ess$. Restriction of $\bar \AAA^\ess$ to $\bar\RR$ is ``connected with orientation'': any vertex can be connected to any other
by an orinented path of length bounded in terms of  $d$.
%It is comprised of blocks $\bar \RR_i$ which are cyclically permuted by the graph.  
Hence for any $K_n$ and $K_s$ in $\RR_m\subset \BB_m$ (the lift of
$\bar \RR$ to $\BB_m$)
\note{connectivity issue}
there exists $i\geq 0$ (bounded in terms of $d$) such that 
 $K_n$ can be connected to $K_{s+i}$ in $\AAA^\ess$. 
Let us show that $i>0$ for some of these $i's$. Otherwise, the  connected component of $\RR_m$ (disregarding the orientation)
would be a finite isolated block in $\AAA^\ess$ whose size can be bounded {\it a priori } by some  $d'$ depending only on $d$.
This is impossible provided $\xi< \si(d')/ 3(d'+1)$.  

Since for any $K_t\in \RR_m$,  $K_{s+i}$ can be further connected  to $K_{t+j}$ with some (controlled) $j\geq i$, we see that 
 for any $K_n$ and $K_t$ in $\RR_m$ there exists $i > 0$ (bounded in terms of $d$) such that 
 $K_n$ can be connected to $K_{t+i}$ in $\AAA^\ess$.  Reversing orientation, we obtain the same property with $i<0$. 

Finally, if $\bar K_n\not \in \bar \RR$ then it can be connected to some $\bar K_s\in \bar \RR$ 
by a (disoriented) path of $\bar \AAA^\ess$ of length bounded in terms of $d$. The conclusion follows. 
\end{proof}

******* } 

\bignote{Section ``Loosing Julia sets'' is hidden below}

\comm{****
\subsection{Loosing Julia sets}

Let us say that a set $K_m$, $m\in \II_l$, {\it gets lost} under the $n$-th pullback
if $K_{m-n}\not\subset (f^n)^* \Om_l$. Note that then $K_{m+k}$ gets lost under any further pullback $(f^k)^*$, $k\geq n$. 

\begin{lem}\label{getting lost}
Assume that for each  set $K_l$, $l\in \LL$,  the following property holds:
There exists a vertex $K_j$  of the graph $\AAA|\, \II_l$ on distance
at most $\kappa N$ from $K_l$, 
 which gets lost under $n$-th pullback,   $n \leq  3 p$. Then the Amplification Thereom is valid. 
\end{lem}
 
\begin{proof}
Our assumption implies that  $K_j$ gets lost under the $3p$-th pullback, thus 
\begin{equation}\label{pulled-off J-set}
   K_j \not\subset (f^{3p} )^*\Om , \quad \Om \equiv \Om_l.
\end{equation}    \note{regular $\Om$}
By Lemma \ref{ess chains}, $K_j$ can be connected to $K_l$ by a chain of essential arcs confined to 
\begin{equation}\label{cup BB+m}
   \bigsqcup_{|m| \leq  \kappa  N+C} \BB_m.
\end{equation}
%  Without loss of generality,  
% we can assume that  $K_j$ is the closest to $K_l$ vertex in this chain with property  (\ref{pulled-off J-set}).   
Then there exists $K_s\subset  (f^{3p} )^*\Om $ and 
$K_t\not\subset (f^{3p} )^*\Om $, $|s|, |t|\leq \kappa N+C$,
connected by an essential arc $\alpha$.
But then $\alpha$ restricts to a vertical arc in  $ (\Bf^p)^*\BOM $ of definite weight that begins on $K_s$.
By Lemma \ref{def vert does it}, this implies  the Amplification Thereom.  
\end{proof}

\begin{rem}
  We see, in partucular, that assuming  the contrary to the
  Amplification Theorem, 
the pullback $(f^n)^*\Om$ is independent of the base Julia  set $K_l$. 
\end{rem}

% Let $\{K_n\}_{n\in \II}$ be the family of little Julia sets contained in $\Om_0$,
% $\II\subset \Z/p\Z$. 

\begin{lem}\label{interesting moments}
Assume that for each $l\in \LL$ there exists a pullback $(f^n)^* \Om_l\not\supset K_1$, $n\leq 3p$,   
containing a horizontal arc $\alpha$ with endpoints on distance at
most $\kappa N$ from  $K_{l-n}$
such that  both  lifts  $f^* \alpha$  are pulled off.
Then  the concluison of the  Amplification Theorem  holds. 
\end{lem}
 
\begin{proof}
Take some $l\in \LL$, and let $\Om \equiv \Om_l$. 

Let the arc $\alpha$ lands on $K_i$ and $K_j$. By Lemma \ref{getting lost}, if one of the  Julia sets,  
 $K_{i-1}$ or $K_{j-1}$, gets lost then the concluison of the Amplification Theorem  holds. 
So, assume both $K_{i-1}$ and $K_{j-1}$  are contained in $( f^{n+1})^*\, \Om$. Let $\beta= f^* \alpha$ be the lift of $\alpha$ that begins on $K_{i-1}$. 
If it pulls off then  it ends on $K_{j-1}'$. Hence $K_{j-1}'\subset ( f^{n+1})^*  \Om$. 

Since  $( f^n)^* \Om $ does not contain $K_1$, 
the map $f: ( f^{n+1})^*\, \Om \ra ( f^n)^*\,  \Om $ is univalent. 
It follows that $K_{j-1}\not\subset ( f^{n+1})^*\,  \Om $, so this little Julia set gets lost.
The concluison follows. 
\end{proof}

********************************} 

\bignote{\S ``Good fit'' is hidden below}

\comm{*****
  
\subsection{Good  fit}

We say that a horizontal arc $\alpha$ {\it fits into a domain}
$(f^n)^* \Om$,  
$$ 
     \alpha\hsubset (f^n)^* \Om,
$$ 
 if it is contained in it up to homotopy in $U\sm \KK$ rel $\KK$.  
 If $\alpha\hsubset (f^n)^* (\Om[\kappa]) $,
we say that it  {\em fits $\kappa$-well  into } $(f^n)^* \Om$.

\bignote{Make the terminology and notations consistent!} 

\bignote{For one eye, we have an {\em amazing} fit! (except for a boun
ded number of pullbacks)}

\begin{lem}\label{fitting of  arcs}
   If an arc $\alpha$ fits $\kappa$-well into some domain
   $(f^n)^*\Om$,  
then it fits $(1-\kappa)$-well into all domains
$(f^{n+j})^*\Om$ for  $j=0,1,\dots,  (1-2\kappa) N $.
\end{lem}

\begin{proof}
      We have: 
$$
          (f^j)^* (\Om[1-\kappa] ) \hsupset \Om [\kappa] , \quad
          0\leq j  \leq      (1-2\kappa)  N. 
$$
Hence 
$$
  (f^{n+j})^* ( \Om [1-\kappa])  \hsupset (f^n)^*(\Om[\kappa]   )
  \hsupset \alpha  ,\quad j=0,1,\dots, (1-2\kappa)N. 
$$ 
\end{proof}

For $\xi\in (0,1)$, 
we say that a horizontal arc $\alpha\in \AAA$ is $\xi$-{\it essential} if $\WW(\alpha)\geq \xi \WW_\loc$. 
We let $\AAA^\ess$ be the weighted arc diagram comprised of all essential arcs
(we will often let $\xi$ be implicit in the terminology and
notation; just  an  apriori selected small number).
              \note{quantify  better?} 

\bignote{This def was transferred from \S ``Essential arcs''.}

\begin{lem}\label{fitting of ess arcs}
  Assume the conclusion of the Amplification Theorem fails.
  Let $\alpha$ be an essential horizontal arc.   
  Then there exists  $l\in \LL$  such that $\alpha$ fits $(1-\kappa)$-well into
   $ (1-2 \kappa) |\II|$ domains $(f^m)^*\Om$, $m< p$,
\end{lem}

\begin{proof}
  There are  at least $ (1-2d\kappa) |\JJ|$ domains  $(f^m)^*(\Om[1-\kappa])$,
  $m< p$, containing one endpoint of $\alpha$. If $\alpha$ does not
  fit into such a  domain, it creates a definite veritcal arc in it.
  If this happens for every $l\in \LL$ then
  by Lemma \ref{def vert does it},   the conclusion of the
  Amplification Theorem is valid.
\end{proof}

**************************}

\subsection{Confinement}

\note{ Move next to Lemma 2.2 ?} 

\begin{lem}\label{localization of local weight}
  Under the Translation Assumption, either the Amplification Theorem holds, or else 
  for any little Julia set $K_n$, there exists  $O(N)$  horizontal arcs $\alpha^n_i$ that begin on $K_n$ 
% and end on  $O(N)$ other little Julia sets $K_m$, 
with total weight
$$
    \sum_i \WW(\alpha^n_i) \geq (1-\eps) \WW_n, 
$$
where $\eps=\eps(N)= O(\sqrt{1/N})$. Moreover, these arcs are confined to the pullback $(\Bf^j)^* \BOM$
of some translation domain $\BOM$
% centered at $K_n$ 
(here $n+j\equiv l \ \mod p$ and $0\leq j< p$). 
\end{lem}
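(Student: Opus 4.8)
\emph{Step 1: reduce to short arcs out of $K_n$.} The plan is to assume that the conclusion of Theorem~\ref{amplification thm} fails, i.e.\ that $\WW^\ver<\la\WW_\loc$, and to deduce the asserted concentration; the thresholds $\WW_\loc\ge\underline{\WW}_\loc(N)$ and $p\ge\underline{p}(N,L)$ with $L=L(N,\la)$ will be chosen so as to feed the Translation Property, Corollary~\ref{loc weights comparison} and Lemma~\ref{def vert does it}. Since the Amplification Theorem fails, the second alternative of Theorem~\ref{short arcs} holds, and Corollary~\ref{loc weights comparison} gives $\WW^\Hub_\loc\asymp\WW^\KK_\loc$ and, for each $n\in[1,\kappa\qq]$, that a $(1-\eps_0)$-proportion of $\WW_n$ is carried by the two short arcs issuing from $K_n$, where $\eps_0$ may be taken as small as $1/p$ (hence negligible against $\sqrt{1/N}$, since $p\gg N$). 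Pulling these two short arcs back along the orbit $K_{n-1}\leftarrow K_n$, exactly as in the proof of Corollary~\ref{loc weights comparison}(ii), propagates the statement to all $K_n$ in the translation region; the $O(\bar\bb)=O(1)$ remaining little Julia sets (those in $\Sec_{\qq-1}\cup\Zec_{\qq-1}$, and $K_0$) follow directly from the combinatorial scheme E1--E3 for the arcs $\gamma_n,\gamma_0^\pm$ together with Lemma~\ref{comparison of dyn loc w}. It therefore suffices to show that the canonical arcs out of $K_n$ that are \emph{confined} to a suitable pull-back of a translation domain carry all but an $O(\sqrt{1/N})$-fraction of $\WW_n$ --- the point being that a ``short'' arc (one joining consecutive sets) may still wind out of such a domain.

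\emph{Step 2: the receiving domain.} Fix $K_n$. By the Translation Property choose a translation domain $\Om=\Om_l$ of size $2N+1$, $l\in\LL$, and let $j\in[0,p)$ be determined by $n+j\equiv l$ modulo $p$. Then $\BOM_j:=(\Bf^j)^*\BOM$ is centred at $K_n$; as $j<p$, its defining orbit meets the cusp at most once, so by Lemmas~\ref{bounded deg} and \ref{J-sets don't pull off} the domain $\BOM_j$ contains the little Julia sets $K_m$ with $|m-n|\le N'$, where $N'\ge N-\bar\bb$ (hence $N'\asymp N$), together with the chain of short arcs $\de_m$ joining consecutive ones. Since $\Bf^j$ is a branched covering of bounded local degree, the transformation rules (Lemmas~\ref{generaltransform rules} and \ref{transform rule for concatenation}) carry the properties (P1), (P2) and the series estimate~(\ref{T4}) from $\BOM$ to $\BOM_j$: thus $\BOM_j$ is again a translation domain centred at $K_n$, whose short arcs $\de_m$ form an electric circuit with all edge conductances $\asymp\WW_\loc$.

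\emph{Step 3: splitting off the confined weight.} Among the canonical arcs out of $K_n$, those confined to $\BOM_j$ target the $O(N)$ sets $K_m$, $|m-n|\le N'$ (two of them being $\de_{n-1},\de_n$, which carry essentially all of the weight), so there are $O(N)$ of them; let $w^{\mathrm{conf}}_n$ denote their total weight and put $w^{\mathrm{esc}}_n=\WW_n-w^{\mathrm{conf}}_n$. An arc contributing to $w^{\mathrm{esc}}_n$ must cross $\di\BOM_j$, hence either reaches a lateral wall $\de_{n\pm N'}$ or leaves $\BOM_j$ through its outer boundary. The weight of the first kind is routed through the short-arc circuit from the central node $K_n$ to the peripheral nodes $\pp\II$, so by the Series Law and~(\ref{T4}) it is at most $(C/N)\WW_\loc$. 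For the second kind: if its total were $\ge\eps\WW_\loc$, then iterating the restriction and tracking this escaping weight by the newborn-vertical-weight mechanism of Lemmas~\ref{def vert} and \ref{def vert does it} inside $\BOM_j$ --- each restriction pulls off a portion of it and converts that portion into vertical weight, and one may perform $\asymp N$ such restrictions since the escaping arcs reach combinatorial depth $\asymp N$ while $3p\gg N$ --- would accumulate a vertical weight of order $\eps^{2}N\,\WW_\loc$ (the Cauchy--Schwarz balancing of the per-step loss against the number of steps, precisely as in the Poincar\'e-return-map estimate $\mu_n(X)\le\sqrt{Cp\,\WW_\loc\cdot\locflux}$), contradicting $\WW^\ver<\la\WW_\loc$ once $\eps^{2}N$ exceeds $C\la$. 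Combining the two bounds with $\WW_n\asymp\WW_\loc$ (Lemma~\ref{comparison of dyn loc w}) gives $w^{\mathrm{esc}}_n=\big(O(1/N)+O(\sqrt{1/N})\big)\WW_n=O(\sqrt{1/N})\,\WW_n$, so the $O(N)$ confined arcs carry $(1-\eps)\WW_n$ with $\eps=O(\sqrt{1/N})$; the full strength of the Translation Property (the $L=L(N,\la)$ disjoint domains $\BOM_l$) is used only at this last stage, as in Lemma~\ref{def vert does it}, to turn the accumulated vertical weight into a genuinely $\la$-sized one.

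\emph{The main obstacle} is the second half of Step~3: showing that the escaping horizontal weight is genuinely converted into \emph{surviving} vertical weight --- not merely recycled into new horizontal arcs --- under $\asymp N$ successive restrictions, with a per-restriction loss of the correct (quadratic-in-$\eps$) order. This is where the failure of the Amplification Theorem, the Covering-Lemma machinery underlying Lemma~\ref{def vert does it}, and the wandering/Cauchy--Schwarz estimate have to be woven together, and it is what pins the error term at $O(\sqrt{1/N})$ rather than the naive $O(1/N)$ coming from the series bound alone.
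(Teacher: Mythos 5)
There is a genuine gap, and it sits exactly where you flag ``the main obstacle.'' The paper's proof never tracks escaping weight through $\asymp N$ iterated restrictions; it argues downstairs and pulls back \emph{once}. Assuming amplification fails, among the $L$ disjoint translation domains there must be one $\BOM=\BOM_l$ with $\WW^\ver(\II^\circ|\,\BOM)\le \WW_\loc/N$ (otherwise summing the vertical contributions of $L>2N\la$ disjoint domains already yields $\WW^\ver\ge\la\WW_\loc$). Combined with the series estimate (\ref{T4}) this bounds the width of the whole annulus downstairs, $\WW(\BOM\sm K_l)\le (C/N)\,\WW_\loc$. Then the Covering Lemma (the Kahn--Lyubich quasi-additivity law), applied to $\Bf^{j}:(\Bf^{j})^*\BOM\sm K_n\to \BOM\sm K_l$ with bounded local degree and the buffer annulus of width $\asymp\WW_\loc$ around $K_l$, gives
$$
\WW\bigl((\Bf^{j})^*\BOM\sm K_n\bigr)\;\le\;\sqrt{4\,\WW_\loc\cdot\WW(\BOM\sm K_l)}\;\le\;\frac{C}{\sqrt N}\,\WW_\loc .
$$
This single geometric-mean bound dominates \emph{all} of the weight emanating from $K_n$ on arcs that leave the pullback domain --- no splitting into ``lateral'' and ``outer'' escape, no iteration --- and it is the sole source of the exponent $1/2$ in $\eps=O(\sqrt{1/N})$; the count of $O(N)$ arcs is then immediate since the pullback domain contains $O(N)$ little Julia sets.

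Your substitutes for this step do not hold up as written. For the lateral escape you run (\ref{T4}) inside the pullback domain $\BOM_j$, asserting in Step~2 that (P1), (P2) and (\ref{T4}) ``carry over'' by the transformation rules; but $\Bf^{j}$ has global degree up to $2^{j}$, and Lemma \ref{generaltransform rules} only controls widths up to a factor of the degree, so nothing useful transfers upstairs without a buffered Covering-Lemma argument --- which is precisely what the paper invokes. For the outer escape, the claimed conversion of an $\eps\WW_\loc$ of escaping horizontal weight into $\asymp\eps^{2}N\,\WW_\loc$ of \emph{surviving} vertical weight over $\asymp N$ restrictions is never established: Lemmas \ref{def vert} and \ref{def vert does it} concern the pull-off of the canonical arc diagram and a definite vertical weight in a single pullback, not an additive accumulation across many restrictions, and the Cauchy--Schwarz analogy with the return-map estimate is not a proof. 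Since you yourself defer this point, the proposal proves the lemma only modulo exactly the estimate the paper obtains from the Covering Lemma; the preliminary reduction to short arcs in your Step~1, while harmless, is not needed for the paper's argument.
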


\begin{proof}
Let us consider $L$ translation domains $\BOM_l$, $l\in \LL$. Assume that the vertical weight in each of these domains
emanating from the non-peripheral part $\II_l^\circ$ is at least $\WW_\loc/N$.
Then by  Corollary \ref{loc weights comparison} (ii)  %property (T2)
the vertical weight in 
$U\sm \LL$ emanating from each $\II_l^\circ$ and supported on vertical
arcs is at least $ \WW_\loc/ {2N}$, 
%$\WW_\loc/N - O(N)\geq  \WW_\loc/ {2N}$,
as long as $\WW_\loc$ is sufficiently big (in terms of $N$). 
Thus, if $L > 2 N \la$, we achieve amplification. 

\bignote{The $\Om_l$ should be selected well inside the translation region!} 

So, let us assume that for some translation domain $\BOM\equiv \BOM_l$ we have
$$
  \WW^\ver(\II^\circ|\, \BOM) \leq \frac  {\WW_\loc}N.  
$$
Together with property \ref{T4} this implies:  
$$
   \WW(\BOM\sm K_l) \leq \frac CN \, \WW_\loc. 
$$
By the Covering Lemma,
$$
     \WW( (\Bf^j)^* \BOM  \sm K_n ) \leq  \sqrt{4\,  \WW_\loc \cdot
       \WW(\BOM\sm K_l )} \leq \frac C {\sqrt{N}}\,  \WW_\loc, 
$$
with a constant $C$ depending only on $d$.
Hence the rest of the local weight at $K_n$ is the horizontal weight confined to  $(\Bf^j)^* \BOM$. Since this domain contains 
$O(N)$ little Julia sets, this horizontal weight, up to an additive constant $O(N)$,  is localized on at most $O(N)$ horizontal arcs.
\end{proof}

\subsection{Proof of the Amplification Theorem. }
\comm{******
Assume the conclusion of the Amplification Theorem fails.  
If $L$ is big enough, then by Lemmas \ref{getting lost} and \ref{interesting moments},
for some $l\in \LL$ no little Julia set and no horizontal arc (of the type specified in the lemmas) 
get lost (respectively: pulled off) under $3p$ pullbacks of
$\BOM\equiv \BOM_l$;
and moreover, the fitting property of Lemma \ref{fitting of ess arcs} is also satisfied
for this $\Om$.  
Then ******* }
Assuming that  the conclusion of the Amplification Theorem fails,  
we will show that one of the domains $(\Bf^n)^*\, \BOM_l$, $ n\leq 3p$, contains a definite vertical weight 
(i.e., of order $N\cdot \WW_\loc$), and the Covering Lemma will complete the proof. 

% Since $l$ is fixed from now on, we will skip it from the notation. 

%Note that to this end,  we can replace $U$ with $U^p$
%(since  we can  push forward a definite vertical arc in $\BOM_{n+p}$ to an definite vertical arc in $\BOM_n$;
%just using that $\deg f^p\leq 2^N$, not the Covering Lemma). So, we  replace $U$ with $U^p$ without changing notation. 

By Lemma \ref{sym vert arcs} (adapted to our situation), the domain 
$V:= ( f^{l+p})^*\, \Om \supset K_0$ \note{!} 
contains two symmetric vertical curves
landing on $K_0$ and disjoint from $\AAA^\hor(V)$. 

Let $\AAA_l$ be the set of horizontal  arcs that begin on $K_l$.  
\note{Defined in \S \ref{assumptions sec}?} 
Then Lemma \ref{in p steps} ensures that all $\alpha\in \AAA_l$
get pulled off in less than $p$ iterates.  
Let $n=n(\alpha)$ be the pull-off moment, i.e.,
there exist a legitimate horizontal pullback $(f^k)^*\alpha $  for $k=0,1,\dots , n-1$, 
while  $(f^n)^* \alpha$ pulls off. 
By  Lemma  \ref{interesting moments},
% \ref{J-sets don't pull off}   and \ref{def vert does it}, 
 $n(\alpha) \in \II_l$. \note{Need subscript $l$?}

 Given an $\eps>0$, let $\xi=\eps/3d$. 
Let $\AAA_l^\ess$ be the set of $\xi$-essential arcs that begin on
$K_l$.  Their total weight is at least $(1-\eps)\WW_l$ . 

\bignote{Next line: These are the moments when $(f^n)^*\BOM \supset K_1$ !} 

Let $\nu_n$ be the total horizontal  weight in $(\Bf^{n-1})^*\, \BOM'$ that gets pulled off under $\Bf^*$, $n\in \II$,
and let $\eta_n$ be the part of $\WW_0$ that was used at that moment  for creation
of new horizontal arcs. %  compare (\ref{bound by W-1}).
Then the new horizontal weight that can be created in  $(\Bf^n)^*\, \BOM' $ is bounded by 
\begin{equation}\label{bound on nu}
    \frac 14 (2\nu_n  + \eta_n) +O(N). 
\end{equation}
Hence the loss of the horizontal weight as we pass from   $(\Bf^{n-1})^*\, \BOM'$ to  $(\Bf^n)^*\, \BOM'$ is at least
\begin{equation}\label{loss at one step}
    \nu_n  -  \frac 14 (2\nu_n +  \eta_n ) - O(N) = \frac 12 \nu_n - \frac 14 \eta_n  - O(N). 
  \end{equation}\label{nu-prime}

  Let us represent $\eta_n$ as $\sum \eta(\alpha)$
 over the horizontal arcs $\alpha\in (f^{n-1})^* \BOM'  $ that gets
pulled off under $f^*$,
where $\eta (\alpha)$ is the part of $\WW_0$ used for creation of
new horizontal arcs connecting $K_0$ to $f^*\alpha$.
Then we have:
\begin{equation}\label{penalty} 
  \sum \eta_n  = \sum_{n\in \II}  \sum_{\alpha\in (f^{n-1})^*\BOM'}  \eta (\alpha) =
  \sum_\alpha\sum_n  \eta (\alpha) \leq  (2N+1) \sum_\alpha\eta(\alpha) 
  \leq (2N+1) \WW_1. 
\end{equation}  

On the other hand, let us now show that
\begin{equation}\label{bounds on sums}
    \sum_{n \in \II_l} \nu_n  \geq \frac {  2N+1  }2 \WW_0 (1-\eps).
%   \sum \nu_n  \geq \frac { (1- 2 \kappa)\,  | \II | }2 \WW_l (1-\eps)
% \quad \sum \si_n \leq {|\JJ|}\, \WW_1,
\end{equation}
\comm{****  since:

\ssk\nin (i)
 All horizontal arcs of $\AAA_l $ get pulled off at moments $n\in\II$;

\ssk\nin (ii)
The total weight of these arcs is at least $\WW_l (1-\eps)$;

\ssk\nin (iii)
Each of the arcs $(f^{  n(\alpha) -1  })^* \alpha$, where $\alpha\in \AAA_l^\ess$,
fits well  into $(1-2 \kappa)\,  |\II|$ domains
$(f^{m-1})^* \Om$ (Lemma \ref{fitting of ess  arcs});
% since the blocks $\BB_m$ with $|m|\leq (1-\kappa) N$  do not get lost
% in  the pullback process;

\ssk\nin (iv)
Each of these arcs has two ends: this yields  $1/2$;

\ssk\nin (v)
  At the moments $n\not\in \II$,  we do not gain any  horizontal weight.   

\bignote{More details !} 

*********} 
Let $K_m$, $m\in \II$,  be the family  of little Julia sets in $V$
(introduced in the beginning of the proof) 
and   let $\AAA_0$ be the set of horizontal arcs in $\BV$  that begin on
$K_0$ and fit into $ V$.  
 By Lemma  \ref{interesting moments},
 for any  horizontal arc $\beta \in \AAA_0$,  
the  pulled-off  moment $n(\beta) $ belongs to $\II$. 
Since the correspondence $\beta\mapsto (f^{n(\beta)-1} )^*\beta$
is at most  two-to-one, the total weight of the arcs  
$(f^{n(\beta)-1} )^*\beta$, $\beta\in \AAA_0$,  is at least $\WW_0/2$. 
Moreover,
% by Lemma \ref{fitting of arcs},  
each of the arcs $(f^{n(\beta)-1} )^*\beta$ fits into one of the $2N+1$ \note{explain}
domains $ (f^{m-1})^* \Om ' $, $m \in \II_0$,   \note{ $\Om'= (f^p)^* (\Om)$ } 
implying (\ref{bounds on sums}).

\bignote{In fact
  all the events in question happen within a bounded range of moments.
  Indeed, the loss of the short arc must happen at the Julia
  set $K_i$, $-\bb\leq i <0$  (before the short arc enters the
  translation region). At this moment one can form some arcs from $K_0$
  to $K_i$. These arcs must be lost at some $K_i$, $i\geq -2\bb$, etc.
  Hence the essential weight emanating from $K_0$ must be formed in bounded
  number of steps. }

Summing (\ref{loss at one step} ) up over $n\in \II$, taking into
account  (\ref{penalty}) and  (\ref{bounds on sums}), 
we see that the total horizontal loss is at least
\begin{equation}\label{loss of ess weight}
   \frac {2N+1}  4 [  (1-\eps)\WW_0 -\WW_1] - O(N^2 ).
%  \frac {|\II|} 4 [ (1- 2 \kappa) (1-\eps)\WW_l -\WW_1] - O(N^2 ).
% = \frac  {|\JJ|} 4  (\WW_0 -\WW_1 - O(N))
\end{equation}
So, if $\WW_0$ is definitely bigger than $\WW_1$, we are done. 

\comm{*******
\msk
Assume this is not the case.
% (Then (\ref{loss of ess weight}) shows only  that the gain of horizontal is
% bounded by $ N \eps'  \WW_\loc$.)  
Assume also that  $\WW_0$ is definitely bigger than
$\WW_1$ (and hence is definitely bigger than $\WW_l$  as well):
$$
     \WW_0- \WW_l \geq 2\de \WW_\loc,
$$
with an absolute $\de>0$. %  (to be selected below). 

By Lemma \ref{localization of local weight}, \note{!!} 
the total weight of  $\AAA_0$
is at least $(1-\eps)\WW_0$, where $\eps = O(1/\sqrt{\kappa N})$, so
it is definitely bigger than $\WW_l$.  Let $\tl \AAA_0$ be the family of those arcs
$\beta\in \AAA_0$ that are not  $f^l$-pullbacks of arcs $\alpha\in \AAA_l $.  
\bignote{It can be old arcs with a bigger weight -- correct counting (?) }
The
total weight of these arcs is at least 
$$
  (1-\eps)\WW_0 - \WW_l\geq \de \WW_\loc
$$ 
(if $\eps$ is small enough).
Each arc $\beta$ gets pulled off under $(f^n)^*$ at some moment $n(\beta)<p$.
In fact, $n(\beta)\leq p - \kappa \qq$ (if $\beta$ is essential),
for otherwise $(f^{n(\beta)-1} )^*\beta$
would be a short arc in the translation region (by Corollary \ref{loc weights comparison}),
which cannot be pulled off.
********}

\msk  
On the other hand, if $\WW_0$ is approximately equal to $\WW_1$, 
we replace estimate (\ref{bound on nu}) with 
$$
      (\WW_0-\WW_1) +  \nu_n/ 2 = \eps' \WW_\loc +\nu_n/2,
$$
which results in the  horizontal loss $\nu_n/2- \eps' \WW_\loc$.
Summing these up, taking (\ref{bounds on sums}) into account,
we produce, once again,  a total horizontal loss of order $N\WW_\loc$. 

% The rest of the argument is the same as in the bounded case for big
% enough $N$, 
% provided  $\kappa=\kappa(d)$ and $\eps=\eps(N)$  are sufficiently
% small while $\WW_\loc=\WW_\loc(N) $ is
%sufficiently big  (all independent of $p>>N$). 

% \input{length}

\section{Local connectivity}

Our {\em a priori} bounds have an extra virtue of being {\em unbranched}:
the fundamental annuli with definite moduli do not intersect any non-central little
Julia sets (of the top level). 
Local connetivity of the Julia sets under considertation now follow from the
result of Hu and Jiang \cite{HJ,J}
(see also \cite{McM} and \cite[\S 9.2] {puzzle}).
MLC at the corresponding parameter values follows from the machinery developed in
\cite{puzzle} and existence of little Mandelbrot copies on
Ecall\'e-Voronin cylinders, see \cite{C,DLP}
(making use of the theory of geometric limits for parabolic bifurcations \cite{D,La}). 
%
%(see in particular, \S 7.5 that allows one to pass from one renormalization level
%to the next without any losses) (see also \cite{Ch}) 
%and the existence of little Mandelbrot sets on the Ecall\'e-Voronin
%cylinders \cite{Ch,DLP} (which allows one to start the procedure).

Let us outline the MLC argument  in more detail.
To prove MLC, we need to show that two topologically conjugate maps $f$ and $\tl f$
in question
% with the same combinatorics
are qc equivalent.  To this end, it is enough to construct a qc map
which is homotopic to the conjugacy rel postcritical set (``qc Thurston equivalence'').
Here are the main steps of the construction:

\ssk {\em Step 0.}
Start with a conjugacy $h_0$ between $f$ and $\tl f$
which is $K_0$-qc outside the Julia sets,
 with $K_0$ depending only on {\em a priori} bounds,
 
\ssk {\em Step 1.}
Adjust it to a $K$-qc  map $h$, with $K$ depending only on
{\em a priori} bounds, which coincides with the standard conjugacy on the
configuration of $\alpha$-rays. It can be done making use of holomorphic motions
of the ray configurations over neighborhoods of little Mandelbrot copies.
To have a uniform dilatation control, we need to see llittle Mandelbrot copies
on the Ecall\'e- Voronin cyclinders, see \cite{C,DLP}.%
\footnote{ (added in Jan 2026) The project \cite{DLP} was further developed
jointly with Alex Kapiamba,
to have a result which is exactly suitable for our needs (to appear).}

\ssk {\em Step 2.} We then use Yoccoz puzzle to construct a $K'$-qc map
$h' : \C\ra \C$ (with $K'$ depending only on {\em a priori } bounds but slightly worse than
$K_0$) in the right homotopy class which is equivariant of the little
Julia sets of top level
(see \cite[\S 11]{puzzle}).

\ssk {\em Step 3.} We adjust $h'$ in exterior neighborhoods of the little Julia sets so that it acquires the initial 
delatation $K_0$ over there  (see \cite[\S 7.5] {puzzle}), producing a new Thurston map $h_1$.
It can be used to start the construction over for renormalizations near little Julia sets
without any loss of dilatation.

\end{document}